 \newtheorem{teor}{Theorem}[section]
 \newtheorem{prop}[teor]{Proposition}
 \theoremstyle{definition}
 \newtheorem{defin}[teor]{Definition}
 \theoremstyle{remark}
\numberwithin{equation}{section}
\begin{document}

\title[Transcritical Bifurcations and Algebraic Aspects of Quadratic Multiparametric Fam.]{Transcritical Bifurcations and Algebraic Aspects of Quadratic Multiparametric Families}

  \author[J. Rodriguez-Contreras]{Jorge Rodr\'iguez Contreras}
 \address[J. Rodriguez Contreras]{Departamento de matem\'aticas y estad\'istica Universidad del Norte \& Facultad de Ciencias B\'asicas Universidad del Atl\'antico, Barranquilla - Colombia-ORCID ID: https://orcid.org/0000-0002-1953-5350}
 \email{jrodri@uninorte.edu.co  \& jorgelrodriguezc@mail.uniatlantico.edu.co}
 
 \author[A. Reyes Linero]{Alberto Reyes Linero}
 \address[A. Reyes Linero]{Facultad de Ciencias B\'asicas Universidad del Atl\'antico, Barranquilla - Colombia- ORCID ID: https://orcid.org/0000-0002-9024-5006}
 \email{areyeslinero@mail.uniatlantico.edu.co}
 
  \author[B. Blanco Montes]{Bladimir Blanco Montes}
 \address[B. Blanco Montes]{Facultad de Ciencias B\'asicas Universidad del Atl\'antico, Barranquilla - Colombia
}
 \email{bablanco@mail.uniatlantico.edu.co}

 \author[P. Acosta-Humanez]{Primitivo B. Acosta-Hum\'anez}
 \address[P. Acosta-Humanez]{Facultad de Ciencias B\'asicas y Biom\'edicas, Universidad Sim\'on Bol\'ivar, Barranquilla - Colombia \& Instituto Superior de Formaci\'on Docente Salom\'e Ure\~na, Recinto Emilio Prud'Homme, Santiago - Dominican Republic - ORCID ID: https://orcid.org/0000-0002-5627-4188}
 \email{primitivo.acosta-humanez@isfodosu.edu.do}
 
 \maketitle

 \begin{abstract}
This article reveals an analysis of the quadratic systems that hold multiparametric families therefore, in the first instance the quadratic systems are identified and classified in order to facilitate their study and then the stability of the critical points in the finite plane, its bifurcations, stable manifold and lastly, the stability of the critical points in the infinite plane, afterwards the phase portraits resulting from the analysis of these families are graphed. To properly perform this study it was necessary to use some results of the non-linear systems theory, for this reason vital definitions and theorems were included because of their importance during the study of the multiparametric families. Algebraic aspects are also included.

 \noindent\footnotesize{\textbf{Keywords and Phrases}. \textit{Quadratic Polynomial Systems, Critical Points, Bifurcations, Stable Manifold, Phase portraits of polynomial systems.}}\\

 \end{abstract}

\section{Introduction}
Systems of differential equations are known to express a  number of mathematical, physical and engineering situations. In particular, this article is based about the study of all quadratic multiparametric subfamilies associated with the next family:
Given the family with $ a,b,c,m,k\in\mathbb{R} $.

\begin{equation}
\left\{ \begin{array}{lcl}
\dot{x}&=&y\\
\dot{y}&=&\left(\alpha x^{m+k-1}+\beta x^{m-k-1}\right)y-\gamma x^{2m-2k-1}\label{ecuacionprincipal} \\
\end{array}\right.
\end{equation}
 
We can find antecedents of the algebraic and qualitative studies of this family in \cite{1,2,2a,3}. Another algebraic and dynamical studies can be found in \cite{c1,c2}. In general, we can see qualitative studies about planar systems in \cite{6} ,\cite{4} and  \cite{5}, furthermore antecedents of  applied bifurcations study in \cite{VG}. In the present work, we take Proposition 4.1, pag 12, in \cite{2,2a}  which the goal of analyze each quadratic subfamily equivalently to (\ref{ecuacionprincipal}). Considering the constants $a$,$b$, $c$, and  $s,p,r\in\mathbb{Z}^{+}$. Then, we analyze different cases to determine quadratic systems attached to (\ref{ecuacionprincipal}) taking into account the regions in the space determined by the for the different parameters.\\

For the study of the quadratic multiparametric families described by (\ref{ecuacionprincipal}) where we use different topics studied in \cite{7},\cite{8},\cite{9} and \cite{10}. Then, we find the critical points associated with each quadratic family and analyzing their stability in both the finite and infinite planes, also we present a deeper study study determined by regions to see the changes in stability of the critical points and from here analyze bifurcations presented in some families. Finally, we used software like \cite{10} and \cite{11} for the detailed construction of the behaviors by means of the global phase portrait associated with each quadratic multiparametric family.

\section{Preliminaries}
In this section we provide the necessary theoretical background to understand the rest of the paper.\\

A planar polynomial system of degree $n$ is given by:
	\begin{equation}
		\begin{aligned}
		\dot{x}=\textit{P(x,y)} \\
		\dot{y}=\textit{Q(x,y)} \label{1}
		\end{aligned}
	\end{equation}		 
Where $\textit{P},\textit{Q}\in\mathbb{C}[x,y]$, and $n$ is given by $n=\max(\deg \textit{P},\deg \textit{Q})$

We denote the polynomial vector field associated to the system (\ref{1}) like: By $X:=(\textit{P},\textit{Q})$. The planar polynomial vector field $X$ can be also written in the form:
	\begin{center}
		$X=\textit{P}(x,y)\dfrac{\partial}{\partial x} + \textit{Q}(x,y)\dfrac{\partial}{\partial y}.$
	\end{center} 

A differential equations associate to polynomial vector field of the form (\ref{1}) is given by: 
	\begin{center}
		$\dfrac{dy}{dy}=\dfrac{\textit{Q(x,y)}}{\textit{P(x,y)}}.$
	\end{center}
	

\begin{teor} \textbf{Hyperbolic Singular Points Theorem}.\label{3} \\
\\
Let $(0,0)$ be an isolated singular point of the vector field $X$, given by,
\begin{equation}
\left\{
\begin{array}{lcl}
\dot{x}&=& ax+by+A(x,y) \\
\dot{y}&=& cx+dy+B(x,y)
\end{array}\right.
\end{equation}
Where $A$ and $B$ are analytic in a neighborhood of the origin with $A(0,0)=B(0,0)=DA(0,0)=DB(0,0)=0$. Let $\lambda_{1}$ and $\lambda_{2}$ be the eigenvalues of the linear part $DX(0,0)$ of the system at the origin. Then:\\
\begin{itemize}
	\item[(a)] If $\lambda_{1}$,$\lambda_{2}$ are real and $\lambda_{1}$$\lambda_{2}$ $< 0$, then $(0,0)$ is a saddle, where separatrix call $(0,0)$ in the directions given by the eigenvectors associated with $\lambda_{1}$ and $\lambda_{2}$.\\
	\item[(b)] If $\lambda_{1}$,$\lambda_{2}$ are real and $\lambda_{1}$$\lambda_{2}$ $>$ $0$, then $(0,0)$ is a
	node. If $\lambda_{1}$ $>0$($\lambda_{1}$$<$ $0$) then it is repelling or unstable (respectively attracting or stable).\\
	\item[(c)] If $\lambda_{1}=\alpha + \beta_{i}$ y $\lambda_{2}= \alpha - \beta_{i}$ with $\alpha,\beta \neq 0 $ then $(0,0)$ is a focus. If $\alpha>0$ or $(\alpha<0)$ it is repelling or unstable
	(respectively attracting or stable).\\
	\item[(d)] If $\lambda_{1}$ = $\beta_{i}$ and $\lambda_{2}$ =-$\beta_{i}$, then $(0,0)$ is a linear center, focus or a center.
\end{itemize}
\end{teor}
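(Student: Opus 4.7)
The plan is to reduce to the linear part by invoking the Hartman--Grobman theorem, which asserts that near a hyperbolic equilibrium the flow of the nonlinear system is topologically conjugate to the flow of its linearization $\dot{u}=DX(0,0)\,u$. This conjugacy transfers the qualitative classification (saddle/node/focus) from the linear system to the original one; only the refined geometric information (directions of separatrices, tangencies of invariant manifolds) requires extra argument from the stable manifold theorem. I would first put $DX(0,0)$ in real Jordan normal form by a linear change of variables, so the local analysis splits cleanly into the four eigenvalue cases listed in the statement.

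For case (a), with $\lambda_1\lambda_2<0$, the linear system is a hyperbolic saddle with one-dimensional stable and unstable subspaces spanned by the eigenvectors of $\lambda_1$ and $\lambda_2$. Applying the stable/unstable manifold theorem produces local invariant curves $W^s_{\mathrm{loc}}$, $W^u_{\mathrm{loc}}$ tangent at the origin to those eigenspaces; these are the separatrices, and Hartman--Grobman gives the saddle phase portrait. For case (b), with real eigenvalues of the same sign, the linearization is a node: all orbits approach $0$ as $t\to-\mathrm{sgn}(\lambda_i)\,\infty$, so topological conjugacy yields an attracting node when both eigenvalues are negative and a repelling one when they are positive. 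For case (c), with $\lambda_{1,2}=\alpha\pm\beta i$, $\alpha\neq 0$, the linear Jordan form is a rotation-scaling matrix; trajectories of the linearization spiral into (or out of) the origin according to the sign of $\alpha$, and again Hartman--Grobman yields a focus of the same type for the full nonlinear system.

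The main obstacle is case (d), the non-hyperbolic situation $\lambda_{1,2}=\pm\beta i$. Here Hartman--Grobman does not apply because the origin fails to be hyperbolic, and the linear system is a genuine center whose closed orbits are structurally unstable: arbitrarily small higher-order perturbations $A,B$ can destroy them. The most one can prove using only the linear part is that $(0,0)$ is either a topological center, a weak focus (attracting or repelling), or a center--focus; distinguishing these requires examining the return map and computing Poincaré--Lyapunov (focal) constants from the Taylor expansions of $A$ and $B$. I would therefore phrase the argument for (d) as a trichotomy: pass to polar coordinates, note that $\dot r = O(r^2)$ and $\dot\theta = -\beta + O(r)$, so orbits near the origin rotate monotonically and a Poincaré section is well defined; the sign of the first nonvanishing Lyapunov quantity of the resulting return map then decides between focus and center, giving exactly the three alternatives claimed.

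Throughout, the routine parts are the linear-algebraic normalization of $DX(0,0)$ and the verification that $A,B$ with vanishing $1$-jet at $0$ satisfy the Lipschitz-type hypotheses needed to invoke Hartman--Grobman and the stable manifold theorem; the substantive work is the center--focus discussion in case (d).
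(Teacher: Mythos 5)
This theorem is stated in the paper as a quoted preliminary result with a pointer to Dumortier--Llibre--Art\'es \cite[p. 71]{9}; the paper gives no proof of its own, so there is nothing to compare step by step. Judged on its own terms, your outline follows the standard textbook route and is mostly sound, but it contains one genuine gap. The Hartman--Grobman theorem provides only a \emph{topological} conjugacy with the linearization, and under topological conjugacy a node and a focus are indistinguishable (both are topologically equivalent to $\dot u=-u$, $\dot v=-v$ in the stable case). So Hartman--Grobman cannot be the tool that separates case (b) from case (c): it certifies the saddle in (a) and the stability type in (b) and (c), but not whether orbits approach the origin with a limiting direction (node) or spiral (focus). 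To get that distinction you must use the hypothesis $DA(0,0)=DB(0,0)=0$ directly, e.g.\ pass to polar coordinates and show $\dot r=\alpha r+o(r)$, $\dot\theta=\beta+o(1)$ in the complex-eigenvalue case (so $\theta(t)$ is unbounded and the orbit genuinely spirals), and in the real-eigenvalue case show that $\theta(t)$ converges to an eigendirection; alternatively invoke a $C^1$-linearization theorem, which is strictly stronger than Hartman--Grobman and has its own hypotheses.

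Two smaller points. First, your treatment of case (d) is the right idea (monotone rotation near the origin, Poincar\'e return map, Lyapunov quantities), but since $A$ and $B$ are assumed analytic the ``center--focus'' alternative is excluded by the Poincar\'e--Lyapunov theory: for analytic systems the return map is analytic, so the origin is either a center or a focus, matching the statement's dichotomy. Second, in case (a) you correctly note that the separatrix directions come from the stable manifold theorem rather than from Hartman--Grobman; that part of your division of labor is accurate.
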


 for a more detailed study, please see	\cite[pág 71]{9}

\begin{teor}\rm \textbf{Non-Hyperbolic Singular Points Theorem}\\

Let $(0,0)$ be an isolated singular point of the vector field X given by: \label{5}
	\begin{equation}
	\left\{ \begin{array}{lcc}
	\dot{x}= y + A(x,y) \\
	\dot{y}= B(x,y) \\
	\end{array}\right.
	\end{equation}
Where $X$ and $Y$ are analytic in a neighborhood of the point $(0,0)$ y and considers the series expansion have expansions starting with terms of the second degree in $x$ and $y$. Let $y=f(x)=a_{2}x^{2}+a_{3}x^{3}+\ldots$ be the solution of the equation $y+A(x,y)=0$ in a neighborhood of the point $(0,0)$, and suppose you have the following series expansion of the function $F(x)=B(x,f(x))=ax^{m}(1+\ldots)$ y $G(x)=(\frac{\partial A}{\partial x} + \frac{\partial B}{\partial y})(x,f(x))=bx^{n}(1+\ldots)$ where $a\neq0$, $m\geq2$, and $n\geq1$. Then:\\
\begin{itemize}
	\item[(1)] If $G(x)\equiv0$ and $F(x)=ax^{m}\ldots$ for $m\in\mathbb{N}$ with $m\geq1$ and $a\neq0$, then:
	\subitem(\textit{i}) if $m$ is odd and $a>0$, then the origin of $X$ is a saddle and If $a<0$, then it is a center or a focus.
	\subitem(\textit{ii}) If $m$ is even then the origin of $X$ is a cusp.
	\item[(2)] If $F(x)=ax^{m}+\ldots$ and  $G(x)=bx^{n}+\ldots$ with $m\in\mathbb{N}$, $m\geq2$,
	$n\in\mathbb{N}$, $n\geq1$, $a\neq0$ and $b\neq0$. Then we have:
	\subitem(\textit{i}) If $m$ is even, and
	\subitem(\textit{i.a}) $m<2n+1$, then the origin of $X$ is a cusp.
	\subitem(\textit{i.b}) $m>2n+1$, then the origin of $X$ is a saddle-node.
	\subitem(\textit{ii}) If $m$ is odd and $a>0$, then the origin of $X$ is a saddle.
	\subitem(\textit{iii}) If $m$ is odd, $a<0$ and
	\subitem(\textit{iii.a}) $m<2n + 1$, or $m=2n+1$ and $b^{2}+4a(n+1)<0$, then the origin of $X$ is a center or a focus. 
	\subitem(\textit{iii.b}) $n$ is odd and either $m>2n+1$, or $m=2n+1$ and $b^{2}+4a(n+1)\geq0$. Then the phase portrait of the origin of $X$ consists of one hyperbolic and one elliptic sector.
	\subitem(\textit{iii.c}) $n$ is even and either $m>2n+1$, or $m=2n+1$ and $b^{2}+4a(n+1)\geq0$. Then the origin of $X$ is a node. The node is attracting if $b<0$ and repelling if $b>0$.
\end{itemize} 
\end{teor}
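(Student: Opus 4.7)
The plan is to reduce the two-dimensional analysis near this nilpotent singular point to a one-dimensional one by straightening the slow curve, and then to resolve the remaining degeneracy by a (quasi-homogeneous) blow-up. The theorem is classical and its full proof, with all blow-up and gluing details, is contained in \cite{9}; below I sketch the strategy and indicate where the genuine difficulties lie.

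Since $\partial_y(y+A(x,y))|_{(0,0)} = 1 \neq 0$, the implicit function theorem furnishes the analytic function $y = f(x) = a_2 x^2 + a_3 x^3 + \ldots$ solving $y + A(x,y) = 0$ used in the statement. I would then perform the analytic change of coordinates $u = x$, $v = y - f(x)$. Using the identity $f'(x)\bigl(1 + \partial_y A(x,f(x))\bigr) = -\partial_x A(x,f(x))$, obtained by implicit differentiation, a direct computation puts the system in the form
\begin{equation*}
\dot{u} \;=\; v\bigl(1 + \mathcal{O}(u,v)\bigr), \qquad \dot{v} \;=\; F(u) \,+\, G(u)\,v \,+\, \mathcal{O}(v^2),
\end{equation*}
with $F$ and $G$ exactly as defined in the statement. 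The case $G \equiv 0$ of item (1) then follows immediately: the system admits a formal first integral $H(u,v) = \tfrac{1}{2}v^2 - \int_0^u F(s)\,ds$, and inspection of its level sets at the origin distinguishes saddle (for $m$ odd, $a>0$), center or focus (for $m$ odd, $a<0$), and cusp (for $m$ even), giving part (1).

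For part (2) I would apply the quasi-homogeneous blow-up of weights $(1, n+1)$. The key balancing observation is cleanest via an invariant-curve ansatz: inserting $v = \gamma u^{n+1}$ into the truncated system $\dot{u}=v$, $\dot{v} = au^m + bu^n v$ forces $2(n+1)-1 = m = n+(n+1)$, which pins down the critical exponent $m = 2n+1$ and the quadratic $(n+1)\gamma^2 - b\gamma - a = 0$, whose discriminant is exactly $b^2 + 4a(n+1)$. Thus the trichotomy $m<2n+1$, $m>2n+1$, $m=2n+1$, and the appearance of that discriminant in the statement, are forced by the Newton polygon. After the blow-up, the singularities on the exceptional divisor are hyperbolic or semi-hyperbolic, so Theorem \ref{3} together with its semi-hyperbolic extension classifies each one; blowing back down and gluing the half-planes $\{u>0\}$ and $\{u<0\}$, one finds that the parity of $m$ controls whether the two tangential pictures agree or reflect (separating saddle from cusp, saddle-node from cusp, and so on), while the parity of $n$ controls the sign of the transverse divergence on the two sides and thereby decides between a node (case (iii.c)) and a hyperbolic--elliptic configuration (case (iii.b)).

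The main obstacle is the critical subcase $m = 2n+1$ with $a<0$ and $b^2+4a(n+1)<0$: the blown-up singularity then has nonzero purely imaginary eigenvalues, so the blow-up alone cannot distinguish center from focus, which is exactly why the statement of (iii.a) stops at \emph{center or focus}. A complete center--focus discrimination here would require the computation of Lyapunov quantities and is deliberately left outside the scope of the theorem. The only other delicate point is the orbit bookkeeping on the exceptional divisor that separates node from elliptic--hyperbolic behaviour in the real-discriminant regime; this is routine once the blow-up has been carried out carefully, following the standard treatment in \cite{9}.
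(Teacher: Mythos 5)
The paper does not prove this theorem at all: it is quoted verbatim as background (with the pointer ``see \cite{9}, p.~116''), so there is no in-paper argument to compare against. Your sketch faithfully reproduces the standard proof strategy of that reference --- the reduction $v=y-f(x)$ to the normal form $\dot u=v(1+\cdots)$, $\dot v=F(u)+G(u)v+\cdots$, followed by a quasi-homogeneous blow-up of weights $(1,n+1)$ whose invariant-curve ansatz produces exactly the discriminant $b^{2}+4a(n+1)$ --- and is consistent with it, though it is explicitly a sketch that defers the blow-up, gluing, and higher-order-term bookkeeping back to \cite{9}.
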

For a more detailed study, see \cite[pág 116]{9}

\subsection{Bifurcations}
We consider the system, depend on a parameter $\lambda$:

	\begin{equation}
		\dot{x}=f(x,\lambda) \label{bif} 
	\end{equation}	
	
If the change in $\lambda$ that produces a qualitative or topological change in the behavior of the planar system (\ref{bif}), his is called a Bifurcations.
This  can be a local bifurcation occurs when the change in the parameter causes a change in the stability of an equilibrium point. Global bifurcations normally occur in larger invariant sets of the system.\\ 

\textbf{Codimension - One Bifurcations}
These Bifurcations require the variation of a single parameter to occur in the system, all have a normal form, that is, a topologically equivalent system, either local or global to the initial system.\\

\textbf{Transcritical Bifurcations:}
A transcritical Bifurcations in critical point exists for every value of the parameter $\lambda$  but they exchange their stability with another critical point after the \textquotedblleft collision\textquotedblright  between them.\\ 

\textbf{Saddle-focus-saddle Bifurcations:}
\begin{defin} We Will call a Bifurcations saddle-focus-saddle is when a parameter change it implies that two critical points, one saddle, collapse in a focus and later they recover its original stability.
\end{defin}

For a more detailed study, see \cite[pág 51]{4} and \cite[pág 314]{10}

\subsection{Infinite Singular Points} \label{infinito}

Consider $\mathbb{R}^{3}$ the sphere \\  $S^{2}=\left\lbrace (x_{1}, x_{2}, x_{3})\in\mathbb{R}^{3}; x^{2}_{1}+x^{2}_{2}+ x^{2}_{3}=1\right\rbrace$ and the plane $\pi=\left\lbrace (x_{1}, x_{2}, x_{3})\in\mathbb{R}^{3}; x_{3}=1\right\rbrace$, is tangent to $S_{2}$ in the point $(0,0,1)$. Let $r$ a line through the origin $(0,0,0)$ and a point $P$ of $\pi$, then $r$ intercept $S^{2}$ in two points $P_{+}$ y $P_{-}$, where the first is in the upper open hemisphere $H_{+}=\left\lbrace (x_{1},x_{2},x_{3})\in S^{2}; x_{3}>0\right\rbrace $ and the second is in the lower open hemisphere $H_{-}=\left\lbrace (x_{1},x_{2},x_{3})\in S^{2}; x_{3}<0\right\rbrace$.\\
\\
The expression for $p(X)$ in local chart $(U_{1},\phi_{1})$ is given by:
\begin{equation}
\left\{ \begin{array}{lcl}
\dot{u}&=&v^{d}\left[ -uP(\frac{1}{v},\frac{u}{v}) + Q(\frac{1}{v},\frac{u}{v}) \right],\\
\dot{v}&=&-v^{d+1}P(\frac{1}{v},\frac{u}{v}).
\end{array}\right. \label{inf1}
\end{equation}
The expression for $(U_{2},\phi_{2})$ is:
\begin{equation}
\left\{ \begin{array}{lcl}
\dot{u}&=&v^{d}\left[ P(\frac{u}{v},\frac{1}{v}) - uQ(\frac{u}{v},\frac{1}{v}) \right],\\
\dot{v}&=&-v^{d+1}Q(\frac{u}{v},\frac{1}{v}).
\end{array}\right. \label{inf2}
\end{equation}
and for $(U_{3},\phi_{3})$ is:
\begin{equation}
\left\{ \begin{array}{lcl}
\dot{u}&=&P(u,v),\\
\dot{v}&=&Q(u,v). \label{inf3}
\end{array}\right.
\end{equation}
Where $d$ is the maximum degree of the polynomial.
For a more detailed study, see \cite[pág 151]{7}\\

\subsection{Algebraic Methods}
Concerning algebraic aspects considered in this paper, we follow the references \cite{12,13,14,15,16,17,18,19,20,21,22,23,24,25,26} and also \cite{1,2,2a,3,c1,c2}. 

A differential field $K$ is a field equiped with a derivation $\partial$ such that $\forall a,b\in K$ it satisfied:
\begin{enumerate}
    \item $\partial(a+b)=\partial a+\partial b$
    \item $\partial (a\cdot b)=a\cdot \partial b+\partial a\cdot b$
    \item $\partial \left(\frac{a}{b}\right)=\frac{1}{b^2}(a\cdot \partial b-\partial a\cdot b)$
\end{enumerate}
The field of constants of $K$, denoted by $C_K$, is given by $$C_K=\{c\in K:\,\, \partial (c)=0\}$$
The Picard-Vessiot extension $L/K$ is the extension of $K$ preserving the field of constants, that is $C_L=C_K$. Thus, given a system of first order linear differential equations $\dot X=AX$, where $a_{ij}\in K$, the differential Galois group of $\dot X=AX$, denoted by $DGal(L/K)$, is the group of $K$-differential automorphisms from $L$ to $L$, i.e., $\sigma:\,L\mapsto L$, $\partial(\sigma (a))=\sigma(\partial a)$,  $$DGal(L/K)=\{\sigma:\,\, \sigma(a)=a,\,  \forall a\in K\}$$
A Hamiltonian system of $n$ degrees of freedom $$\dot q_i=\frac{\partial H}{\partial p_i},\,\, \dot p_i=-\frac{\partial H}{\partial q_1}, \, 1\leq i\leq n,$$ where $\mathbf{q}=(q_1,\ldots,q_n)$, $\mathbf{p}=(p_1,\ldots,p_n)$ and $$H=\frac{\mathbf{p}\cdot \mathbf{p}}{2m}+V(\mathbf{q}),\,\, (\mathbf{q},\mathbf{p})\in \mathbb{R}^{2n},$$ is integrable in the Liouville sense whether there exist $n$ independent first integrals that commute in pairwise with the Poisson bracket. In particular, the Hamiltonian systems with one degree of freedom are integrable in the Liouville sense because $H$ is the first integral, i.e., $\dot H=0$. Morales-Ramis theory is the theory that relates differential Galois theory with the integrability of dynamical systems. In particular, Morales-Ramis Theorem for Hamiltonian systems says that \emph{if a Hamiltonian system is integrable, then the connected identity component of the differential Galois group of the first variational equation is an abelian group}.\\

On the other hand, explicit solutions for differential equation
\begin{equation}\label{lmp1}
    \frac{d^2x}{dt^2}=f(x)
\end{equation}
are related with the integral curve $(x,\dot x)$ of the one degree of freedom Hamiltonian system
\begin{equation}\label{lmp2}
\dot x=y,\,\, \dot y=f(x),\,\, H=\frac{y^2}{2}-\int_{x_0}^xf(\tau)d\tau
\end{equation}
We are interested in the families in where $f(x)$ is a polynomial of degree two, that is, family I, Family IV when $p=-4$ and Family V when $s=-4$. We recall that our problem comes from a polynomial vector field provided in \cite{2,2a}, for this reason $p,s\in \mathbb{Z}_0$ to get polynomial vector fields, while $p=-4$ and $s=-4$ correspond originally to a rational non-polynomial vector field, which is exceptionally considered for the algebraic aspects. 

Following \cite{27}, the Weierstrass $P$-function is an elliptic function that satisfy the elliptic curve
\begin{equation}\label{pwei0}
    y^2=4x^3-g_2x-g_3,\,\, x=\wp (t;g_2,g_3),\,\, y=\dot x.
\end{equation}
where
\begin{equation}\label{pwei1}
\wp (t;g_2,g_3)=\frac{1}{z^2}+\sum_\omega \left(\frac{1}{(t-\omega)^2}-\frac{1}{\omega^2}\right).    
\end{equation}
Morever, the Weierstrass $P$-function is a double periodic function with periods $2\omega_1$ and $2\omega_2$; and invariants $g_2$ and $g_3$ given by $$g_2=\sum_\omega \frac{60}{\omega^4},\quad g_3=\sum_\omega \frac{140}{\omega^6}.$$ The sums range over $\omega = 2n_1\omega_1 + 2n_2\omega_2$ such that $(n_1,n_2)\in\mathbb{Z}\times \mathbb{Z}\setminus \{(0,0)\}$.

\section{Conditions For The Problem.}

The following section allows us to identify the quadratic cases associated to (\ref{ecuacionprincipal}).

\subsection{Reduction to 5 Families}
The next proposition is a particular case of proposition 4.1 in \cite{2a}, we inly consider the quadratic cases.
\begin{prop} Let $a,b,c,m,k\in\mathbb{R}$ y $s,p,r\in\mathbb{Z}^{+}$. Quadratic systems associated with each subfamily of (\ref{ecuacionprincipal}) are equivalently to the following families:

		\begin{equation}{\textbf{I:}}
		\left\{
		\begin{array}{lcl}
		\dot{x}&=& y  \\
		\dot{y}&=& -cx^{2} \label{familia1}
		\end{array}\right.
		\end{equation}
		
		\begin{equation}{\textbf{II:}}
		\left\{
		\begin{array}{lcl}
		\dot{x}&=& y  \\
		\dot{y}&=& 2byx \label{familia2}
		\end{array}\right.
		\end{equation}
		
		\begin{equation}{\textbf{III:}}
		\left\{
		\begin{array}{lcl}
		\dot{x}&=& y  \\
		\dot{y}&=& 2ayx \label{familia3}
		\end{array}\right.
		\end{equation}
		
		\begin{equation}{\textbf{IV:}}
		\left\{
		\begin{array}{lcl}
		\dot{x}&=& y  \\
		\dot{y}&=& a\left( \frac{p+4}{2}\right)y-\frac{3}{2}a^{2}x-cx^{2} \label{familia4}
		\end{array}\right.
		\end{equation}
	
		\begin{equation}{\textbf{V:}}
		\left\{
		\begin{array}{lcl}
		\dot{x}&=& y  \\
		\dot{y}&=& b\left( \frac{s+4}{2}\right)y-\frac{3}{2}bx-cx^{2} \label{familia5}
		\end{array}\right.
		\end{equation}

\end{prop}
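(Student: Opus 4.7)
The plan is to impose on (\ref{ecuacionprincipal}) the conditions under which it is a polynomial system of degree exactly two, and then to enumerate every admissible combination of exponents. Writing $\ell_{1}=m+k-1$, $\ell_{2}=m-k-1$ and $\ell_{3}=2m-2k-1=2\ell_{2}+1$, the three monomials on the right-hand side are $\alpha x^{\ell_{1}}y$, $\beta x^{\ell_{2}}y$ and $-\gamma x^{\ell_{3}}$, with respective total degrees $\ell_{1}+1$, $\ell_{2}+1$ and $\ell_{3}$. The requirements of being polynomial and quadratic force each degree to lie in $\{0,1,2\}$, or else the corresponding coefficient must vanish. Equivalently, $(m+k,\,m-k)$ is restricted to a small discrete set, enlarged by the cases in which one or two of $\alpha,\beta,\gamma$ are zero.

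First I would run through the cases in which only one term survives. The choice $-\gamma x^{2}$ alone yields family \textbf{I} (taking $m-k=3/2$ and $\alpha=\beta=0$, with $\gamma=c$). The choice $\alpha xy$ alone yields families \textbf{II} or \textbf{III}, depending on whether the surviving coefficient is renamed $2b$ or $2a$; in either case $m+k=2$ and $\gamma=0$, with the other $y$-coefficient vanishing. Linear-only and constant-only cases are discarded because they fail to produce a genuinely quadratic vector field. This stage is clean and does not call on any auxiliary integer parameter.

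Next I would treat the cases in which all three coefficients are simultaneously nonzero. The key observation is that the arithmetic relation $\ell_{3}=2\ell_{2}+1$ forces $\ell_{2}=0$ as soon as we require $0\le \ell_{3}\le 2$; combined with $\ell_{1}\in\{0,1\}$ this leaves only two essentially different admissible exponent profiles. In each of them, the coefficients $\alpha,\beta,\gamma$ are not independent: Proposition 4.1 of \cite{2a} prescribes a normalization, realized by a rescaling of $x$, $y$ and $t$, in which $\alpha$ and $\beta$ end up expressed as $a(p+4)/2$ or $b(s+4)/2$ for a positive integer $p$ or $s$, while the coefficient multiplying the linear $x$-term collapses to $\tfrac{3}{2}a^{2}$ or $\tfrac{3}{2}b$ and the coefficient of $x^{2}$ is renamed $c$. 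Substituting these normalized expressions back into (\ref{ecuacionprincipal}) delivers families \textbf{IV} and \textbf{V} in the displayed form.

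The main obstacle is this last step: exhibiting the normalization that produces the specific constants $\tfrac{3}{2}a^{2}$ and $\tfrac{3}{2}b$, and checking that the integers $s,p,r$ really do exhaust the residual freedom left by the exponent classification. This amounts to a faithful quadratic specialization of Proposition 4.1 of \cite{2a}, which is stated there for the full (not necessarily quadratic) family; the bulk of the work is careful bookkeeping of the scalings, but one must also verify that no degenerate subcase, in which two of the exponents $\ell_{1},\ell_{2},\ell_{3}$ happen to coincide and two terms of (\ref{ecuacionprincipal}) merge into a single monomial, is silently missed.
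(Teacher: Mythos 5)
Your classification of the exponents $\ell_{1},\ell_{2},\ell_{3}$ is carried out on the raw family (\ref{ecuacionprincipal}), but that is not the object the proposition actually classifies, and this produces a genuine gap. Families \textbf{IV} and \textbf{V} contain \emph{both} a linear term ($-\tfrac{3}{2}a^{2}x$, resp.\ $-\tfrac{3}{2}bx$) \emph{and} a quadratic term $-cx^{2}$ in the second equation, whereas (\ref{ecuacionprincipal}) has only the single monomial $-\gamma x^{2m-2k-1}$ available to produce pure powers of $x$. No choice of $m,k$ and no rescaling of $x$, $y$ and $t$ (which can only rescale coefficients, never create a new monomial) turns one monomial into two of different degrees. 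Indeed your own analysis of the case $\alpha,\beta,\gamma\neq 0$ concludes $\ell_{2}=0$, $\ell_{3}=1$, i.e.\ a system of the shape $\dot y=\alpha x^{\ell_1}y+\beta y-\gamma x$, which matches neither \textbf{IV} nor \textbf{V} and does not appear in the list. The resolution is that the equivalence in the proposition is mediated by the normal forms of Proposition 4.1 of \cite{2a}: those normal forms already carry the extra monomials and the integer parameters $s,p,r$, and the paper's own argument runs the case analysis (on which of $a,b,c$ vanish, and on the degrees $2p+1$, $2s+1$, $s+p+1$ of the normal-form right-hand side) entirely at that level. Your proposal only invokes \cite{2a} at the very end, as a ``normalization of coefficients,'' and explicitly defers it as ``the main obstacle''; but that deferred step is not a coefficient normalization, it is the substance of the proof, and without it your route cannot reach families \textbf{IV} and \textbf{V} at all.

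Two further points of comparison. First, the single-term cases you do handle (family \textbf{I} via $m-k=3/2$, families \textbf{II}/\textbf{III} via $m+k=2$) land on the right systems, but the paper derives family \textbf{I} from the integer data $s=0,p=1$ or $s=1,p=0$ of the normal form, not from half-integer values of $m-k$; your bookkeeping and the paper's are therefore parametrizing different objects, and you would still need to reconcile them. Second, to be fair, the paper's own proof is itself only a sketch (``Some of cases are''): it exhibits the case $a=b=0$, $c\neq0$ and proves that $a,b,c$ all nonzero yields no quadratic system, omitting the derivations of \textbf{II}--\textbf{V}. So neither argument is complete as written; but the paper's incomplete argument is at least aimed at the correct objects, while yours, as structured, cannot be completed without being rebuilt on top of the normal forms of \cite{2a}.
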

\begin{proof} We analyze each subfamily of the system (\ref{ecuacionprincipal}), where We observe the different possibilities for the constants $a,b$ and $c$, are equal to $0$. Some of cases are:
\begin{itemize}
\item[\textbf{I.}] For $a=0$ , $b=0$ and $c\neq0$. We observed two cases:\\
\textbf{Case 1.} If $s=0$, then $p=1$. ~ \textbf{Case 2.} If $s=1$, then $p=0$.\\
\item[\textbf{II.}] For $a\neq0$ , $b\neq0$ and $c\neq0$.\\

We observed that $deg(Q)=max\left\lbrace 2p+1,2s+1,s+p+1\right\rbrace$. \\
\textbf{Case 1.} If $deg(Q)=2p+1$, then $2p+1=2$ so $ p=\frac{1}{2}\notin\mathbb{Z}^{+}$.\\
\textbf{Case 2.} If $deg(Q)=2s+1$, then $2s+1=2$ so $ s=\frac{1}{2}\notin\mathbb{Z}^{+}$.\\
\textbf{Case 3.} If $deg(Q)=s+p+1$, then we return to reasoning in the family \textbf{I}, so $s=0$ then $p=1$, but we have that $2p+1=3$ and this case would be cubic. Same for $p=0$ and $s=1$.Therefore, this family does not have quadratic cases.
\end{itemize}
\end{proof}
\section{Finite Plane}\rm
\subsection{Singularity of the Family I }\rm 
\begin{prop} $(0,0)$ is a cusp of family (\ref{familia1}) .
\end{prop}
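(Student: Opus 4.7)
The plan is to apply the Non-Hyperbolic Singular Points Theorem (Theorem \ref{5}) directly, since the system of family \textbf{I} is already written in the normal form required by that theorem, with $A(x,y)\equiv 0$ and $B(x,y)=-cx^{2}$.

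First I would justify that the hyperbolic classification does not apply: the Jacobian of the vector field at $(0,0)$ is
$$DX(0,0)=\begin{pmatrix} 0 & 1 \\ 0 & 0 \end{pmatrix},$$
whose only eigenvalue is $0$ with algebraic multiplicity two, so Theorem \ref{3} is inapplicable and we must pass to the non-hyperbolic machinery. Implicitly I would assume $c\neq 0$; otherwise the vector field is linear and the origin is not an isolated singularity.

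Next I would carry out the short computation required to feed Theorem \ref{5}. Solving $y+A(x,y)=y=0$ gives the local branch $y=f(x)\equiv 0$. Substituting into $B$ yields
$$F(x)=B(x,f(x))=-cx^{2},$$
so in the notation of the theorem $a=-c\neq 0$ and $m=2$. Also,
$$G(x)=\left(\tfrac{\partial A}{\partial x}+\tfrac{\partial B}{\partial y}\right)(x,f(x))=0,$$
so $G\equiv 0$. This places us in case (1) of Theorem \ref{5} with $m=2$ even, and conclusion (1)(ii) immediately gives that $(0,0)$ is a cusp.

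I do not expect a substantive obstacle here: the argument is essentially a one-line verification of the hypotheses of Theorem \ref{5}. The only mildly delicate point is noting that the sign of $c$ is irrelevant for the cusp conclusion (the parity of $m$ is what matters in case (1)(ii)), and that the degenerate situation $c=0$ must be excluded so that the origin is actually an isolated singular point, as required by the hypothesis of Theorem \ref{5}.
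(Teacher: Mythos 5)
Your proposal is correct and follows essentially the same route as the paper: both reduce to the Non-Hyperbolic Singular Points Theorem with $A\equiv 0$, $B(x,y)=-cx^{2}$, obtaining $F(x)=-cx^{2}$, $G\equiv 0$, and $m=2$ even, hence a cusp. Your added remarks (explicitly invoking case (1)(ii) and noting that $c\neq 0$ is needed for the origin to be an isolated singularity) are minor but welcome clarifications of the same argument.
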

\begin{proof} The critical point associated with the system (\ref{familia1}) is $(0,0)$.
Jacobian matrix is:
\begin{center}
	$\mathcal{M}(x,y)=\left[
	\begin{array}{cl}
	0 & 1 \\
	-2cx & 0
	\end{array}\right]$
\end{center}
Then,
\begin{center}
	$\mathcal{M}(0,0)=\left[
	\begin{array}{cl}
	0 & 1 \\
	0 & 0
	\end{array}\right]$
\end{center}
We see that $\lambda^{2}=0$. Then, according to the Theorem (\ref{5}), where $A(x,y)=0$ and $y=0$, on the other hand We have to $B(x,y)=-cx^{2}$,we get that $F(x)=-cx^{2}$ and $G(x)=0$. Therefore the origin of the system (\ref{familia1}) is a cusp.\\
\end{proof}

\subsection{Singularity of the Family II}\rm 
\begin{prop}\rm The system \eqref{familia2} have infinite critical points. 
\end{prop}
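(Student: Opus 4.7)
The plan is to find all critical points by imposing $\dot x=\dot y=0$ directly on system \eqref{familia2} and exhibit an infinite family of solutions. This is a one-step computation, so the "proof" is essentially a verification and the main task will be to phrase it precisely.

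First, I would set $\dot x = y = 0$, which forces every critical point to lie on the $x$-axis. Next, I would substitute $y=0$ into the second equation $\dot y = 2byx$, obtaining $\dot y = 2b\cdot 0\cdot x = 0$ independently of $x$. Hence any point of the form $(x_{0},0)$ with $x_{0}\in\mathbb{R}$ is an equilibrium of \eqref{familia2}, and therefore the critical set contains the entire line $\{(x,0):x\in\mathbb{R}\}$, which is infinite.

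There is no real obstacle in this argument: the system is degenerate along the $x$-axis, and that degeneracy is what produces the continuum of equilibria. The only thing worth remarking, for completeness, is that the conclusion is independent of the parameter $b$ (in particular it holds whether $b=0$ or $b\neq 0$), so no case distinction on $b$ is required. I would close the proof immediately after exhibiting the line of equilibria.
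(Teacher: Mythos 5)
Your proposal is correct and follows essentially the same route as the paper: both identify the line $\{(x,0):x\in\mathbb{R}\}$ as the set of equilibria of \eqref{familia2}, and you simply make explicit the verification (substituting $y=0$ into both equations) that the paper states without detail. The paper additionally records the first integral $y=bx^{2}+k$ obtained by separation of variables, but that is not needed to establish the infinitude of critical points, so its omission is not a gap.
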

\begin{proof}\rm $(x,0)$ which is a line of critical points associated with the system (\ref{familia2}). We see the solution of the system by separation of variables.
	\begin{center}
		$y=bx^{2}+k$, where $k$ is a constant.
	\end{center}
\end{proof}

\subsection{Singularity of the Family III }\rm 
\begin{prop}\rm The system \eqref{familia3} have infinite critical points. 
\end{prop}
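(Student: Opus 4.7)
The plan is to mirror the argument already used for Family II, since Family III has exactly the same shape with the constant $a$ replacing $b$. First I would compute the critical set by imposing $\dot x = 0$ and $\dot y = 0$ simultaneously. The equation $\dot x = 0$ forces $y = 0$, and substituting this into $\dot y = 2axy$ makes the second equation vanish identically for every $x \in \mathbb{R}$. Hence the whole $x$-axis $\{(x,0):x\in\mathbb{R}\}$ consists of equilibria, which already establishes that the system has infinitely many critical points.

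To round off the analysis in parallel with the preceding proposition, I would then give the explicit orbit structure by separation of variables. Dividing the two equations gives $\dfrac{dy}{dx} = \dfrac{2axy}{y} = 2ax$ away from $y=0$, whose integration yields the one-parameter family of parabolas
\begin{equation*}
y = ax^{2} + k, \qquad k \in \mathbb{R},
\end{equation*}
and the line $y=0$ appears as the degenerate case containing the critical points themselves.

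There is no real obstacle here: the nonlinear term $2axy$ contains the factor $y$, so the locus $\dot y = 0$ automatically contains the locus $\dot x = 0$, which is what guarantees a continuum of equilibria rather than an isolated one. The only thing to be careful about is noting that the line $y=0$ is invariant (consistent with being pointwise fixed), so the separation-of-variables step is performed off this line and then extended by continuity.
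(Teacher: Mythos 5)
Your proposal is correct and follows essentially the same route as the paper: identifying the $x$-axis $\{(x,0)\}$ as a line of equilibria and then integrating $dy/dx = 2ax$ to obtain the parabolas $y = ax^{2}+k$. You merely spell out the elementary verification that the paper leaves implicit.
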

\begin{proof}\rm $(x,0)$ which is a line of critical points associated with the system (\ref{familia3}). We see the solution of the system by separation of variables.
	\begin{center}
		$y=ax^{2}+k$, where $k$ is a constant.
	\end{center}
\end{proof}

\subsection{Singularity of the Family IV}\rm
\begin{prop}\rm 
	\begin{itemize}\rm
		\item[\textbf{a)}] The point $(0,0)$ is an stable node if $a<0$ and unstable if $a>0$, and $(\frac{-3a^{2}}{2c},0)$ is a saddle. 
		\item[\textbf{b)}] If $p=0$, $(0,0)$ is an stable focus if $a<0$ and unstable if $a>0$, and $(\frac{-3a^{2}}{2c},0)$ is a saddle.
	\end{itemize}
\end{prop}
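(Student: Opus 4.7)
The plan is to locate the two critical points, compute the Jacobian at each, and then apply the Hyperbolic Singular Points Theorem (Theorem \ref{3}) case by case, distinguishing the subcase $p=0$ from the rest by the sign of the discriminant of the characteristic polynomial.

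First I would find the critical points by solving $y=0$ together with $a\bigl(\tfrac{p+4}{2}\bigr)y-\tfrac{3}{2}a^{2}x-cx^{2}=0$. Substituting $y=0$ reduces the second equation to $-x\bigl(\tfrac{3}{2}a^{2}+cx\bigr)=0$, which yields exactly the two candidate singularities $(0,0)$ and $\bigl(-\tfrac{3a^{2}}{2c},0\bigr)$. I would then write down the Jacobian
\[
\mathcal{M}(x,y)=\begin{pmatrix} 0 & 1 \\ -\tfrac{3}{2}a^{2}-2cx & a\bigl(\tfrac{p+4}{2}\bigr) \end{pmatrix}.
\]

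For part (a) at the origin, the trace is $T=\tfrac{a(p+4)}{2}$ and the determinant is $D=\tfrac{3a^{2}}{2}>0$, so the two eigenvalues necessarily carry the same sign. The discriminant equals $\tfrac{a^{2}}{4}\bigl[(p+4)^{2}-24\bigr]$, which is non-negative whenever $p\ge 1$ (since then $(p+4)^{2}\ge 25>24$), guaranteeing real eigenvalues. Because the common sign of the eigenvalues coincides with $\operatorname{sgn}(T)=\operatorname{sgn}(a)$, Theorem \ref{3}(b) classifies $(0,0)$ as a stable node when $a<0$ and an unstable node when $a>0$. For the second critical point $\bigl(-\tfrac{3a^{2}}{2c},0\bigr)$, substituting into $\mathcal{M}$ flips the sign of the $(2,1)$ entry to $+\tfrac{3}{2}a^{2}$, giving $\det=-\tfrac{3a^{2}}{2}<0$; by Theorem \ref{3}(a) this is a saddle, with separatrices aligned with the eigenvectors.

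For part (b), setting $p=0$ makes the trace at the origin equal to $2a$ while keeping $D=\tfrac{3a^{2}}{2}$, so the discriminant becomes $4a^{2}-6a^{2}=-2a^{2}<0$ whenever $a\neq 0$. The eigenvalues are therefore complex conjugates $a\pm i\tfrac{\sqrt{2}}{2}|a|$ with real part $a$, and Theorem \ref{3}(c) gives a stable focus for $a<0$ and an unstable focus for $a>0$. The determinant at the other critical point is unchanged by the choice of $p$, so it remains a saddle by exactly the argument used in part (a).

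The calculations themselves are routine trace/determinant bookkeeping; the only genuine subtlety is recognising that the dichotomy between node and focus at the origin is governed entirely by the sign of $(p+4)^{2}-24$, which justifies isolating $p=0$ as a separate case while all positive integer values of $p$ fall under the node regime of part (a).
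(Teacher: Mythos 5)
Your proposal is correct and follows essentially the same route as the paper: compute the Jacobian at the two critical points, read off the eigenvalues, and apply Theorem \ref{3} (product of eigenvalues positive and real for the node, negative determinant for the saddle, complex conjugate pair when $p=0$ for the focus). The one small point where you go beyond the paper is in explicitly verifying that the discriminant $a^{2}\bigl[(p+4)^{2}-24\bigr]$ is positive for every integer $p\geq 1$, which the paper's part (a) assumes tacitly when it asserts the eigenvalues are real; this is a welcome clarification rather than a different method.
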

\begin{proof}\rm Critical points associated with the system (\ref{familia4}) are: $(0,0)$ and $(\frac{-3a^{2}}{2c},0)$.\\
Let $d=a(p+4)$ and Jacobian matrix are:
	\begin{center}
		$\mathcal{M}(x,y)=\left[
		\begin{array}{cl}
		0  & 1 \\
		-\dfrac{3}{2}a^{2}-2cx & \dfrac{d}{2}
		\end{array}\right]$
	\end{center}
	Then,
	\begin{center}
		$\mathcal{M}(0,0)=\left[
		\begin{array}{cl}
		0 & 1 \\
		-\dfrac{3}{2}a^{2} & \dfrac{d}{2}
		\end{array}\right]$
	\end{center}
and,
\begin{center}
	$\mathcal{M}(\frac{-3a^{2}}{2c},0)=\left[
	\begin{array}{cl}
	0 & 1 \\
	\dfrac{3a^{2}}{2} & \dfrac{d}{2}
	\end{array}\right]$
\end{center} 
\begin{itemize}
	\item[\textbf{a.}] For $\mathcal{M}(0,0)$, eigenvalues are:\\
	$\lambda_{1}=\dfrac{1}{4}\left[ d + \sqrt{d^{2}-24a^2} \right] $ and $\lambda_{2}=\dfrac{1}{4}\left[ d - \sqrt{d^{2}-24a^2} \right]$. According to the Theorem (\ref{3}) we see that $\lambda_{1}\lambda_{2}>0$ therefore $(0,0)$ is an stable node if $a<0$ and unstable if $a>0$.\\
	\\
	Now, for $\mathcal{M}(\frac{-3a^{2}}{2c},0)$, eigenvalues are:\\
	$\lambda_{1}=\dfrac{1}{4}\left[ + \sqrt{d^{2}+24a^2} \right] $ y $\lambda_{1}=\dfrac{1}{4}\left[ d - \sqrt{d^{2}+24a^2} \right]$. According to the Theorem (\ref{3}) we see that $\lambda_{1}\lambda_{2}<0$, then $(\frac{-3a^{2}}{2c},0)$ is a saddle.
	\item[\textbf{b.}] If $p=0$, for $\mathcal{M}(0,0)$ eigenvalues are:\\
	$\lambda_{1}=\dfrac{a}{2}(2+i\sqrt{2})$ and $\lambda_{2}=\dfrac{a}{2}(2-i\sqrt{2})$. According to the Theorem (\ref{3}) we see that $(0,0)$ is an stable focus if $a<0$ and unstable if $a>0$.\\
	\\
	Now, for $\mathcal{M}(\frac{-3a^{2}}{2c},0)$, eigenvalues are:\\
	$\lambda_{1}=\dfrac{a}{4}\left[ 4 + 2\sqrt{10} \right] $ and $\lambda_{1}=\dfrac{a}{4}\left[ 4 - 2\sqrt{10} \right]$. According to the Theorem (\ref{3}) we see that $\lambda_{1}\lambda_{2}<0$, then $(\frac{-3a^{2}}{2c},0)$ is a saddle.
\end{itemize}
\end{proof}

\subsection{Singularity of the Family V}\rm
Before looking at the following proposition, We define the following regions:\\
\begin{center}
$\begin{array}{l}
R_{1}=\left\lbrace (b,c,d)\in\mathbb{R}^3 |  d^2-24b>0 \right\rbrace\\
R_{2}=\left\lbrace (b,c,d)\in\mathbb{R}^3 |  d^2-24b=0 \right\rbrace\\ 
R_{3}=\left\lbrace (b,c,d)\in\mathbb{R}^3 |  d^2-24b<0, c>0 \right\rbrace\\
R_{4}=\left\lbrace (b,0,d)\in\mathbb{R}^3 |  d^2-24b>0 \right\rbrace\\
R_{5}=\left\lbrace (b,0,d)\in\mathbb{R}^3 |  d^2-24b=0 \right\rbrace\\
R_{6}=\left\lbrace (b,0,d)\in\mathbb{R}^3 |  d^2-24b<0 \right\rbrace\\
R_{7}=\left\lbrace (0,c,0)\in\mathbb{R}^3 |  c>0 \right\rbrace\\
R_{8}=\left\lbrace (b,c,d)\in\mathbb{R}^3 |  c<0 \right\rbrace\\
\end{array}$
\end{center}

We note that $\mathbb{R}^3=\bigcup_{i=1}^{8} R_i$. Now in $R_3$ and $R_4$ we consider the following subsets:\\
\begin{center}
$E_{1}=\left\lbrace (b,c,d)\in\mathbb{R}^3 |  d^2-24b<0, d>0, c>0 \right\rbrace$\\
$E_{2}=\left\lbrace (b,c,d)\in\mathbb{R}^3 |  d^2-24b<0, d<0, c>0 \right\rbrace$\\
$E_{3}=\left\lbrace (b,c,d)\in\mathbb{R}^3 |  d^2+24b<0, d>0, c>0 \right\rbrace$\\
$E_{4}=\left\lbrace (b,c,d)\in\mathbb{R}^3 |  d^2-24b<0, d<0, c>0 \right\rbrace$\\
$E_{5}=\left\lbrace (b,c,d)\in\mathbb{R}^3 |  d^2-24b<0, d>0, c<0 \right\rbrace$\\
$E_{6}=\left\lbrace (b,c,d)\in\mathbb{R}^3 |  d^2-24b<0, d<0, c<0 \right\rbrace$\\
$E_{7}=\left\lbrace (b,c,d)\in\mathbb{R}^3 |  d^2+24b<0, d>0, c<0 \right\rbrace$\\
$E_{8}=\left\lbrace (b,c,d)\in\mathbb{R}^3 |  d^2-24b<0, d<0, c<0 \right\rbrace$\\
$E_{9}=\left\lbrace (b,c,d)\in\mathbb{R}^3 | d^2-24b>0, d>0, c>0 \right\rbrace$\\
$E_{10}=\left\lbrace (b,c,d)\in\mathbb{R}^3 | d^2-24b>0, d<0, c>0 \right\rbrace$\\
$E_{11}=\left\lbrace (b,c,d)\in\mathbb{R}^3 | d^2-24b>0, d<0, c<0 \right\rbrace$\\
$E_{12}=\left\lbrace (b,c,d)\in\mathbb{R}^3 | d^2-24b>0, d>0 c<0 \right\rbrace$\\

\end{center}

\begin{prop}\rm Let the family (\ref{familia5}) with $(b,c,d)\in\mathbb{R}^3$, then:  \\
	\begin{itemize}\rm
		\item[\textbf{a)}] If $(b,c,d)\in R_{1}$ and $b>0$ then the point $(0,0)$ is unstable node and the point $(\frac{-3b}{2c},0)$ is a saddle. if $b<0$ then the point $(0,0)$ is a saddle and the point $(\frac{-3b}{2c},0)$ is stable node.\\
		\item[\textbf{b)}] If $(b,c,d)\in R_{2}$ and $b>0$, then the critical point $(0,0)$ is a unstable node and the critical point $(\frac{-3b}{2c},0)$ is saddle.\\
		\item[\textbf{c)}] If $(b,c,d)\in R_{3}$ and $b>0$ then the point $(0,0)$ is stable focus and the point $(\frac{-3b}{2c},0)$ is a saddle. If $b<0$ then point  $(0,0)$ is a unstable focus and the point $(\frac{-3b}{2c},0)$ is a unstable node. 
	\end{itemize}
\end{prop}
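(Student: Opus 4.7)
The plan is to mirror the structure of the Family~IV proof: find the critical points of (\ref{familia5}), linearize the vector field, and classify each equilibrium by applying the Hyperbolic Singular Points Theorem~(\ref{3}) case-by-case across the regions $R_{1}$, $R_{2}$, $R_{3}$.

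First I would set $\dot x=\dot y=0$. With $y=0$ the second equation reduces to $x\bigl(-\tfrac{3}{2}b-cx\bigr)=0$, yielding exactly the two critical points $(0,0)$ and $\bigl(-\tfrac{3b}{2c},0\bigr)$ (the second one requires $c\neq 0$, which is implicit in $R_{1}$, $R_{2}$, $R_{3}$). Writing $d=b(s+4)$ as in the Family~IV proof, the Jacobian is
$$\mathcal{M}(x,y)=\begin{bmatrix} 0 & 1 \\ -\tfrac{3}{2}b-2cx & \tfrac{d}{2}\end{bmatrix},$$
so at $(0,0)$ the determinant equals $\tfrac{3b}{2}$ and the trace equals $\tfrac{d}{2}$, while at $\bigl(-\tfrac{3b}{2c},0\bigr)$ the determinant flips to $-\tfrac{3b}{2}$ with the trace unchanged. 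Hence the eigenvalues are $\lambda_{\pm}=\tfrac{1}{4}\bigl[d\pm\sqrt{d^{2}-24b}\,\bigr]$ at the origin and $\lambda_{\pm}=\tfrac{1}{4}\bigl[d\pm\sqrt{d^{2}+24b}\,\bigr]$ at the second point.

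Next I would dispatch each region by reading the signs of the determinant and the discriminant and invoking Theorem~(\ref{3}). For (a), the condition $d^{2}-24b>0$ defining $R_{1}$ gives real eigenvalues at the origin; when $b>0$ they have a common sign equal to that of the trace $d$, producing an unstable node, and when $b<0$ they have opposite signs, producing a saddle. At $\bigl(-\tfrac{3b}{2c},0\bigr)$ the determinant is $-\tfrac{3b}{2}$, so $b>0$ yields a saddle, while $b<0$ yields a stable node after one checks that $d^{2}+24b\geq 0$ so that the eigenvalues remain real. For (b), $R_{2}$ forces $d^{2}=24b\geq 0$; with $b>0$ the origin has a double eigenvalue $d/4$ giving an unstable node, and at the second critical point the discriminant $d^{2}+24b=48b>0$ gives two real eigenvalues of opposite sign, i.e., a saddle. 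For (c), $R_{3}$ forces $b>0$ (since $d^{2}<24b$ is impossible for $b\leq 0$), so the eigenvalues at the origin are complex with real part $d/4$, yielding a focus whose stability is dictated by the sign of $d$; at the second point the determinant $-\tfrac{3b}{2}<0$ still gives a saddle.

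The computations parallel those for Family~IV almost verbatim, and the genuine hurdle will be the bookkeeping in case (a) with $b<0$: the nodal conclusion at $\bigl(-\tfrac{3b}{2c},0\bigr)$ demands the auxiliary inequality $d^{2}+24b\geq 0$, which is not automatic from the defining inequality of $R_{1}$ and must be argued separately. Matching the sign of the trace $d$ to the sign of $b$ (so that, for example, $b<0$ really does produce negative eigenvalues) relies on the implicit convention $\operatorname{sgn}(d)=\operatorname{sgn}(b)$ already used in the Family~IV argument; once that is made explicit, every assertion reduces to a direct application of parts~(a) and~(b) of Theorem~(\ref{3}).
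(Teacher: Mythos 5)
Your proposal follows essentially the same route as the paper: identify the two critical points, write the Jacobian with $d=b(s+4)$, read off the eigenvalues $\tfrac{1}{4}\bigl[d\pm\sqrt{d^{2}\mp 24b}\bigr]$, and classify via Theorem~(\ref{3}) region by region, including the same subcase analysis on $d^{2}+24b$ for $b<0$ in region $R_{1}$ that the paper itself carries out. If anything your sign bookkeeping is more careful than the paper's (which writes $\lambda_{1}\lambda_{2}=\tfrac{-3b}{2}$ at the origin where the determinant is $\tfrac{3b}{2}$, and whose proof of case (c) concludes an unstable focus while the statement says stable), so the proposal is correct and matches the paper's argument.
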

\begin{proof} Let $d=b(s+4)$, so critical points associated with the system (\ref{familia5}) are: $(0,0)$ and $(\frac{-3b}{2c},0)$.\\
then, Jacobian matrix are:
	\begin{center}
		$\mathcal{M}(x,y)=\left[
		\begin{array}{cl}
		0  & 1 \\
		-\dfrac{3}{2}b-2cx & \dfrac{d}{2}
		\end{array}\right]$
	\end{center}
	Then,
	\begin{center}
		$\mathcal{M}(0,0)=\left[
		\begin{array}{cl}
		0 & 1 \\
		-\dfrac{3b}{2} & \dfrac{d}{2}
		\end{array}\right]$
	\end{center}

For $\mathcal{M}(0,0)$, eigenvalues are:

\begin{center}
    $\lambda_{1}=\dfrac{1}{4}\left[ d + \sqrt{d^{2}-24b} \right] $ and $\lambda_{2}=\dfrac{1}{4}\left[ d - \sqrt{d^{2}-24b} \right]$.
\end{center}

Now, for $(\frac{-3b}{2c},0)$, we have that
	\begin{center}
		$\mathcal{M}(\frac{-3b}{2c},0)=\left[
		\begin{array}{cl}
		0 & 1 \\
		\dfrac{3b}{2} & \dfrac{d}{2}
		\end{array}\right]$
	\end{center}
	
With eigenvalues:

\begin{center}
    $\lambda_{1}=\dfrac{1}{4}\left[ d + \sqrt{d^{2}+24b} \right] $ and $\lambda_{2}=\dfrac{1}{4}\left[ d - \sqrt{d^{2}+24b} \right]$.
\end{center}

	\begin{itemize}
		\item[\textbf{a.}] If $(b,c,d)\in R_{1}$ that is $d^2-24b>0$, for $\mathcal{M}(0,0)$ We see that $\lambda_{1}\lambda_{2}=\frac{-3b}{2}$ and $\lambda_{1}>0$, then  according to the Theorem (\ref{3}) the critical point $(0,0)$ is unstable node if $b>0$ and a saddle if $b<0$.\\
		\\
		Now, for $\mathcal{M}(\frac{-3b}{2c},0)$,  we have that $d^2+24b>48b$, then:\\
		\begin{enumerate}
		    \item If $b>0$, According to the Theorem (\ref{3}) We see that $\lambda_{1}\lambda_{2}=\frac{-3b}{2}$, and $b>0$ then   $(\frac{-3b}{2c},0)$ is a saddle. If $b<0$, $\lambda_{2}<0$, then stable node.\\
		    \item If $b<0$ and $d^2+24b\in [ 48b,0 )$, then the critical point $(\frac{-3b}{2c},0)$ is stable focus.\\
		    \item If $b<0$ and $d^2+24b\geq 0$, then the critical point $(\frac{-3b}{2c},0)$ if stable node.\\
		\end{enumerate}
	
		\item[\textbf{b.}] If $(b,c,d)\in R_{2}$ that is $d^2-24b=0$ this leans to $b\geq0$ for $\mathcal{M}(0,0)$ $\lambda_{1}=\lambda_{2}=\frac{d}{4}$, if $b>0$ then the critical point is unstable node. We note that if $b=0$, then this corresponding to (\ref{familia1}). Now, for $\mathcal{M}(\frac{-3b}{2c},0)$, if $b>0$ that is $d^2-24b>0$, Furthermore $\lambda_{1}\lambda_{2}=\frac{-3b}{2}$ according to the Theorem (\ref{3}) the critical point $(\frac{-3b}{2c},0)$ is saddle.\\
		
		\item[\textbf{c.}] If $(b,c,d)\in R_{3}$, that is $d^2-24b<0$ then  $b>0$. For $\mathcal{M}(0,0)$ eigenvalues are: 	$\lambda_{1}=\dfrac{1}{4}(d+i\sqrt{24b-d^2})$ and $\lambda_{2}=\dfrac{1}{4}(d-i\sqrt{24b-d^2})$, then according to the Theorem (\ref{3}), $(0,0)$ is a focus unstable. Now, for $b>0$ We see that $d^2+24b>0$, that is for $\mathcal{M}(\frac{-3b}{2c},0)$, we have that $\lambda_{1,2}\in \mathbb{R}$. So, $\lambda_{1}\lambda_{2}=\frac{-3b}{2}$, so According to the Theorem (\ref{3}) we have that the critical point $(\frac{-3b}{2c},0)$ is a saddle.
	\end{itemize}
\end{proof}

\begin{prop}\rm Given the family (\ref{familia5}) with $c=0$, then: 
\begin{itemize}
    \item[\textbf{a)}] If $(b,0,d)\in R_{4}$ and $b>0$, then the critical point $(0,0)$ is a saddle. If $b<0$, then the critical point $(0,0)$ is a stable node.\\
    \item[\textbf{b)}] If $(b,0,d)\in R_{5}$ and $b>0$, then the critical point $(0,0)$ is a unstable node.\\
    \item[\textbf{c)}] If $(b,0,d)\in R_{6}$ and $b>0$, then the critical point $(0,0)$ is a unstable focus. If $b<0$, then the critical point $(0,0)$ is a stable focus.
\end{itemize}
\end{prop}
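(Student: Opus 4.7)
The plan is to mimic the proof of the previous proposition, taking advantage of the fact that setting $c=0$ reduces family (\ref{familia5}) to the linear system $\dot x = y$, $\dot y = (d/2)y - (3/2)bx$. When $b\neq 0$ the only finite critical point is $(0,0)$, and the secondary point $(-3b/(2c),0)$ from the previous proposition disappears, so the entire analysis collapses to a single $2\times 2$ Jacobian.

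The next step is to evaluate $\mathcal{M}(0,0)$, which is exactly the matrix that appeared in the previous proof, with trace $d/2$ and determinant $3b/2$. The characteristic polynomial $\lambda^2-(d/2)\lambda+(3b/2)=0$ gives eigenvalues $\lambda_{1,2}=(d\pm\sqrt{d^2-24b})/4$. The sign of $d^2-24b$ is precisely the quantity distinguishing $R_4$, $R_5$ and $R_6$, so the three regions line up directly with parts (a), (b), and (c).

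For (a), $d^2-24b>0$ gives real distinct eigenvalues, and Theorem \ref{3}(a)--(b) classifies $(0,0)$ as a saddle or a node according to the signs of the product $3b/2$ and of the trace $d/2$. For (b), $d^2-24b=0$ yields the double eigenvalue $d/4$, so Theorem \ref{3}(b) gives an (improper) node whose stability label is read off the sign of $d$. For (c), $d^2-24b<0$ already forces $b>0$, producing complex conjugate eigenvalues with real part $d/4$, and Theorem \ref{3}(c) then gives a focus that is stable or unstable according to the sign of $d$.

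The main obstacle is bookkeeping rather than mathematics: the definitions of $R_4$, $R_5$, $R_6$ constrain the discriminant (and implicitly $b$) but not the sign of $d$, which is the quantity actually governing stable-versus-unstable. I would therefore carefully record in each subcase which sign of $d$ is implied or assumed (for instance, in $R_5$ with $b>0$ one still needs $d>0$ to conclude the node is unstable), and explicitly flag any subcases that are vacuous, such as $b<0$ inside $R_6$ where $d^2<24b<0$ would be impossible.
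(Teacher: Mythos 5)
Your method is exactly the paper's: with $c=0$ the only finite singularity is the origin, the Jacobian there is $\left(\begin{smallmatrix}0&1\\-3b/2&d/2\end{smallmatrix}\right)$, and everything follows from the eigenvalues $\tfrac{1}{4}\bigl(d\pm\sqrt{d^2-24b}\bigr)$ via Theorem \ref{3}, with $R_4,R_5,R_6$ matching the sign of the discriminant. Your two bookkeeping flags are also the right ones: the sign of $d$ is not fixed by the regions but is implicitly pinned down by $d=b(s+4)$ with $s\in\mathbb{Z}^{+}$, so $\mathrm{sign}(d)=\mathrm{sign}(b)$, which is what makes ``unstable'' the correct label in parts (b) and (c); and the subcase $b<0$ inside $R_6$ is indeed vacuous, even though the paper argues it anyway.

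There is, however, one place where carrying your (correct) computation through collides with the statement itself. You record the determinant as $3b/2$, which is right: $\lambda_1\lambda_2=\tfrac{1}{16}\bigl(d^2-(d^2-24b)\bigr)=\tfrac{3b}{2}$. The paper's proof instead asserts $\lambda_1\lambda_2=-\tfrac{3b}{2}$, and it is this sign error that produces the printed conclusion in part (a). With the correct sign, in $R_4$ the origin is a node when $b>0$ (both eigenvalues positive, since then $d>0$) and a saddle when $b<0$ --- the opposite of what part (a) claims. Consistency forces this as well: the Jacobian at the origin does not involve $c$, so the origin must be classified exactly as in part (a) of the preceding proposition (region $R_1$), where the paper itself concludes ``unstable node if $b>0$ and saddle if $b<0$.'' So your approach is sound, but you should state explicitly that it proves a corrected version of (a), not the version as printed.
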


\begin{proof}\rm With $c=0$, the family $(\ref{familia5})$ have the form: \label{anexo}

\begin{equation}
		\left\{
		\begin{array}{lcl}
		\dot{x}&=& y  \\
		\dot{y}&=& \frac{d}{2}y-\frac{3}{2}bx \label{familiac=0}
		\end{array}\right.
\end{equation}

Here, we see that the only critical point associated with the family (\ref{familiac=0}) is $(0,0)$    
then, Jacobian matrix evaluated in the point is:
	\begin{center}
		$\mathcal{M}(0,0)=\left[
		\begin{array}{cl}
		0 & 1 \\
		-\dfrac{3b}{2} & \dfrac{d}{2}
		\end{array}\right]$
	\end{center}
For $\mathcal{M}(0,0)$, eigenvalues are:
\begin{center}
    $\lambda_{1}=\dfrac{1}{4}\left[ d + \sqrt{d^{2}-24b} \right] $ and $\lambda_{2}=\dfrac{1}{4}\left[ d - \sqrt{d^{2}-24b} \right]$.
\end{center}
		\begin{itemize}
		    \item [\textbf{a.}] If $(b,0,d)\in R_{4}$ that is $d^2-24b>0$, we have that $\lambda_{1}\lambda_{2}=\frac{-3b}{2}$, then if $b>0$ according to the Theorem (\ref{3}) the critical point $(0,0)$ is a saddle and if $b<0$ and $\lambda_{1}<0$ then the critical point $(0,0)$ is a stable node.\\
		    
		    \item [\textbf{b.}] If $(b,0,d)\in R_{5}$ that is $d^2-24b=0$, we have that $\lambda_{1}\lambda_{2}=\frac{d}{4}$, then according to the Theorem (\ref{3}) the critical point $(0,0)$ is a unstable node.\\
		    
		    \item [\textbf{c.}] If $(b,0,d)\in R_{6}$ that is $d^2-24b<0$, we have that $\lambda_{1}=\dfrac{1}{4}(d+i\sqrt{24b-d^2})$ and $\lambda_{2}=\dfrac{1}{4}(d-i\sqrt{24b-d^2})$, then if $b>0$ according to the Theorem (\ref{3}) the critical point $(0,0)$ is a unstable focus and if $b<0$ and $\lambda_{1}<0$ then the critical point $(0,0)$ is a stable focus.
		\end{itemize}
\end{proof}

Now, We will look for the stable manifold associated systems.
\begin{prop}\rm The stable manifold associated with the system (\ref{familia4}) at the point $(\frac{-3a^{2}}{2c},0)$ is: 
	\begin{center}
		$S:y=\frac{c(x+\frac{3a^{2}}{2c})^{2}}{(v-w)(v-2w)}$ 
	\end{center}
\end{prop}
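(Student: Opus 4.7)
The plan is to compute the local stable manifold at the saddle $P=(-3a^{2}/(2c),0)$ by the standard power-series technique at a hyperbolic fixed point. First I translate $P$ to the origin by $u = x + 3a^{2}/(2c)$, converting system (\ref{familia4}) into
\begin{equation*}
\dot u = y, \qquad \dot y = \frac{d}{2}\,y + \frac{3a^{2}}{2}\,u - c\,u^{2},
\end{equation*}
where $d=a(p+4)$ as in the previous proposition. The Jacobian at $(u,y)=(0,0)$ has characteristic polynomial $\lambda^{2}-(d/2)\lambda-3a^{2}/2$, with real roots of opposite sign; call them $v>0>w$. These satisfy the Vieta identities $v+w=d/2$ and $vw=-3a^{2}/2$, together with $v^{2}+3a^{2}/2=v(v-w)$ (and the symmetric identity for $w$); these are the only algebraic inputs needed below.

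Second, I diagonalize by passing to eigencoordinates. Since the eigenvector for $\lambda\in\{v,w\}$ is $(1,\lambda)^{T}$, I set $\xi = (y-wu)/(v-w)$ and $\eta = (vu-y)/(v-w)$, which inverts to $u=\xi+\eta$ and $y=v\xi+w\eta$. Direct substitution, using the identities above to cancel the mixed terms, converts the system to
\begin{equation*}
\dot\xi = v\,\xi - \frac{c}{v-w}(\xi+\eta)^{2}, \qquad \dot\eta = w\,\eta + \frac{c}{v-w}(\xi+\eta)^{2}.
\end{equation*}
The stable manifold theorem then supplies a smooth invariant curve $\xi = h(\eta)$ with $h(0)=h'(0)=0$, tangent at the origin to the stable eigenspace (the $\eta$-axis).

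Third, I insert the ansatz $h(\eta) = A\eta^{2}+O(\eta^{3})$ into the invariance condition $h'(\eta)\,\dot\eta = \dot\xi$. Because $h$ starts quadratically, $(\xi+\eta)^{2} = \eta^{2}+O(\eta^{3})$ and $h'(\eta)(\xi+\eta)^{2} = O(\eta^{3})$, so at order $\eta^{2}$ the invariance condition collapses to $2Aw = vA - c/(v-w)$; hence $A(v-2w) = c/(v-w)$ and
\begin{equation*}
A = \frac{c}{(v-w)(v-2w)}.
\end{equation*}
Reversing the translation $u = x + 3a^{2}/(2c)$ then produces the stated formula for $S$.

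The main obstacle I anticipate is the algebraic bookkeeping in the diagonalization step: substituting $u=\xi+\eta$ and $y=v\xi+w\eta$ into the translated system produces many cross terms, and it is precisely the Vieta-type identities for $v$ and $w$ that collapse them to the single compact nonlinearity $\pm c(\xi+\eta)^{2}/(v-w)$ appearing above. Once that simplification is in hand, matching the quadratic coefficient is immediate, and the stable manifold theorem promotes the formal power series to a genuine smooth invariant curve through $P$.
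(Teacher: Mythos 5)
Your proof is correct, but it takes a genuinely different route from the paper's. The paper runs Perko's constructive proof of the stable manifold theorem: after diagonalizing the linear part it sets up the integral equation $u(t,b)=U(t)b+\int_0^t U(t-s)G(u)\,ds-\int_t^\infty V(t-s)G(u)\,ds$, computes the successive approximations $u^{(0)},u^{(1)},u^{(2)}$ explicitly, and reads the quadratic coefficient off the second component of $u^{(2)}(0,b)$. You instead write the manifold as a graph $\xi=h(\eta)$ in eigencoordinates and extract $A=c/((v-w)(v-2w))$ from the invariance identity $h'(\eta)\dot\eta=\dot\xi$ at order $\eta^2$; this avoids the integrals entirely, and the Vieta identities you isolate ($v+w=d/2$, $vw=-3a^2/2$) are indeed exactly what collapse the nonlinearity to $\pm c(\xi+\eta)^2/(v-w)$ --- I checked the reduction and the coefficient match, and both are right. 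Two points of comparison are worth recording. First, your labels are swapped relative to the paper's ($v>0>w$ for you, versus $w=\lambda_1>0$, $v=\lambda_2<0$ there); since $(v-w)(v-2w)$ is not symmetric in $v$ and $w$ this matters numerically, and your convention is the one consistent with a direct verification of the invariance of $y=wu+\tfrac{c}{v-2w}u^2+O(u^3)$. Second, your closing sentence ``reversing the translation then produces the stated formula'' elides undoing the linear change of basis: in the original variables the manifold is $y=w\bigl(x+\tfrac{3a^2}{2c}\bigr)+\tfrac{c}{v-2w}\bigl(x+\tfrac{3a^2}{2c}\bigr)^2+\cdots$, and the displayed formula for $S$ is really the relation between the eigencoordinates after relabeling $\xi\mapsto y$, $\eta\mapsto x+\tfrac{3a^2}{2c}$. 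This is the same identification the paper itself makes when it reads $\psi_2$ off $u^{(2)}$, so your argument proves the proposition as stated, but you should make that last identification explicit rather than attributing it to the translation alone.
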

\begin{proof}\rm Let is observe the stability of the system (\ref{familia4}) in the point $(\frac{-3a^{2}}{2c},0)$:\\
	Let is look at the eigenvalues in the Jacobian matrix of the system (\ref{familia4}) in the point $(\frac{-3a^{2}}{2c,0})$.
	\\
\begin{center}
		$A=\left[
	\begin{array}{cl}
	0 & 1 \\
	\dfrac{3a^{2}}{2} & \dfrac{d}{2}
	\end{array}\right]$ 
\end{center}
	So, $w=\lambda_{1}=\dfrac{1}{4}[d+\sqrt{d^{2}+24a^2}]$ \hspace{2mm} and \hspace{2mm} $v=\lambda_{2}=\dfrac{1}{4}[d-\sqrt{d^{2}+24a^2}].$\\
	\\
	Then, $ B(x)=C^{-1}AC=\left[
	\begin{array}{lcl}
	w & 0 \\
	0 & v
	\end{array}\right]$ \\
	\\
	\\
	$F(x)=\left[\begin{array}{c}
	0\\
	-cx^{2}
	\end{array} \right], \hspace{2mm} G(x)=\dfrac{cx^{2}}{v-w}\left[\begin{array}{cl}
	1\\
	-1
	\end{array} \right]$\\
	\begin{center}
		$U(t)=\left[\begin{array}{lcl}
		e^{wt} & 0\\[2mm]
		0 & 0
		\end{array} \right], \hspace{2mm} V(t)=\left[\begin{array}{lcl}
		0 & 0\\[2mm]
		0 & e^{vt}
		\end{array} \right],\hspace{2mm} b=\left[\begin{array}{c}
		b_{1}\\[2mm]
		0
		\end{array} \right]$
	\end{center}
	Then,\\
	\\
	$u^{(0)}(t,b)=0.$ \\
	\\
	$u^{(1)}(t,b)=\left[\begin{array}{c}
	e^{wt}b_{1}\\
	0
	\end{array} \right]$\\
	\\
	$u^{(2)}(t,b)=\left[\begin{array}{c}
	e^{wt}b_{1}\\
	0 
	\end{array} \right]+\int_{0}^{t}{\left[\begin{array}{lcl}
		e^{w(t-s)} & 0\\
		0 & 0.
		\end{array} \right]\left[\begin{array}{c}
		\frac{b_{1}^{2}c}{v-w}e^{2ws}\\
		-\frac{b_{1}^{2}c}{v-w}e^{2ws}
		\end{array} \right]}ds-$
	$\int_{t}^{\infty}{\left[\begin{array}{lcl}
		0 & 0\\
		0 & e^{v(t-s)}
		\end{array} \right]\left[\begin{array}{c}
		\frac{b_{1}^{2}c}{v-w}e^{2ws}\\
		-\frac{b_{1}^{2}c}{v-w}e^{2ws}
		\end{array} \right]}ds=\\
	u^{(2)}(t,b)=\left[\begin{array}{c}
	e^{wt}b_{1}+\frac{b_{1}^{2}ce^{2wt}[e^{wt}-1]}{w(v-w)}\\
	\\
	\frac{b_{1}^{2}ce^{2wt}}{(v-w)(v-2w)}
	\end{array} \right]
	$\\
	\\
	Therefore, We can approximate by $\psi_{2}(b_{1})=b_{1}$, therefore  the stable manifold can be approximated by 
	\begin{center}
		$S:y=\frac{c(x+\frac{3a^{2}}{2c})^{2}}{(v-w)(v-2w)}$ 
	\end{center}
	like $x \to 0$. Similarly the unstable
	\begin{center}
		$U: x + \frac{3a^{2}}{2c} = \frac{2cy^{2}}{(v-w)((v-2w)}$
	\end{center}
\end{proof}

\begin{prop}\rm For the system (\ref{familia5}) We have that:
	\begin{itemize}
		\item[\textbf{a)}] If $(b,c,d)\in R_{1}$ and $b<0$, stable manifold at the point $(0,0)$ is:
		\begin{center}
			$S:y=\frac{cx^{2}}{(v-w)(v-2w)}$ 
		\end{center}
		\item[\textbf{b)}] If $(b,c,d)\in \lbrace\left(x,y,z)/ x>0, y\neq0 \right\rbrace $, then stable manifold at the point  $(\frac{-3b}{2c},0)$ es: 
		\begin{center}
			$S:y=\frac{c(x+\frac{3b}{2c})^{2}}{(v-w)(v-2w)}$ 
		\end{center}
	\end{itemize}
\end{prop}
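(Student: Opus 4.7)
My strategy is to mimic the preceding proposition (stable manifold of family (\ref{familia4})) almost verbatim: apart from a translation of the base point, system (\ref{familia5}) differs from (\ref{familia4}) only by the replacement of the coefficient $-\tfrac{3}{2}a^{2}$ with $-\tfrac{3}{2}b$, so the stable manifold calculation transcribes directly. In part (a) the relevant critical point is $(0,0)$, which for $(b,c,d)\in R_{1}$ with $b<0$ is a saddle because $\det\mathcal{M}(0,0)=\tfrac{3b}{2}<0$; in part (b) the critical point is $(-\tfrac{3b}{2c},0)$, which under $b>0$ and $c\neq 0$ is a saddle because $\det\mathcal{M}(-\tfrac{3b}{2c},0)=-\tfrac{3b}{2}<0$. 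In both cases the stable manifold theorem produces a one dimensional invariant manifold tangent to the eigenvector of the negative eigenvalue, and what we compute is its quadratic approximation.

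\textbf{Steps.} First, I translate so that the critical point sits at the origin: in case (a) nothing is needed; in case (b) I set $x\mapsto x+\tfrac{3b}{2c}$, which brings the system into the form $\dot{x}=y$, $\dot{y}=\tfrac{3b}{2}x+\tfrac{d}{2}y-cx^{2}$. Second, I diagonalize the linear part by the matrix $C$ of eigenvectors, so that $B=C^{-1}AC=\mathrm{diag}(w,v)$ with
\[
w=\tfrac{1}{4}\bigl[d+\sqrt{d^{2}\mp 24b}\bigr],\qquad v=\tfrac{1}{4}\bigl[d-\sqrt{d^{2}\mp 24b}\bigr],
\]
the $-$ sign holding in case (a) and the $+$ sign in case (b). The nonlinearity $F(x)=(0,-cx^{2})^{T}$ transforms, exactly as in the family (\ref{familia4}) computation, into $G(x)=\tfrac{cx^{2}}{v-w}(1,-1)^{T}$. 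Third, I initialize $u^{(0)}=0$ and run two Picard iterations of the integral equation
\[
u^{(n+1)}(t,b_{1})=U(t)b_{1}+\int_{0}^{t}U(t-s)G(u^{(n)})\,ds-\int_{t}^{\infty}V(t-s)G(u^{(n)})\,ds,
\]
using the same projectors $U(t)$ and $V(t)$ as in the preceding proposition. The computation is formally identical and returns a quadratic graph $y=\dfrac{cx^{2}}{(v-w)(v-2w)}$ in the translated coordinates. In case (b) I then invert the translation to recover the stated expression.

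\textbf{Main obstacle.} Because the algebra is a direct transcription of the previous proof, the only non routine point is bookkeeping the sign and reality hypotheses supplied by the parameter region. In (a) the constraint $d^{2}-24b>0$ combined with $b<0$ gives real eigenvalues of opposite sign (since $\det A=\tfrac{3b}{2}<0$) and $v-w=-\tfrac{1}{2}\sqrt{d^{2}-24b}\neq 0$; in (b), since $b>0$, both $d^{2}+24b>0$ and $\det A=-\tfrac{3b}{2}<0$ hold, again producing one positive and one negative eigenvalue together with $v-w=-\tfrac{1}{2}\sqrt{d^{2}+24b}\neq 0$. Under these sign conditions the improper integral in the Picard scheme converges and, away from the codimension one locus where $v=2w$, the denominator $(v-w)(v-2w)$ is nonzero, so the quadratic approximation from the preceding proposition applies unchanged and yields the announced formulas.
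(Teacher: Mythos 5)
Your proposal follows essentially the same route as the paper: the paper likewise diagonalizes the linear part with $w,v=\tfrac{1}{4}\bigl[d\pm\sqrt{d^{2}\mp 24b}\bigr]$, transforms the nonlinearity into $G(x)=\tfrac{cx^{2}}{v-w}(1,-1)^{T}$, and runs the same two Picard iterations $u^{(0)},u^{(1)},u^{(2)}$ with the projectors $U(t)$, $V(t)$ to obtain the quadratic approximation $y=\tfrac{cx^{2}}{(v-w)(v-2w)}$, translated by $\tfrac{3b}{2c}$ in case (b). Your added bookkeeping of the saddle conditions and the explicit translation step is sound and only makes explicit what the paper leaves implicit.
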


\begin{proof}\rm 
	\textbf{a)} Let is observe the stability of the system (\ref{familia5}) for $b<0$ at the point $(0,0)$:\\
Let is observe the stability of the system (\ref{familia5}) in the point $(0,0)$.
	\\
	\begin{center}
		$A=\left[
	\begin{array}{cl}
	0 & 1 \\
	\dfrac{-3b}{2} & \dfrac{d}{2}
	\end{array}\right]$ 
	\end{center}

	Let, $w=\lambda_{1}=\dfrac{1}{4}[d+\sqrt{d^{2}-24b}]$ \hspace{2mm} and \hspace{2mm} $v=\lambda_{2}=\dfrac{1}{4}[d-\sqrt{d^{2}-24b}].$\\
	\\
	Then, $ B(x)=C^{-1}AC=\left[
	\begin{array}{lcl}
	w & 0 \\
	0 & v
	\end{array}\right]$ \\
	\\
	\\
	$F(x)=\left[\begin{array}{c}
	0\\
	-cx^{2}
	\end{array} \right], \hspace{2mm} G(x)=\dfrac{cx^{2}}{v-w}\left[\begin{array}{cl}
	1\\
	-1
	\end{array} \right]$\\
	\begin{center}
		$U(t)=\left[\begin{array}{lcl}
		e^{wt} & 0\\[2mm]
		0 & 0
		\end{array} \right], \hspace{2mm} V(t)=\left[\begin{array}{lcl}
		0 & 0\\[2mm]
		0 & e^{vt}
		\end{array} \right],\hspace{2mm} a=\left[\begin{array}{c}
		a_{1}\\[2mm]
		0
		\end{array} \right]$
	\end{center}
	Then,\\
	\\
	$u^{(0)}(t,a)=0.$ \\
	\\
	$u^{(1)}(t,a)=\left[\begin{array}{c}
	e^{wt}a_{1}\\
	0
	\end{array} \right]$\\
	\\
	$u^{(2)}(t,a)=\left[\begin{array}{c}
	e^{wt}a_{1}\\
	0 
	\end{array} \right]+\int_{0}^{t}{\left[\begin{array}{lcl}
		e^{w(t-s)} & 0\\
		0 & 0.
		\end{array} \right]\left[\begin{array}{c}
		\frac{a_{1}^{2}c}{v-w}e^{2ws}\\
		-\frac{a_{1}^{2}c}{v-w}e^{2ws}
		\end{array} \right]}ds-$
	$\int_{t}^{\infty}{\left[\begin{array}{lcl}
		0 & 0\\
		0 & e^{v(t-s)}
		\end{array} \right]\left[\begin{array}{c}
		\frac{a_{1}^{2}c}{v-w}e^{2ws}\\
		-\frac{a_{1}^{2}c}{v-w}e^{2ws}
		\end{array} \right]}ds=\\
	u^{(2)}(t,a)=\left[\begin{array}{c}
	e^{wt}a_{1}+\frac{a_{1}^{2}ce^{2wt}[e^{wt}-1]}{w(v-w)}\\
	\\
	\frac{a_{1}^{2}ce^{2wt}}{(v-w)(v-2w)}
	\end{array} \right]
	$\\
	\\
	Therefore, We can approximate by $\psi_{2}(a_{1})=a_{1}$, therefore  the stable manifold can be approximated by 
	\begin{center}
		$S:y=\frac{cx^{2}}{(v-w)(v-2w)}$ 
	\end{center}
	Like $x \to 0$. Similarly the unstable
	\begin{center}
		$U: x= \frac{2cy^{2}}{(v-w)((v-2w)}$
	\end{center}
	\textbf{b)} Let is observe the stability of the system (\ref{familia5}) at the point $(\frac{-3b}{2c},0)$, when $b>0$:\\
	\\
	\begin{center}
		$A=\left[
	\begin{array}{cl}
	0 & 1 \\
	\dfrac{3b}{2} & \dfrac{d}{2}
	\end{array}\right]$
	\end{center}
	Let, $w=\lambda_{1}=\dfrac{1}{4}[d+\sqrt{d^{2}+24b}]$ \hspace{2mm} and \hspace{2mm} $v=\lambda_{2}=\dfrac{1}{4}[d-\sqrt{d^{2}+24b}].$\\
	\\
	So, $ B(x)=C^{-1}AC=\left[
	\begin{array}{lcl}
	w & 0 \\
	0 & v
	\end{array}\right]$ \\
	\\
	\\
	$F(x)=\left[\begin{array}{c}
	0\\
	-cx^{2}
	\end{array} \right], \hspace{2mm} G(x)=\dfrac{cx^{2}}{v-w}\left[\begin{array}{cl}
	1\\
	-1
	\end{array} \right]$\\
	\begin{center}
		$U(t)=\left[\begin{array}{lcl}
		e^{wt} & 0\\[2mm]
		0 & 0
		\end{array} \right], \hspace{2mm} V(t)=\left[\begin{array}{lcl}
		0 & 0\\[2mm]
		0 & e^{vt}
		\end{array} \right],\hspace{2mm} a=\left[\begin{array}{c}
		a_{1}\\[2mm]
		0
		\end{array} \right]$
	\end{center}
	Then,\\
	\\
	$u^{(0)}(t,a)=0.$ \\
	\\
	$u^{(1)}(t,a)=\left[\begin{array}{c}
	e^{wt}a_{1}\\
	0
	\end{array} \right]$\\
	\\
	$u^{(2)}(t,a)=\left[\begin{array}{c}
	e^{wt}a_{1}\\
	0 
	\end{array} \right]+\int_{0}^{t}{\left[\begin{array}{lcl}
		e^{w(t-s)} & 0\\
		0 & 0.
		\end{array} \right]\left[\begin{array}{c}
		\frac{a_{1}^{2}c}{v-w}e^{2ws}\\
		-\frac{a_{1}^{2}c}{v-w}e^{2ws}
		\end{array} \right]}ds-$
	$\int_{t}^{\infty}{\left[\begin{array}{lcl}
		0 & 0\\
		0 & e^{v(t-s)}
		\end{array} \right]\left[\begin{array}{c}
		\frac{a_{1}^{2}c}{v-w}e^{2ws}\\
		-\frac{a_{1}^{2}c}{v-w}e^{2ws}
		\end{array} \right]}ds=\\
	u^{(2)}(t,a)=\left[\begin{array}{c}
	e^{wt}a_{1}+\frac{a_{1}^{2}ce^{2wt}[e^{wt}-1]}{w(v-w)}\\
	\\
	\frac{a_{1}^{2}ce^{2wt}}{(v-w)(v-2w)}
	\end{array} \right]
	$\\
	\\
	Therefore, We can approximate by $\psi_{2}(a_{1})=a_{1}$, therefore  the stable manifold can be approximated by 
	\begin{center}
		$S:y=\frac{c(x+\frac{3b}{2c})^{2}}{(v-w)(v-2w)}$ 
	\end{center}
	Like $x \to 0$. Similarly the unstable
	\begin{center}
		$U: x + \frac{3b}{2c} = \frac{2cy^{2}}{(v-w)((v-2w)}$
	\end{center}
\end{proof}

\section{Bifurcations}
In this section we will analyze the study of the bifurcations of family (\ref{familia5})
	

\subsection{Family V}
\begin{prop} Let sets $R_{7}$ and $R_{8}$ are transcritical bifurcations for the system (\ref{familia5}) 
\end{prop}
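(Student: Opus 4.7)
The plan is to verify the two defining features of a transcritical bifurcation listed in Section 2: the two equilibria must persist on both sides of the candidate bifurcation set and collide on it, and they must exchange stability when the parameter crosses it.

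First I would pin down the equilibria of (\ref{familia5}). Setting $y=0$ and $\frac{d}{2}y-\frac{3}{2}bx-cx^{2}=0$ yields $y=0$ and $x(-\frac{3b}{2}-cx)=0$, so the two critical points are $(0,0)$ and $P_{b}:=(-\frac{3b}{2c},0)$. They are distinct precisely when $b\neq 0$ (with $c\neq 0$) and coalesce at the origin when $b=0$. Hence the candidate bifurcation value is $b=0$ and the distinguished parameter is $b$; both $R_{7}$ (where $b=d=0$, $c>0$) and the slice $\{b=0\}$ of $R_{8}$ (where $c<0$) lie in this collision locus, which identifies them as the candidate bifurcation sets.

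Next I would use the eigenvalue formulas already computed in the preceding propositions to track stability across $b=0$. At $(0,0)$ one has $\lambda_{1,2}=(d\pm\sqrt{d^{2}-24b})/4$, so $\lambda_{1}\lambda_{2}=3b/2$; at $P_{b}$ one has $\lambda_{1,2}=(d\pm\sqrt{d^{2}+24b})/4$, so $\lambda_{1}\lambda_{2}=-3b/2$. The sign of each product changes as $b$ passes through zero, and in opposite ways at the two points. Concretely, for $b>0$ the origin is a non-saddle (node or focus, depending on the sign of $d$) while $P_{b}$ is a saddle, and for $b<0$ the origin becomes a saddle while $P_{b}$ becomes a non-saddle equilibrium. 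This is exactly the exchange of stability demanded by the definition of a transcritical bifurcation given in Section 2, so both $R_{7}$ and $R_{8}$ produce such bifurcations.

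I expect the main obstacle to be the degeneracy on the line $R_{7}$ where additionally $d=0$: there the Jacobian at $(0,0)$ has a double zero eigenvalue, so Theorem \ref{3} does not apply directly and one is not immediately in the hypotheses of the standard transcritical normal form $\dot{x}=\mu x-cx^{2}$. I would resolve this with a one-dimensional centre-manifold reduction along the $x$-direction: writing $y=h(x;b)$ with $h$ tangent to the centre subspace at the collision, the reduced flow on the centre manifold is, to leading order, $\dot{x}=-\frac{3b}{2}x-cx^{2}+O(x^{3})$, which is manifestly the transcritical normal form with bifurcation parameter $b$ and non-degeneracy constant $-c\neq 0$. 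The two equilibrium branches $x=0$ and $x=-3b/(2c)$ cross transversely at $b=0$ and swap their stability type, completing the verification on $R_{7}$ and on the locus $\{b=0\}\cap R_{8}$ simultaneously.
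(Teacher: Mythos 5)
Your core argument --- that the two equilibria $(0,0)$ and $(-\tfrac{3b}{2c},0)$ collide exactly on the locus $b=0$ and exchange their saddle/anti-saddle character there, because $\det\mathcal{M}(0,0)=\tfrac{3b}{2}$ and $\det\mathcal{M}(-\tfrac{3b}{2c},0)=-\tfrac{3b}{2}$ change sign in opposite ways as $b$ crosses $0$ --- is correct and is in substance what the paper does, only more economically: the paper instead walks through the subregions $E_1,\dots,E_4$, reads off from Proposition 4.5 the type of each point on either side (saddle versus focus), and observes that on the collision set the merged point is a cusp (it is the origin of Family I, since $d=b(s+4)$ vanishes together with $b$). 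Your determinant bookkeeping packages all of those subcases at once, and your remark that the true collision locus is $\{b=0\}$, so that $R_7$ lies inside it while $R_8=\{c<0\}$ only meets it in a slice, is a fair reading of (and correction to) the statement as literally written.

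The genuine gap is in your proposed resolution of the degeneracy. Because $d=b(s+4)$, the collision always occurs at $b=d=0$, where the linearization at the merged point is the nilpotent matrix $\left(\begin{smallmatrix}0&1\\0&0\end{smallmatrix}\right)$: a double zero eigenvalue with a one-dimensional eigenspace. There is no splitting into a one-dimensional centre direction with a normally hyperbolic complement, so the graph ansatz $y=h(x;b)$ and the reduced equation $\dot x=-\tfrac{3}{2}bx-cx^2+O(x^3)$ are not justified by centre-manifold theory at that point. Indeed, $\dot x=y=h(x)$ forces the invariance relation $h'(x)h(x)=\tfrac{d}{2}h(x)-\tfrac{3}{2}bx-cx^2$, whose linear coefficient must be an eigenvalue of the Jacobian; at $b=d=0$ this only reproduces the cusp classification of Theorem \ref{5}, not a transcritical normal form. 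The paper sidesteps this entirely: it never reduces to a normal form, but verifies its own (informal) definition of a transcritical bifurcation --- two equilibria persisting on both sides, colliding into a cusp on the bifurcation set, and exchanging stability --- all of which your determinant computation already establishes. Your proof is therefore complete once you drop, or replace, the centre-manifold paragraph; as written, that is the one step that would fail.
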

\begin{proof}\rm Let $P_{1}:(0,0)$ and $P_{2}:(\frac{3b}{2c},0)$ of proposition $(4.5)$. If $(b,c,d)\in E_{3}$ then $P_{1}$ a saddle and $P_{2}$ is a unstable focus, when $(b,c,d)\in R_{7}$, $P_{1}$ and $P_{2}$, they collapse on one critical point which point is a cusp. So, when $(b,c,d)\in E_{2}$ then $P_{1}$ a unstable focus and $P_{2}$ is a saddle. Similarly, the same behavior is observed when $(b,c,d)\in E_{2}$, then $(b,c,d)\in R_{7}$ and finally $(b,c,d)\in E_{4}$.\\

Now,  Let $P_{1}:(0,0)$ and $P_{2}:(\frac{3b}{2c},0)$ of proposition $(4.5)$. If $(b,c,d)\in E_{4}$ then $P_{1}$ a saddle and $P_{2}$ is a stable focus, when $(b,c,d)\in R_{8}$, $P_{1}$ and $P_{2}$, they collapse on one critical point which point is a cusp. So, when $(b,c,d)\in E_{1}$ then $P_{1}$ a stable focus and $P_{2}$ is a saddle. Similarly, the same behavior is observed when $(b,c,d)\in E_{1}$, then $(b,c,d)\in R_{8}$ and finally $(b,c,d)\in E_{3}$.\\
Therefore, sets $R_{7}$ and $R_{8}$ are transcritical bifurcations for the system (\ref{familia5})
\end{proof}

\begin{prop}\rm A set $\{(b,0,d) | d^2-24b<0\}$ is a bifurcations saddle-focus-saddle for the system (\ref{familia5}).
\end{prop}

\begin{proof}\rm For proposition $(4.5)$, if $(b,c,d) \in E_{1}$, the point $P_{1}$ is a stable focus and $P_{2}$ is a saddle. Now, when $(b,c,d)\in \left\lbrace (b,0,d) | d^2-24b<0 , d>0 \right\rbrace $ for proposition (\ref{anexo}), $P_{1}$ and $P_{2}$ they collapse in an unstable focus when $(b,c,d)$ goes the set $E_{5}$, appear again $P_{1}$ and $P_{2}$ like stable focus and a saddle respectively.
\end{proof}

\begin{prop}\rm A set $\{(b,0,d) | d^2-24b<0\}$ is a bifurcations saddle-focus-saddle for the system (\ref{familia5}).
\end{prop}

\begin{proof}\rm For proposition $(4.5)$, if $(b,c,d) \in E_{2}$, the point $P_{1}$ is a unstable focus and $P_{2}$ is a saddle. Now, when $(b,c,d)\in \left\lbrace (b,0,d) | d^2-24b<0 , d<0 \right\rbrace $ for proposition (\ref{anexo}), $P_{1}$ and $P_{2}$ they collapse in an stable focus when $(b,c,d)$ goes the set $E_{6}$, appear again $P_{1}$ and $P_{2}$ like unstable focus and a saddle respectively.
    
\end{proof}

\begin{prop}\rm Let sets $E_{9}$ and $E_{10}$ are local bifurcations for the system (\ref{familia5})
\end{prop}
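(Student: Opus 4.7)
The plan is to observe that inside each of the three-dimensional regions $E_{9}$ and $E_{10}$, as the parameter $b$ crosses zero with $c$ and $d$ held fixed, the two critical points $P_{1}=(0,0)$ and $P_{2}=(-3b/2c,0)$ of the system (\ref{familia5}) collide at the origin and then swap their stability type, which is precisely the mechanism of a transcritical, and hence local, bifurcation. Concretely, I would first invoke the eigenvalue computations already recorded in the proof of Proposition 4.5, together with the Hyperbolic Singular Points Theorem, to catalogue the phase portrait on each side of $b=0$ inside $E_{9}$. Since $E_{9}$ is defined by $d>0$, $c>0$ and $d^{2}-24b>0$, any sufficiently small neighbourhood of a point with $b=0$, $d>0$, $c>0$ sits entirely in $E_{9}$; for $b>0$ small, $P_{1}$ is an unstable node and $P_{2}$ is a saddle, while for $b<0$ small the roles are exchanged, with $P_{1}$ becoming a saddle and $P_{2}$ becoming an unstable node.

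The analogous analysis in $E_{10}$ differs only by the sign of $d$, so it yields stable nodes in place of unstable ones while preserving the same saddle--node exchange between $P_{1}$ and $P_{2}$ across $b=0$. The key geometric fact is that the abscissa $-3b/2c$ of $P_{2}$ tends to $0$ as $b\to 0$ with $c>0$ fixed, so at $b=0$ the two equilibria coalesce at the origin; on either side of this coalescence their topological types are interchanged, which is exactly the defining property of a transcritical bifurcation recalled in the Preliminaries.

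To close the argument I would note that at the bifurcation value $b=0$ the Jacobian of the merged critical point has eigenvalues $0$ and $d/2\neq 0$, the standard non-hyperbolic degeneracy of a codimension-one local bifurcation; if a finer description of the merged point itself is desired, the Non-Hyperbolic Singular Points Theorem can be applied directly, with $F(x)$ and $G(x)$ computed from the reduced planar vector field. Since the two critical points exist for every admissible value of $b$ and merely exchange their stabilities through collision, no new eigenvalue calculation beyond those already done for Proposition 4.5 is required, and the conclusion follows.

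The main obstacle, as in the preceding propositions of this section, is bookkeeping the sign conditions rather than any substantive analytic difficulty: one must verify that a perturbation of $b$ away from $0$ keeps the triple $(b,c,d)$ inside $E_{9}$ (respectively $E_{10}$) rather than crossing into $R_{2}$, $R_{3}$ or $R_{8}$, where the topological type of the equilibria would change for a different reason. This is automatic provided $d\neq 0$ and $|b|$ is small, since the inequalities $d^{2}-24b>0$ and $c>0$ together with the sign of $d$ are all preserved under sufficiently small perturbations, so the transcritical scenario just described is genuinely realised inside both $E_{9}$ and $E_{10}$.
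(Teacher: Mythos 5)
Your proposal identifies a different bifurcation from the one the paper is actually asserting here. The paper's own proof is a two-line comparison \emph{across} the two regions: it fixes the single critical point $P_{1}=(0,0)$ and observes (via the eigenvalues $\lambda_{1,2}=\tfrac14\bigl(d\pm\sqrt{d^{2}-24b}\bigr)$ from Proposition 4.5) that $P_{1}$ is a node of one stability type when $(b,c,d)\in E_{9}$ and a node of the opposite stability type when $(b,c,d)\in E_{10}$; since $E_{9}$ and $E_{10}$ differ only in the sign of $d$, the local bifurcation in question is the stability reversal of $P_{1}$ as $d$ crosses $0$ (the trace of the Jacobian changing sign), exactly parallel to the following proposition which does the same for $P_{2}$ using $E_{11}$ and $E_{12}$. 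Your argument, by contrast, holds $d$ fixed and nonzero and varies $b$ through $0$ \emph{inside} each region separately, producing a transcritical collision of $P_{1}$ and $P_{2}$. That scenario never uses the defining difference between $E_{9}$ and $E_{10}$, so it cannot explain why the proposition pairs these two particular sets; moreover, the $b=0$ collision is precisely the mechanism the paper has already assigned to the sets $R_{7}$ and $R_{8}$ in the earlier transcritical-bifurcation proposition, so you are re-deriving a previously treated phenomenon rather than proving the new statement.

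Two smaller points. First, within your own scenario the stability labels need care: for $b<0$ small the second point $P_{2}=(-3b/2c,0)$ has eigenvalues $\tfrac14\bigl(d\pm\sqrt{d^{2}+24b}\bigr)$, so it is a node only when $d^{2}+24b>0$; this does hold for $|b|$ sufficiently small, but the restriction should be stated. Second, note that the paper's proof as written says $P_{1}$ is a \emph{stable} node on $E_{9}$ (where $d>0$) and \emph{unstable} on $E_{10}$ (where $d<0$), which contradicts its own Proposition 4.5(a) and the sign of the trace $d/2$; the correct assignment is the reverse, but the substance of the paper's argument --- that the stability of $P_{1}$ flips between the two regions --- is unaffected. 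If you redo the proof, the short route is simply: for $b>0$ the determinant $3b/2$ of the Jacobian at $P_{1}$ is positive, so $P_{1}$ is a node whose stability is governed by the sign of $d$, and that sign is what separates $E_{9}$ from $E_{10}$.
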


\begin{proof}\rm Let $P_{1}:(0,0)$ of proposition $(4.5)$. If $(b,c,d)\in E_{9}$ then $P_{1}$ is a stable node. Now, when $(b,c,d)\in E_{10}$ the point $P_{1}$ is a unstable node. Therefore, regions $E_{9}$ and $E_{10}$ are local bifurcations for the system (\ref{familia5})
\end{proof}

\begin{prop}\rm Let sets $E_{11}$ and $E_{12}$ are local bifurcations for the system (\ref{familia5})
\end{prop}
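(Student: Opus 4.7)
The plan is to mirror the structure of the preceding proof (the one for $E_{9}$ and $E_{10}$) by appealing to Proposition~4.5 and tracking how the stability type of the critical point $(0,0)$ of family~(\ref{familia5}) changes as the parameter triple $(b,c,d)$ moves across the hyperplane $d=0$ that separates $E_{11}$ from $E_{12}$.

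First I would observe that both regions lie inside $R_{1}$, so the discriminant $d^{2}-24b$ is strictly positive and the eigenvalues of $\mathcal{M}(0,0)$,
$$\lambda_{1,2}=\tfrac{1}{4}\bigl[d\pm\sqrt{d^{2}-24b}\bigr],$$
are real and distinct in both regions, with product $\tfrac{3b}{2}$ and sum $\tfrac{d}{2}$. On the subregion where $b>0$ the product is positive, so by Theorem~\ref{3}(b) the origin is a node whose stability is dictated by the sign of the common real part, hence by the sign of the trace $d/2$.

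Next I would invoke Proposition~4.5(a): in $E_{11}$ one has $d<0$, which gives $\lambda_{1},\lambda_{2}<0$ and therefore an asymptotically stable node at the origin; in $E_{12}$ one has $d>0$, which gives $\lambda_{1},\lambda_{2}>0$ and therefore an unstable node. This qualitative change of stability of an equilibrium as the parameter crosses $d=0$ is exactly a local bifurcation in the sense of the subsection \textbf{Codimension--One Bifurcations}, so the conclusion follows for family~(\ref{familia5}) just as in the $E_{9},E_{10}$ case.

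The main (small) obstacle is the complementary sign choice $b<0$: there $\lambda_{1}\lambda_{2}<0$, so $(0,0)$ is a saddle on both sides and the bifurcation must instead be read off the other critical point $\bigl(-\tfrac{3b}{2c},0\bigr)$, whose Jacobian $\mathcal{M}\bigl(-\tfrac{3b}{2c},0\bigr)$ has determinant $-\tfrac{3b}{2}>0$ and trace $\tfrac{d}{2}$. Since $d^{2}+24b>d^{2}-24b>0$ on the relevant subregion, the eigenvalues are again real, and the same trace-sign argument applied via Theorem~\ref{3}(b) shows that this second equilibrium switches from stable node in $E_{11}$ to unstable node in $E_{12}$. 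In either case the passage from $E_{11}$ to $E_{12}$ produces a topological change at one of the equilibria, so $E_{11}$ and $E_{12}$ are local bifurcation sets for the system~(\ref{familia5}).
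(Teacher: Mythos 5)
Your argument follows the same route as the paper: invoke Proposition 4.5's eigenvalue computation and observe that the stability of an equilibrium flips as $(b,c,d)$ crosses the hyperplane $d=0$ separating $E_{11}$ from $E_{12}$. The paper's own proof is terser and looks only at $P_{2}=(-\tfrac{3b}{2c},0)$, asserting it is an unstable node in $E_{12}$ and a stable node in $E_{11}$; your version is actually more careful in splitting on the sign of $b$, since $P_{2}$ is a node only when $-\tfrac{3b}{2}>0$, i.e.\ $b<0$, while for $b>0$ it is the origin that carries the node. One slip to fix: in your $b<0$ case you write $d^{2}+24b>d^{2}-24b>0$, but for $b<0$ this inequality is reversed ($d^{2}+24b<d^{2}-24b$), and indeed $d^{2}+24b$ can be negative, in which case $P_{2}$ is a focus rather than a node. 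This does not break the conclusion — the real part of the eigenvalues at $P_{2}$ is $d/4$ in either case, so the equilibrium still switches from attracting in $E_{11}$ to repelling in $E_{12}$ — but the classification ``node'' should be replaced by ``node or focus according to the sign of $d^{2}+24b$.'' With that correction your proof is complete and, if anything, patches a gap the paper leaves open.
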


\begin{proof}\rm Let $P_{2}:(\frac{3b}{2c},0)$ of proposition $(4.5)$. If $(b,c,d)\in E_{12}$ then $P_{2}$ is a unstable node. Now, when $(b,c,d)\in E_{11}$ the point $P_{2}$ is a stable node. Therefore, regions $E_{11}$ and $E_{12}$ are local bifurcations for the system (\ref{familia5})
\end{proof}

\begin{figure}[htb]
	\centering
		\includegraphics[width=60mm]{"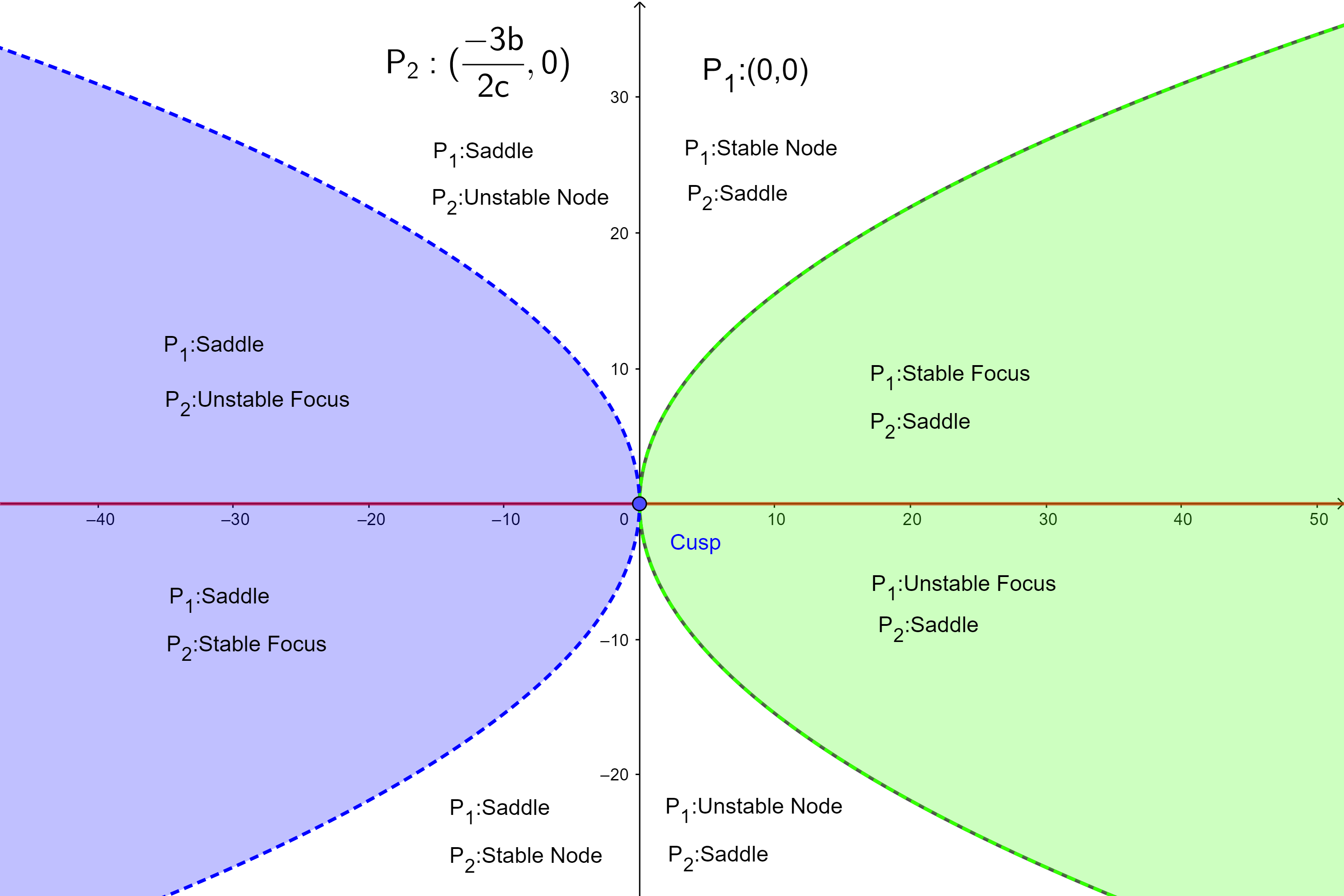"}
		\caption{\eqref{familia5}, $c>0$.}
		\label{fig:figura1}
 	\end{figure}
	
	\begin{figure}[htb]
	\centering
		\includegraphics[width=60mm]{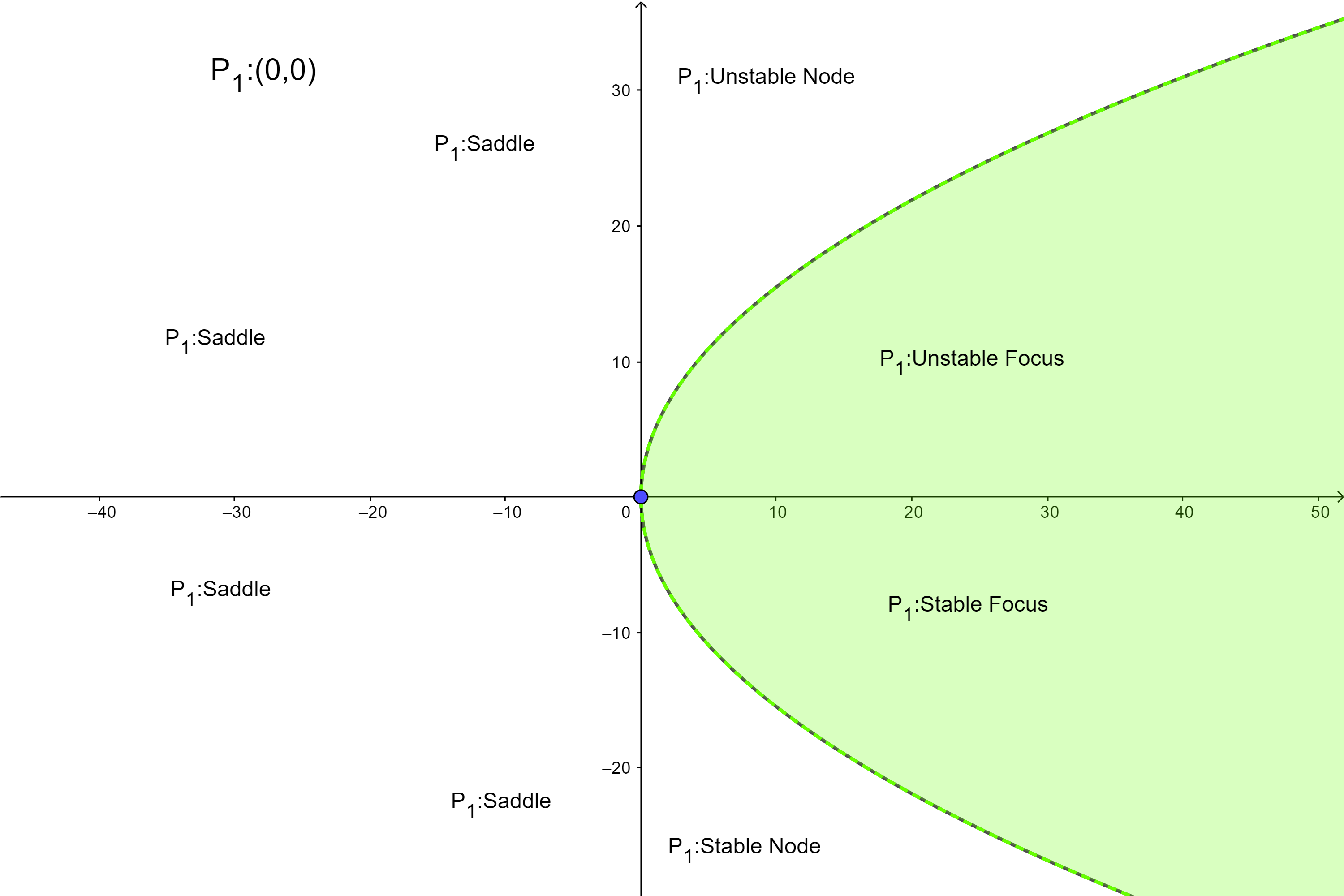}
		\caption{\eqref{familia5}, $c=0$.}
		\label{fig:figura2}

	\centering
		\includegraphics[width=60mm]{"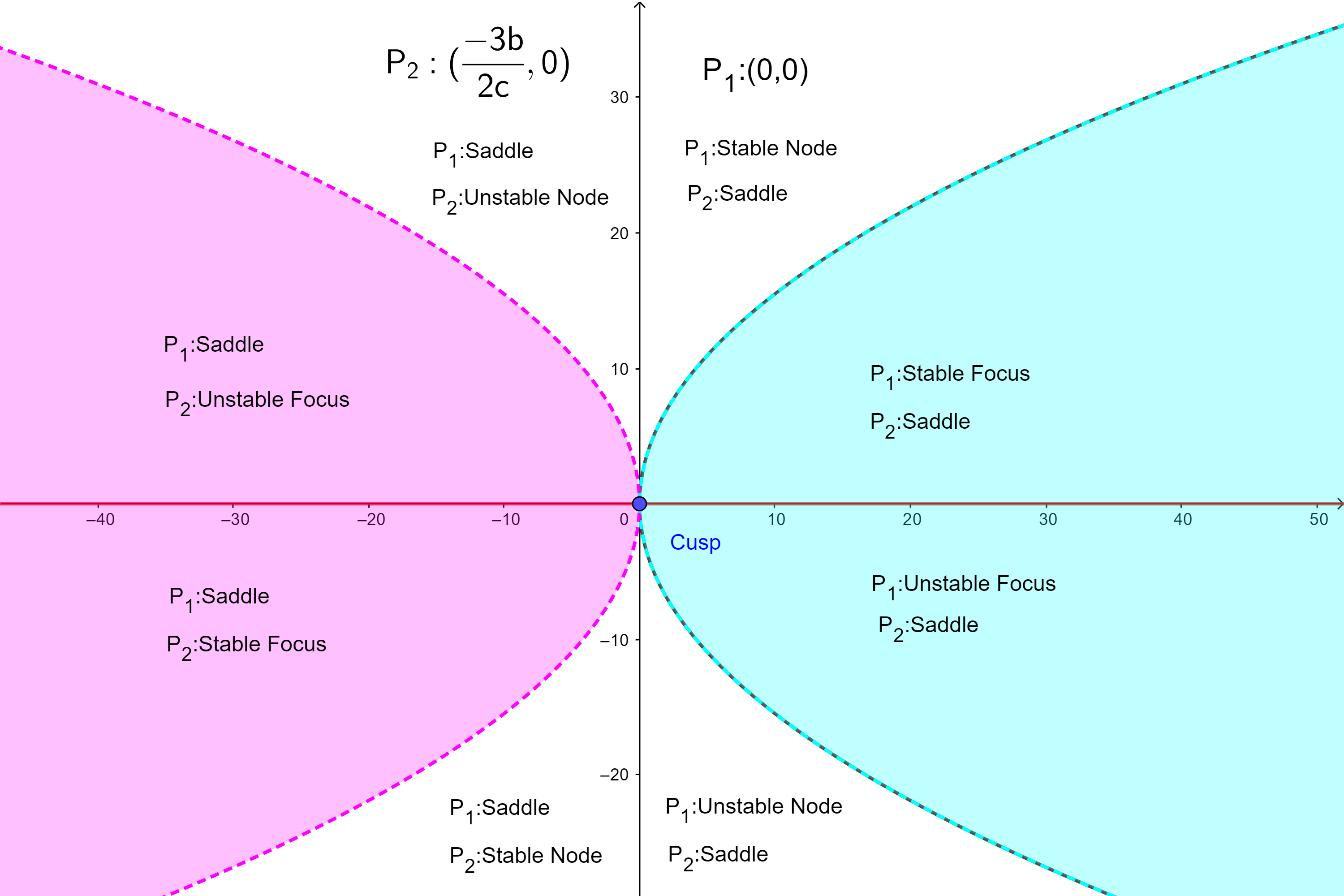"}
		\caption{\eqref{familia5}, $c<0$.}
		\label{fig:figura3}
	\end{figure}
	
	\begin{figure}[htb]
		\centering
		\includegraphics[width=60mm]{"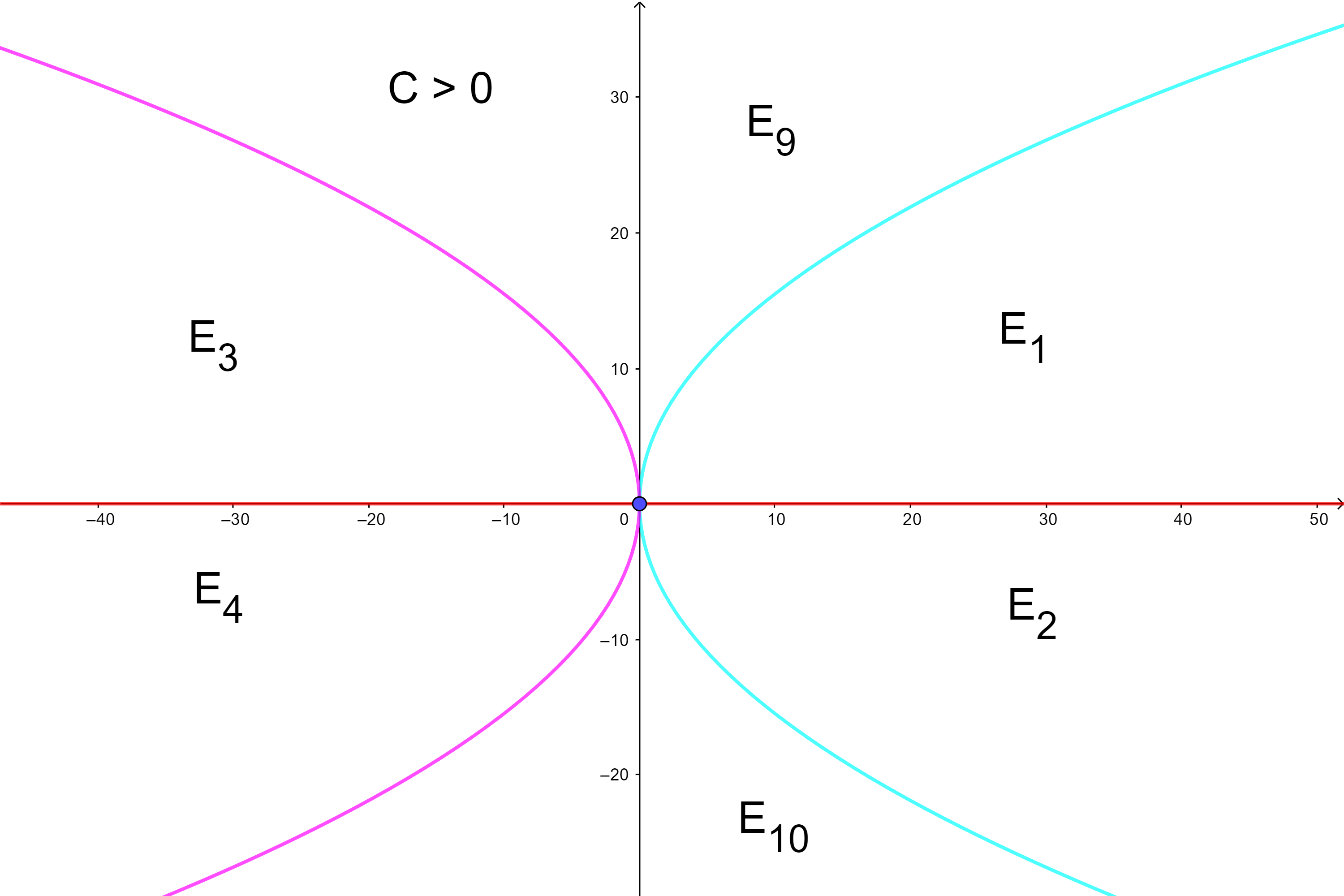"}
		\caption{\eqref{familia5}, $c>0$.}
		\label{fig:figura4}
	\end{figure}
	
	\begin{figure}[htb]
	\centering
		\includegraphics[width=60mm]{"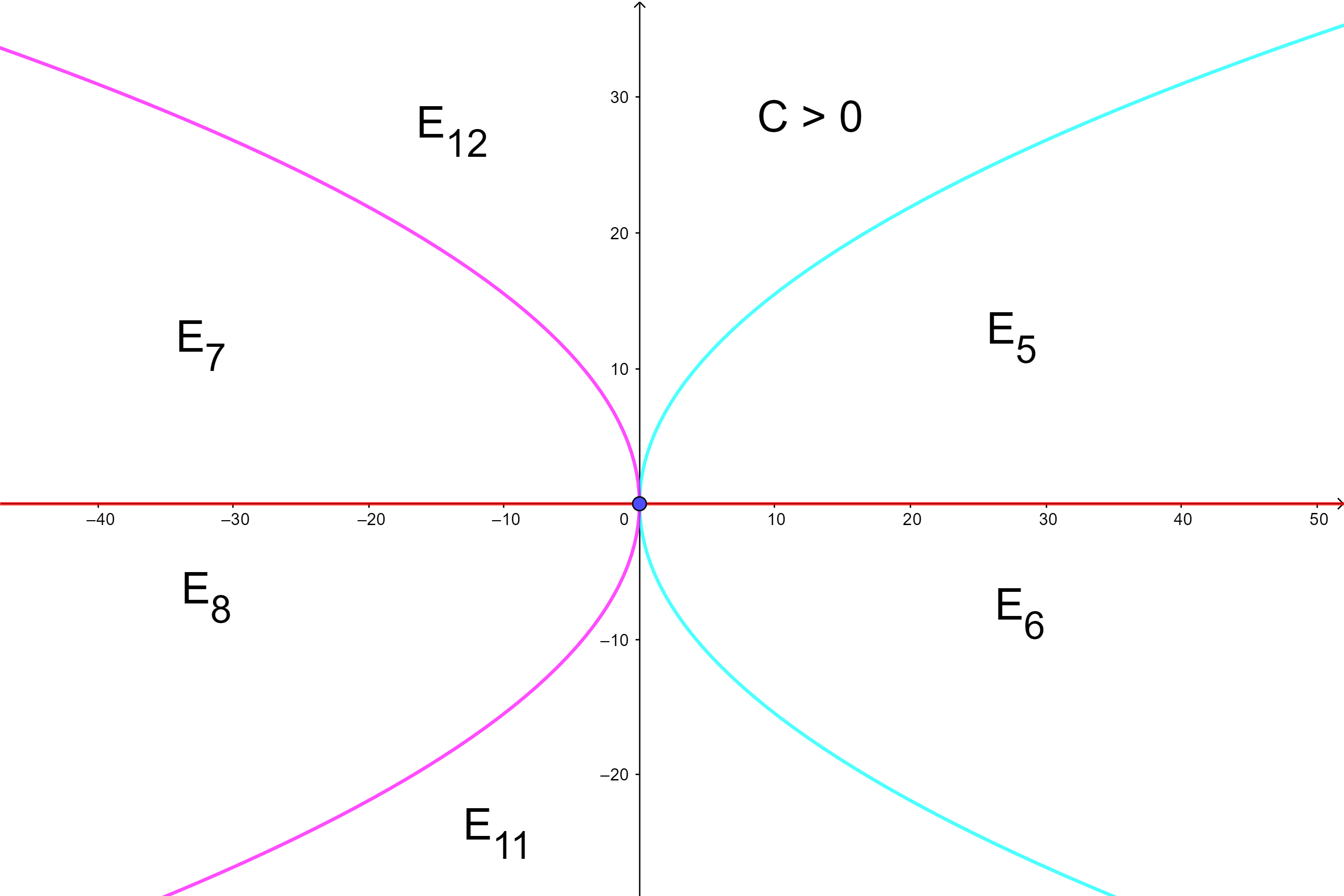"}
		\caption{\eqref{familia5}, $c<0$.}
		\label{fig:figura5}
	\end{figure}
	
	\begin{figure}[htb!]
	\centering
		\includegraphics[width=60mm]{"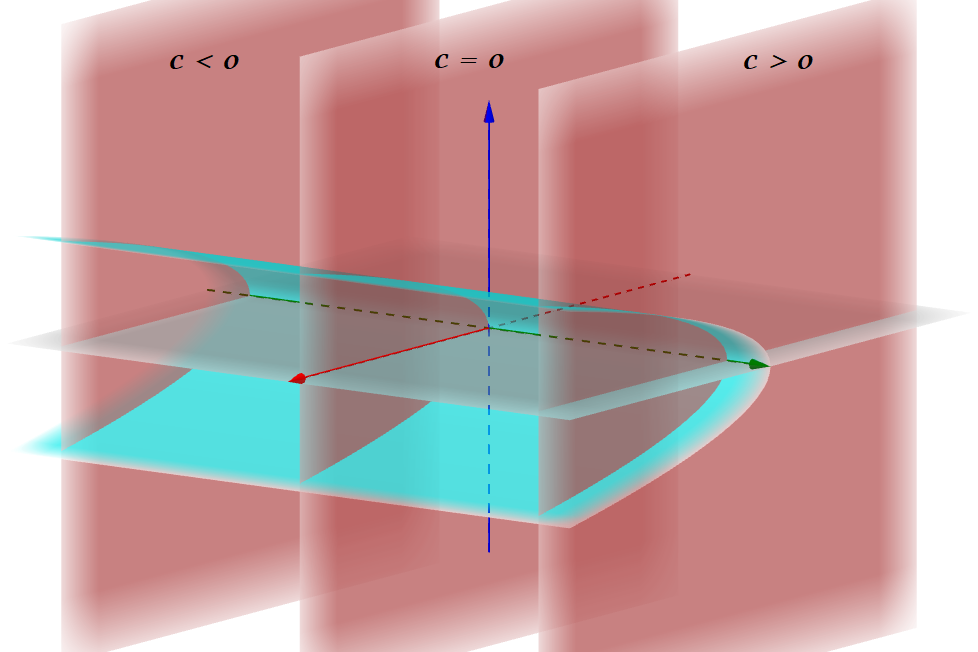"}
		\caption{\eqref{familia5}, $c<0$.}
		\label{fig:figura6}
	\end{figure}

\newpage
\section{Infinite Plane}
\subsection{Family I}\rm
\textbf{In the Chart $U_{1}$} the associated system \ref{familia1} : 
\begin{equation}\rm
	\left\{
	\begin{array}{lcl}
		\dot{u}&=& -u^{2}v-c  \\
		\dot{v}&=& -uv^{2} 
	\end{array}\right.
\end{equation}
The system have not critical points at infinite plane.\\
\\
\textbf{In the Chart $U_{2}$} the associated system \ref{familia1}:
\begin{equation}\rm
	\left\{
	\begin{array}{lcl}
		\dot{u}&=& v+cu^{3}  \\
		\dot{v}&=& -cu^{2}v \label{2.9}
	\end{array}\right.
\end{equation}
\begin{prop} The point $(0,0)$ is an stable node if $c<0$ and unstable if $c>0$.
\end{prop}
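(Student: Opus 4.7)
Since the Jacobian of the vector field at $(0,0)$ is the nilpotent matrix $\left[\begin{array}{cc}0 & 1\\0 & 0\end{array}\right]$, both eigenvalues are zero and the hyperbolic Theorem \ref{3} does not apply. My plan is to invoke instead the Non-Hyperbolic Singular Points Theorem (Theorem \ref{5}), for which the system is already written in the required form $\dot u = v + A(u,v)$, $\dot v = B(u,v)$ with $A(u,v) = cu^{3}$ and $B(u,v)$ a constant multiple of $cu^{2}v$; both are analytic and vanish together with their first partial derivatives at the origin.

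The procedure is then standard: first I would solve the implicit equation $v + A(u,v) = 0$ for the auxiliary graph $v = f(u) = -cu^{3}$ (the expansion of $f$ starts at order three, which is admissible since the theorem only requires $f(0)=f'(0)=0$). Substituting back, I would form the two series prescribed by the theorem,
\[
F(u) = B(u, f(u)), \qquad G(u) = \left(\partial_u A + \partial_v B\right)(u, f(u)),
\]
and read off $F(u) = a\,u^{5}+\cdots$ and $G(u) = b\,u^{2}+\cdots$ with $a$ and $b$ non-zero multiples of $c^{2}$ and $c$, respectively. This identifies the indices $m=5$ and $n=2$ in the notation of Theorem \ref{5}.

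The decisive step is matching these data against the case table in Theorem \ref{5}(2). Because $m$ is odd, $n$ is even, and the borderline equality $m = 2n+1$ holds, the applicable branch is selected by the discriminant $b^{2} + 4a(n+1) = b^{2} + 12a$, which turns out to be a non-negative multiple of $c^{2}$; this places us squarely in sub-case (iii.c), whose conclusion is a node that is attracting when $b<0$ and repelling when $b>0$. Since $b$ has the sign of $c$, the dichotomy \emph{stable for $c<0$, unstable for $c>0$} follows at once. The main obstacle is taxonomic rather than computational: Theorem \ref{5}(2) has numerous sibling branches separated by parities, strict-versus-non-strict inequalities, and a discriminant sign, and the borderline equality $m = 2n+1$ together with the parity of $n$ must be checked with care to avoid misclassifying $(0,0)$ as, for instance, a saddle or a saddle-node.
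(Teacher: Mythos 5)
Your strategy is exactly the paper's: both of you apply the Non-Hyperbolic Singular Points Theorem (Theorem \ref{5}) to the chart-$U_{2}$ system, solve $v+A(u,v)=0$ to get $f(u)=-cu^{3}$, form $F$ and $G$, identify $m=5$, $n=2$, and land in case (2)(iii.c) to conclude a node whose stability is governed by the sign of $b=4c$, hence of $c$. There is no methodological divergence to report.

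There is, however, one genuine gap, and it sits at the only place where the classification could actually go wrong. Branch (2)(iii) of Theorem \ref{5} is reserved for $m$ odd \emph{and} $a<0$; if instead $a>0$, branch (2)(ii) applies and the origin is a saddle, regardless of $n$, the parity conditions, or the discriminant. You only record that $a$ is ``a non-zero multiple of $c^{2}$,'' which leaves both signs open, and you are similarly noncommittal about $B(u,v)$ (``a constant multiple of $cu^{2}v$''). This is not pedantry: if you take the displayed system \eqref{2.9} at face value, $B(u,v)=-cu^{2}v$, whence $F(u)=B(u,f(u))=(-cu^{2})(-cu^{3})=+c^{2}u^{5}$, i.e.\ $a=+c^{2}>0$, and the origin would be a saddle, contradicting the proposition. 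The resolution is that the sign in the displayed equation \eqref{2.9} is a slip: carrying out the chart-$U_{2}$ reduction \eqref{inf2} with $P=y$, $Q=-cx^{2}$, $d=2$ gives $\dot v=-v^{3}Q\left(\frac{u}{v},\frac{1}{v}\right)=+cu^{2}v$, so $B(u,v)=cu^{2}v$, $F(u)=-c^{2}u^{5}$, $a=-c^{2}<0$, and $G(u)=(3c+c)u^{2}=4cu^{2}$. Only then do the remaining checks $m=2n+1$ and $b^{2}+4a(n+1)=16c^{2}-12c^{2}=4c^{2}\geq 0$, with $n$ even, place you in (iii.c) and yield the attracting node for $b=4c<0$ and the repelling node for $4c>0$. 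You need to make this sign determination explicit; as written, your argument does not exclude the saddle you yourself warn against.
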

\begin{proof}\rm The critical points associated with the system (\ref{2.9}) is $P:(0,0)$.\\
	Jacobian matrix is:
	\begin{center}
		$\mathcal{M}(u,v)=\left[
		\begin{array}{lc}
		3cu^{2} & 1 \\
		-2cuv & -cu^{2}
		\end{array}\right]$
	\end{center}
	Then,
	\begin{center}
		$\mathcal{M}(0,0)=\left[
		\begin{array}{cl}
		0 & 1 \\
		0 & 0
		\end{array}\right]$
	\end{center}
	We see that $\lambda^{2}=0$. According to the Theorem (\ref{5}), let $v+A(u,v)=0$ a solution of $v+A(u,v)=0$, where $A(u,v)=cu^{3}$ then $v=-cu^{3}$, also we have that $B(u,v)=cu^{2}v$, so $F(u)=-c^{2}u^{5}$ and $G(x)=4cu^{2}$ then $m=5$, $n=2$, $a=-c^{2}$, $b=4c$ and $m=2n+1$, furthermore $b^{2}+4a(n+1)\geq0$ . Therefore the origin of the system (\ref{2.9}) in infinite plane is an stable node if $c<0$ and unstable if $c>0$.
\end{proof}
\begin{center}
	For a more detailed study of the system see figure on the Poincaré sphere (\ref{fig:figura1}).
\end{center}

\subsection{Family II}
\textbf{In the Chart $U_{1}$} the associated system (\ref{familia2}):
\begin{equation}\rm
	\left\{
	\begin{array}{lcl}
		\dot{u}&=& -u^{2}v+2b  \\
		\dot{v}&=& -uv^{2}
	\end{array}\right.
\end{equation}
The system have not critical points at infinite plane.\\
\\
\textbf{In the Chart $U_{2}$} the associated system  (\ref{familia2}).
\begin{equation}\rm
	\left\{
	\begin{array}{lcl}
		\dot{u}&=& v-2bu^{2}  \\
		\dot{v}&=& -2buv \label{2.10}
	\end{array}\right.
\end{equation}
\begin{prop}\rm The point $(0,0)$ have one hyperbolic and one elliptic sector.
\end{prop}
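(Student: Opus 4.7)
The plan is to apply the Non-Hyperbolic Singular Points Theorem (Theorem \ref{5}) to the system (\ref{2.10}), since the Jacobian at the origin is nilpotent with both eigenvalues equal to zero, so the hyperbolic theorem does not apply. To avoid a notational clash between the system parameter $b$ and the coefficient called $b$ in the hypotheses of Theorem \ref{5}, I will temporarily denote the system parameter by $\beta$.

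First I read off $A(u,v)=-2\beta u^{2}$ and $B(u,v)=-2\beta u v$, and solve $v+A(u,v)=0$ locally at the origin to obtain the graph $v=f(u)=2\beta u^{2}$. Substituting into $B$ gives
\begin{equation*}
F(u)=B(u,f(u))=-2\beta u\cdot 2\beta u^{2}=-4\beta^{2}u^{3},
\end{equation*}
so in the language of Theorem \ref{5} we have $m=3$ with leading coefficient $a=-4\beta^{2}<0$. Next I compute the divergence along the curve,
\begin{equation*}
G(u)=\left(\frac{\partial A}{\partial u}+\frac{\partial B}{\partial v}\right)(u,f(u))=-4\beta u-2\beta u=-6\beta u,
\end{equation*}
which gives $n=1$ and leading coefficient (the theorem's) $-6\beta$.

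Finally I feed these numbers into Theorem \ref{5}, case (2)(iii): the exponent $m=3$ is odd and $a<0$, so I land in one of the subcases (iii.a)--(iii.c). Because $m=3=2n+1$, the boundary case, I must verify the discriminant condition, and indeed $(-6\beta)^{2}+4(-4\beta^{2})(n+1)=36\beta^{2}-32\beta^{2}=4\beta^{2}\geq 0$. Since $n=1$ is odd, this places the origin in case (iii.b), which concludes that the phase portrait at $(0,0)$ consists of exactly one hyperbolic and one elliptic sector, as asserted. The only non-routine issue is the bookkeeping at the boundary $m=2n+1$; luckily the discriminant $4\beta^{2}$ is nonnegative for every $\beta$, so the conclusion holds uniformly (for $\beta\neq 0$, the system being trivial otherwise).
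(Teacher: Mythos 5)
Your proof is correct and follows essentially the same route as the paper: apply the Non-Hyperbolic Singular Points Theorem to system \eqref{2.10}, compute $F(u)=-4b^{2}u^{3}$ and $G(u)=-6bu$, and land in case (2)(iii.b) with $m=3=2n+1$ and nonnegative discriminant. In fact your write-up is slightly more careful than the paper's, which misstates the solution of $v+A(u,v)=0$ as $v=-2bu^{2}$ (the correct branch is $v=2bu^{2}$, as you have it) and omits the explicit evaluation of the discriminant $4b^{2}\geq 0$.
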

\begin{proof}\rm The critical points associated with the system (\ref{2.10}) is $P:(0,0)$.\\
	Jacobian matrix is:
	\begin{center}
		$\mathcal{M}(u,v)=\left[
		\begin{array}{lc}
		-4bu & 1 \\
		-2bv & -2bu
		\end{array}\right]$
	\end{center}
	Then,
	\begin{center}
		$\mathcal{M}(0,0)=\left[
		\begin{array}{cl}
		0 & 1 \\
		0 & 0
		\end{array}\right]$
	\end{center}
	We see that, $\lambda^{2}=0$. According to the Theorem (\ref{5}), let $v+A(u,v)=0$  a solution of $v+A(u,v)=0$, where $A(u,v)=-2bu^{2}$ then $v=-2bu^{2}$, also we have that $B(u,v)=-2buv$, so $F(u)=-4b^{2}u^{3}$ and $G(x)=-6bu$ then $m=2n+1$ and $b^{2}+4a(n+1)$. Therefore the origin of the system (\ref{2.10}) in infinite plane have one hyperbolic and one elliptic sector.
\end{proof}
\begin{center}
	For a more detailed study of the system see figure on the Poincaré sphere (\ref{fig:figura2}).
\end{center}

\subsection{Family III}
\textbf{In the Chart $U_{1}$} the associated system (\ref{familia3}):
\begin{equation}\rm
	\left\{
	\begin{array}{lcl}
		\dot{u}&=& -u^{2}v+2a  \\
		\dot{v}&=& -uv^{2}
	\end{array}\right.
\end{equation}
The system have not critical points at infinite plane.\\
\\
\textbf{In the Chart $U_{2}$} the associated system (\ref{familia3}).
\begin{equation}\rm
	\left\{
	\begin{array}{lcl}
		\dot{u}&=& v-2au^{2}  \\
		\dot{v}&=& -2auv \label{2.11}
	\end{array}\right.
\end{equation}
\begin{prop}\rm The point $(0,0)$  have one hyperbolic and one elliptic sector.
\end{prop}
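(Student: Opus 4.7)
The plan is to apply the Non-Hyperbolic Singular Points Theorem (Theorem \ref{5}) to the system at $(u,v)=(0,0)$, exactly as was done for Family II. First I would compute the Jacobian
\[
\mathcal{M}(0,0)=\begin{pmatrix} 0 & 1 \\ 0 & 0 \end{pmatrix},
\]
which has the double zero eigenvalue $\lambda^2=0$, so the hyperbolic theorem does not apply and we must put the system in the normal form $\dot u = v + A(u,v)$, $\dot v = B(u,v)$ required by Theorem \ref{5}. Reading off from (\ref{2.11}) gives $A(u,v)=-2au^2$ and $B(u,v)=-2auv$.

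Next I would solve $v+A(u,v)=0$ for the invariant-curve approximation: this yields $v=f(u)=2au^2$. Substituting into $B$ produces the series
\[
F(u)=B(u,f(u))=-2au\cdot 2au^2=-4a^2u^3,
\]
so in the notation of the theorem $m=3$ and $a_{\text{coef}}=-4a^2$. Similarly
\[
G(u)=\Bigl(\tfrac{\partial A}{\partial u}+\tfrac{\partial B}{\partial v}\Bigr)(u,f(u))=-4au-2au=-6au,
\]
hence $n=1$ and $b_{\text{coef}}=-6a$.

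Finally I would check which branch of part (2) of Theorem \ref{5} we fall into. Here $m=3$ is odd and $a_{\text{coef}}=-4a^2<0$, so we are in case (2.iii). Since $m=2n+1=3$, we must test the sign of $b_{\text{coef}}^2+4a_{\text{coef}}(n+1)=36a^2+4(-4a^2)(2)=4a^2\geq 0$. Because $n=1$ is odd, this places us precisely in case (2.iii.b), which concludes that the origin consists of one hyperbolic and one elliptic sector, as claimed.

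There is essentially no obstacle: the argument is a direct, line-by-line mimic of the Family II computation, and the only ``choice'' is to keep track of signs carefully so that $f(u)=2au^2$ (not $-2au^2$) and the invariant $b_{\text{coef}}^2+4a_{\text{coef}}(n+1)$ is evaluated correctly. The case distinction in Theorem \ref{5} then forces the conclusion regardless of the sign of $a$, so no additional regions in parameter space need to be treated separately.
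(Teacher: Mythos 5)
Your proposal is correct and follows essentially the same route as the paper: reduce to the nilpotent linear part, apply the Non-Hyperbolic Singular Points Theorem with $A(u,v)=-2au^2$, $B(u,v)=-2auv$, obtain $F(u)=-4a^2u^3$, $G(u)=-6au$, and land in case (2.iii.b). Your version is in fact slightly more careful than the paper's, which writes $v=-2au^2$ for the solution of $v+A(u,v)=0$ (a sign typo, since the subsequent $F$ matches $f(u)=2au^2$) and never explicitly evaluates $b^2+4a(n+1)=4a^2\geq 0$ as you do.
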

\begin{proof}\rm The critical points associated with the system (\ref{2.11}) is $P:(0,0)$.\\
	Jacobian matrix is:
	\begin{center}
		$\mathcal{M}(u,v)=\left[
		\begin{array}{lc}
		-4au & 1 \\
		-2av & -2au
		\end{array}\right]$
	\end{center}
	Then,
	\begin{center}
		$\mathcal{M}(0,0)=\left[
		\begin{array}{cl}
		0 & 1 \\
		0 & 0
		\end{array}\right]$
	\end{center}
	We see that, $\lambda^{2}=0$. According to the Theorem (\ref{5}), let $v+A(u,v)=0$  a solution of $v+A(u,v)=0$, where $A(u,v)=-2au^{2}$ then $v=-2au^{2}$, also we have that $B(u,v)=-2auv$, so $F(u)=-4a^{2}u^{3}$ y $G(x)=-6au$ then $m=2n+1$ and $b^{2}+4a(n+1)$. Therefore the origin of the system (\ref{2.11}) in infinite plane have one hyperbolic and one elliptic sector.
\end{proof}
\begin{center}
For a more detailed study of the system see figure on the Poincaré sphere (\ref{fig:figura3}).
\end{center}

\subsection{Family IV}
Let $d=a\left(p+4\right)$.\\
\\
\textbf{In the Chart $U_{1}$} the associated system (\ref{familia4}).
\begin{equation}\rm
	\left\{
	\begin{array}{cl}
		\dot{u}&= -u^{2}v+\frac{duv}{2}-\frac{3a^{2}v}{2}-c \\
		\dot{v}&= -uv^{2}
	\end{array}\right.
\end{equation}
The system have not critical points at infinite plane.\\
\\
\textbf{In the Chart $U_{2}$} the associated system (\ref{familia4}).
\begin{equation}\rm
	\left\{
	\begin{array}{lcl}
		\dot{u}&=& v-\frac{duv}{2}+\frac{3}{2}a^{2}u^{2}v+cu^{3}\\
		\dot{v}&=& -\frac{dv^{2}}{2}+\frac{3}{2}a^{2}uv^{2}+cu^{2}v
	\end{array}\right.\label{2.12}
\end{equation}
\begin{prop}\rm The point $(0,0)$ is an stable node if $c<0$ and unstable if $c>0$.
\end{prop}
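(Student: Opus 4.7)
The plan is to apply the Non-Hyperbolic Singular Points Theorem (Theorem~\ref{5}), mirroring the structure of the arguments used previously for Families I, II and III, since the Jacobian at $(0,0)$ of system~\eqref{2.12} coincides with $\left(\begin{smallmatrix} 0 & 1 \\ 0 & 0\end{smallmatrix}\right)$ and both eigenvalues vanish, so linearization gives no information.

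First I would identify the system in the canonical form $\dot u = v+A(u,v)$, $\dot v = B(u,v)$ required by Theorem~\ref{5}, reading off
\[
A(u,v) = -\tfrac{d}{2}uv + \tfrac{3}{2}a^{2}u^{2}v + cu^{3}, \qquad B(u,v) = -\tfrac{d}{2}v^{2} + \tfrac{3}{2}a^{2}uv^{2} + cu^{2}v.
\]
Next I would solve $v+A(u,v)=0$ for $v$ as a formal power series $v=f(u)=a_{2}u^{2}+a_{3}u^{3}+\cdots$; since $1-\tfrac{d}{2}u+\tfrac{3}{2}a^{2}u^{2}$ is a unit in $\mathbb{R}[[u]]$ with constant term $1$, the implicit function procedure yields $f(u)=-cu^{3}+\mathcal{O}(u^{4})$.

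Then I would substitute $f$ into $B$ to compute $F(u)=B(u,f(u))$. Because $B$ is divisible by $v$, every term of $B(u,f(u))$ carries a factor of $f(u)=\mathcal{O}(u^{3})$, so the dominant contribution comes from the term $cu^{2}v$ evaluated at $v=-cu^{3}$, giving $F(u)=-c^{2}u^{5}+\mathcal{O}(u^{6})$; hence $m=5$ and $a=-c^{2}<0$. In parallel, I would compute the divergence $G(u)=(\partial_{u}A+\partial_{v}B)(u,f(u))$; the terms $3cu^{2}$ (from $\partial_{u}A$) and $cu^{2}$ (from $\partial_{v}B$) are the lowest-order contributions surviving after substitution, yielding $G(u)=4cu^{2}+\mathcal{O}(u^{3})$, so $n=2$ and $b=4c$.

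Finally, I would check the hypotheses of case (2)(iii)(c) of Theorem~\ref{5}: $m=5$ is odd, $a=-c^{2}<0$, $m=2n+1=5$, and the discriminant-type quantity is $b^{2}+4a(n+1)=16c^{2}-12c^{2}=4c^{2}\geq 0$; moreover $n=2$ is even. The theorem then declares the origin to be a node, attracting when $b=4c<0$ and repelling when $b=4c>0$, which is exactly the claim. The only real obstacle is bookkeeping: one must verify that the truncation used in computing $f$ is accurate enough so that the leading coefficients of $F$ and $G$ are not corrupted by higher-order terms of $f$, but since $B$ vanishes to order $v$ (and in fact to order $u^{2}v$ for the relevant piece) while $f=\mathcal{O}(u^{3})$, the corrections occur at order $u^{6}$ and $u^{3}$ respectively, well beyond the leading terms needed.
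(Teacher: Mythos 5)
Your proposal is correct and follows essentially the same route as the paper: compute the Jacobian at the origin, find $f(u)=-cu^{3}+\cdots$, obtain $F(u)=-c^{2}u^{5}+\cdots$ and $G(u)=4cu^{2}+\cdots$, and invoke case (2)(iii.c) of the Non-Hyperbolic Singular Points Theorem. You are in fact more careful than the paper, which records $G(u)=cu^{2}+\cdots$ (a slip, though it then correctly uses $b=4c$) and does not explicitly verify the conditions $m=2n+1$, $b^{2}+4a(n+1)\geq 0$ and $n$ even.
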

\begin{proof} The critical points associated with the system (\ref{2.12}) is $P:(0,0)$.\\
	\\
	Jacobian matrix:
	\begin{center}
		$\mathcal{M}(u,v)=\left[
		\begin{array}{lc}
		-\frac{dv}{2}+3a^{2}v+3cu^{2} & 1-\frac{du}{2}+\frac{3}{2}a^{2}u^{2} \\
		\frac{3}{2}a^{2}v^{2} & -dv+3a^{2}uv+cu^{2}
		\end{array}\right]$
	\end{center}
	Then,
	\begin{center}
		$\mathcal{M}(0,0)=\left[
		\begin{array}{cl}
		0 & 1 \\
		0 & 0
		\end{array}\right]$
	\end{center}
	We see that, $\lambda^{2}=0$. According to the Theorem (\ref{5}), Let  $v=f(u)$ a solution of $v+A(u,v)=0$ where $v=f(u)=-cu^{3}+\ldots$ an approximation of the Taylor series solution, furthermore $B(u,v)=-\frac{dv^{2}}{2}+\frac{3}{2}a^{2}uv^{2}+cu^{2}v$, then $F(u)=-c^{2}u^{5}+\ldots$ and $G(u)=cu^{2}+\ldots$, so $m=5$,$n=2$,$b=4c$ and $a=-c^{2}$. Therefore the origin in the infinite plane is an stable node if $c<0$ and unstable if $c>0$..
\end{proof}
\begin{center}
	For a more detailed study of the system see figure on the Poincaré sphere (\ref{fig:figura4}).
\end{center}

\subsection{Family V}
Let $d=a\left( s+4 \right)$.\\
\\
\textbf{In the Chart $U_{1}$} the associated system (\ref{familia5}).
\begin{equation}\rm
	\left\{
	\begin{array}{cl}
		\dot{u}&= -u^{2}v+\frac{duv}{2}-\frac{3bv}{2}-c \\
		\dot{v}&= -uv^{2}
	\end{array}\right.
\end{equation}
The system have not critical points at infinite plane.\\
\\
\textbf{In the Chart $U_{2}$} the associated system  (\ref{familia5}
\begin{equation}\rm
	\left\{
	\begin{array}{lcl}
		\dot{u}&=& v-\frac{duv}{2}+\frac{3}{2}bu^{2}v+cu^{3}\\
		\dot{v}&=& -\frac{dv^{2}}{2}+\frac{3}{2}buv^{2}+cu^{2}v \label{2.13}
	\end{array}\right.
\end{equation}
\begin{prop}\rm The point $(0,0)$ is stable node if $c<0$ and unstable if $c>0$.
\end{prop}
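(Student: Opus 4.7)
The plan is to mirror the proof given for Family IV in the chart $U_2$, since system (\ref{2.13}) has exactly the same structural form as (\ref{2.12}) with the constant $\tfrac32 a^2$ replaced by $\tfrac32 b$. First I would verify that the only singularity of (\ref{2.13}) on $v=0$ is the origin and compute the Jacobian at $(0,0)$: the linear part is the nilpotent matrix $\bigl(\begin{smallmatrix}0 & 1\\ 0 & 0\end{smallmatrix}\bigr)$, so both eigenvalues vanish and Theorem \ref{3} does not apply; I invoke Theorem \ref{5} instead.

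To put the system in the normal form of Theorem \ref{5}, set $A(u,v) = -\tfrac{duv}{2} + \tfrac{3}{2}bu^{2}v + cu^{3}$ and $B(u,v) = -\tfrac{dv^{2}}{2} + \tfrac{3}{2}buv^{2} + cu^{2}v$. I would solve $v+A(u,v)=0$ for $v=f(u)$ by a formal power series in $u$ at the origin; since the only term of $A$ independent of $v$ is $cu^3$, the leading behaviour is $f(u) = -cu^{3} + O(u^{4})$, and every correction involves at least one extra factor of $u$.

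Next I would substitute $v=f(u)$ into $B$ to obtain $F(u)=B(u,f(u))$; the dominant contribution is $cu^{2}\cdot(-cu^{3}) = -c^{2}u^{5}$, and the $v^{2}$-terms contribute only at order $u^{6}$ or higher, so $F(u) = -c^{2}u^{5}+\cdots$ with $m=5$ and $a=-c^{2}$. Computing $(\partial_{u}A+\partial_{v}B)(u,v) = -\tfrac{3dv}{2} + 6buv + 4cu^{2}$ and evaluating at $v=f(u)$ gives $G(u)=4cu^{2}+\cdots$, so $n=2$ and $b=4c$.

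Finally I would identify the relevant branch of part (2) of Theorem \ref{5}: $m=5$ is odd with $a<0$, and $m=2n+1$ with $b^{2}+4a(n+1)=16c^{2}-12c^{2}=4c^{2}\geq 0$, while $n=2$ is even; this is precisely case (iii.c), which declares the origin to be a node, attracting when $b<0$ and repelling when $b>0$. Since $b=4c$, this yields a stable node for $c<0$ and an unstable node for $c>0$, as claimed. The only delicate point I foresee is making sure that no terms hidden inside the $\cdots$ of $F$ and $G$ promote the computation into a different subcase of the theorem; checking that the next corrections to $f(u)$ are genuinely $o(u^{3})$ and hence irrelevant to the leading orders $u^{5}$ in $F$ and $u^{2}$ in $G$ is the main bookkeeping obstacle.
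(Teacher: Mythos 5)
Your proposal is correct and follows essentially the same route as the paper: nilpotent linear part, reduction to Theorem \ref{5} via $v=f(u)=-cu^{3}+\cdots$, then $F(u)=-c^{2}u^{5}+\cdots$, $G(u)=4cu^{2}+\cdots$, landing in case (2)(iii.c) with $m=5=2n+1$, $a=-c^{2}$, $b=4c$. Your explicit computation $G(u)=4cu^{2}+\cdots$ is in fact the consistent one (the paper writes $G(u)=cu^{2}+\cdots$ but then uses $b=4c$), and your identification of the exact subcase is a welcome clarification of a step the paper leaves implicit.
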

\begin{proof} The critical points associated with the system (\ref{2.13}) is $P:(0,0)$.\\
	\\
	Jacobian matrix:
	\begin{center}
		$\mathcal{M}(u,v)=\left[
		\begin{array}{lc}
		-\frac{dv}{2}+3buv+3cu^{2} & 1-\frac{du}{2}+\frac{3}{2}bu^{2} \\
		\frac{3}{2}bv^{2}+2cuv & -2dv+3buv+cu^{2}
		\end{array}\right]$
	\end{center}
	Then,
	\begin{center}
		$\mathcal{M}(0,0)=\left[
		\begin{array}{cl}
		0 & 1 \\
		0 & 0
		\end{array}\right]$
	\end{center}
	We see that, $\lambda^{2}=0$. According to the Theorem (\ref{5}), let $v=f(u)$ a solution of $v+A(u,v)=0$, where $v=f(u)=-cu^{3}+\ldots$ approximation of the Taylor series solution, furthermore $B(u,v)=-\frac{dv^{2}}{2}+\frac{3}{2}buv^{2}+cu^{2}v$, then $F(u)=-c^{2}u^{5}+\ldots$ y $G(u)=cu^{2}+\ldots$, so $m=5$,$n=2$,$b=4c$ y $a=-c^{2}$. Therefore the origin in the infinite plane is an stable node if $c<0$ and is a unstable node if $c>0$.
\end{proof}

For a more detailed study of the system which can see figure on the Poincarè sphere (\ref{fig:figura5}) and (\ref{fig:figura6}) .

\section{Global Phase Portrait}
In this section We show the global phase portrait associate to each family:

\begin{figure}[htb!]
	\centering
		\includegraphics[width=50mm]{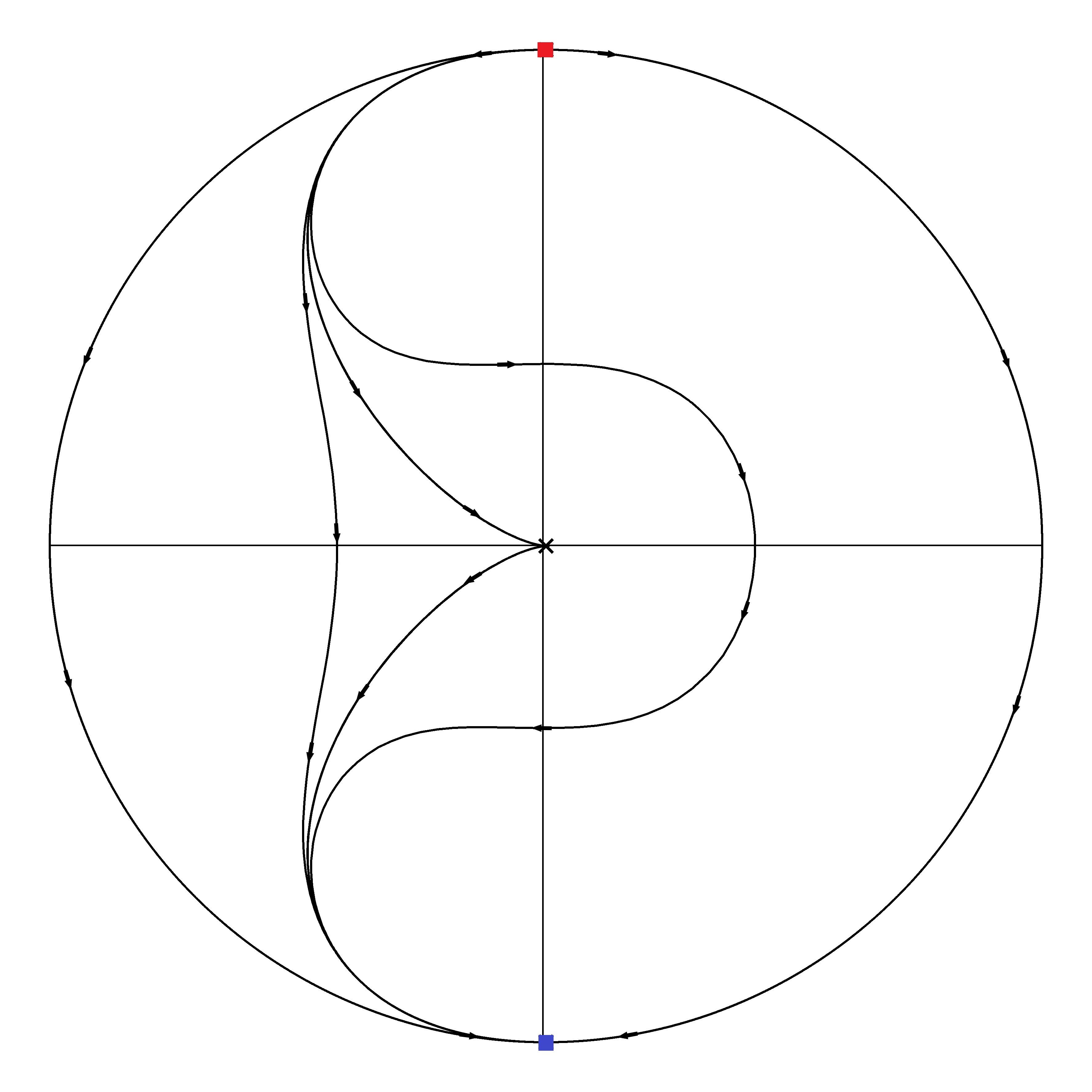}
		\caption{Family I}
		\label{fig:ifigura1}
\end{figure}

\begin{figure}[htb]
	\centering

		\includegraphics[width=40mm]{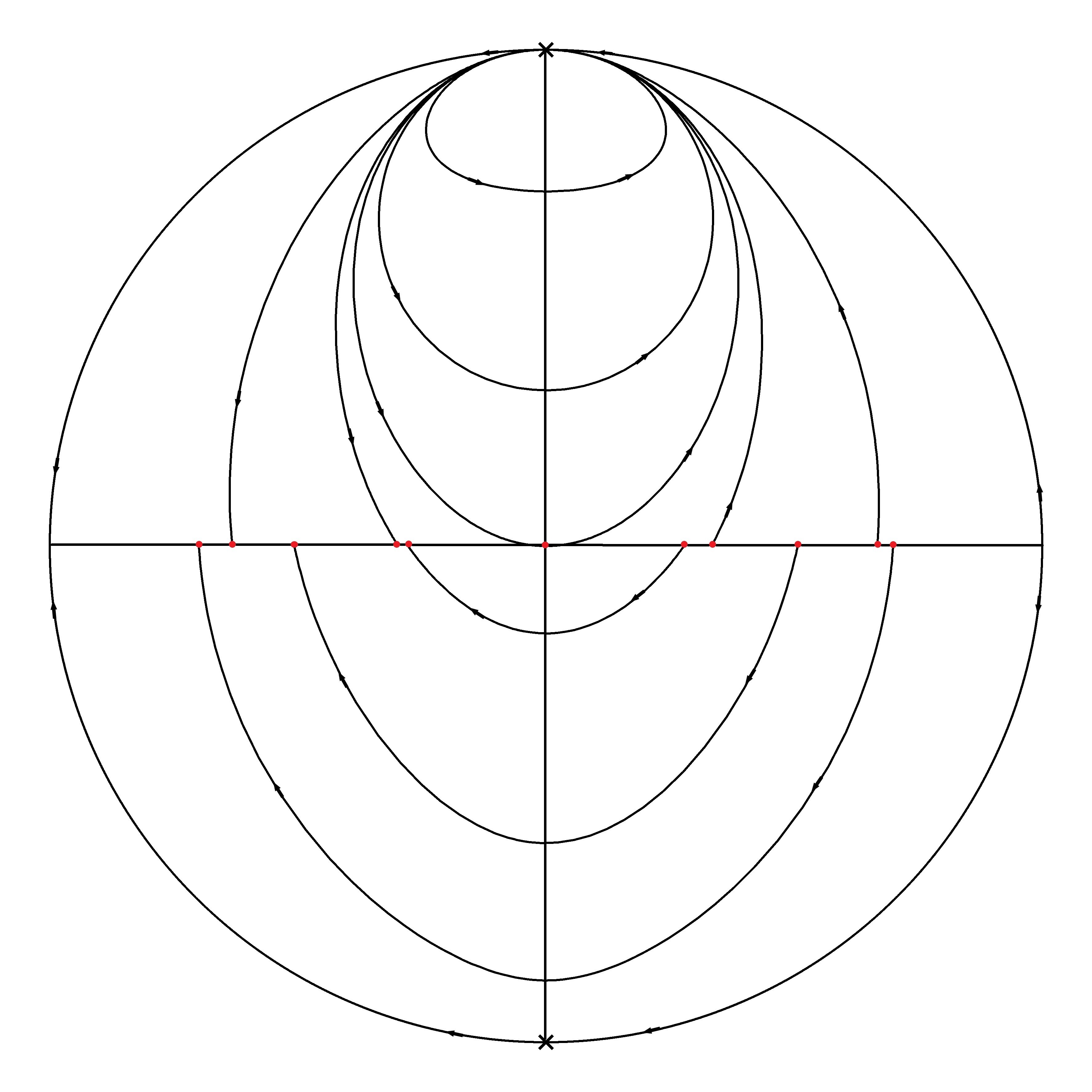}
		\caption{Family II}
		\label{fig:ifigura2}
\end{figure}

\begin{figure}[htb]	
	\centering

		\includegraphics[width=50mm]{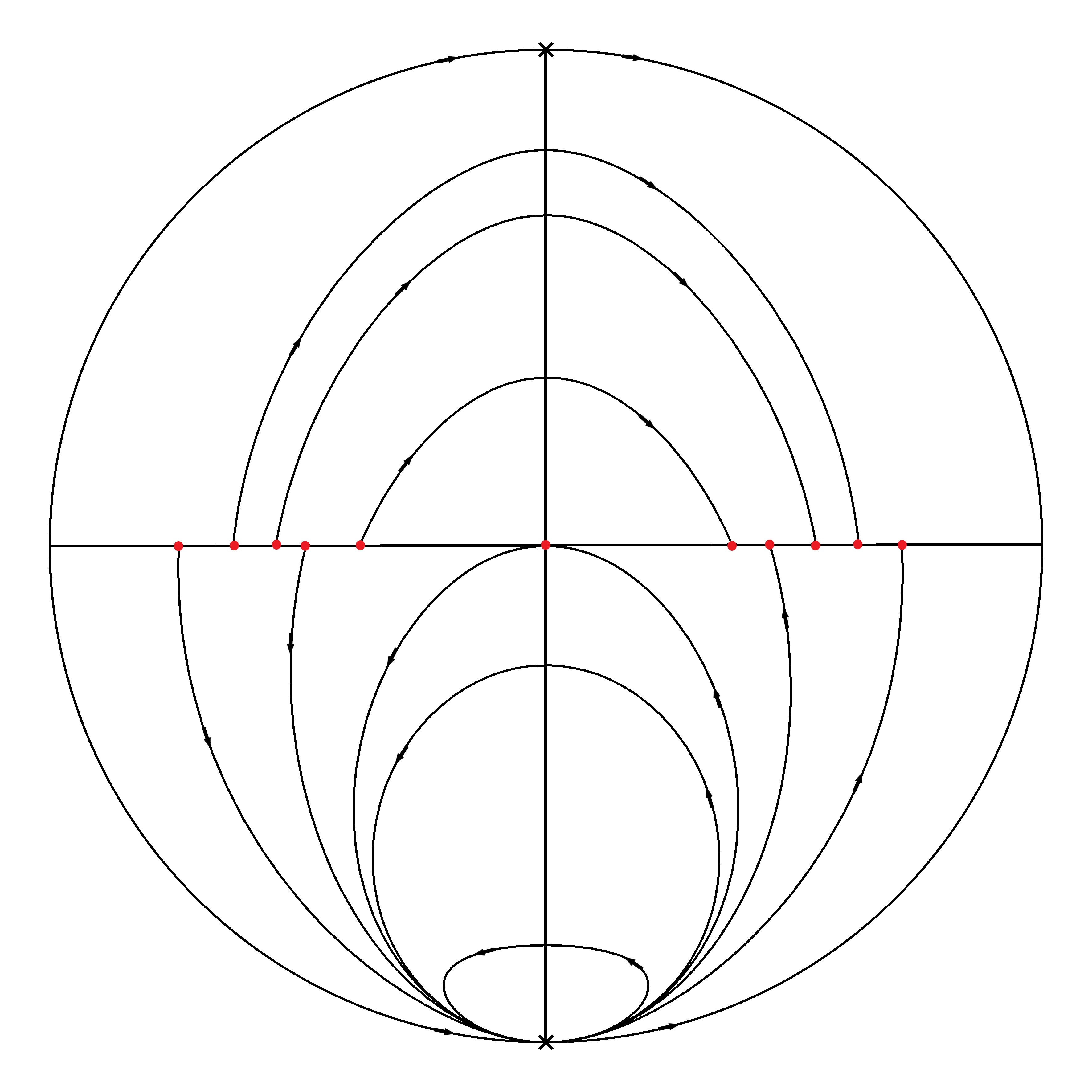}
		\caption{Family III}
		\label{fig:ifigura3}
\end{figure}

\begin{figure}[htb]
	\centering

		\includegraphics[width=50mm]{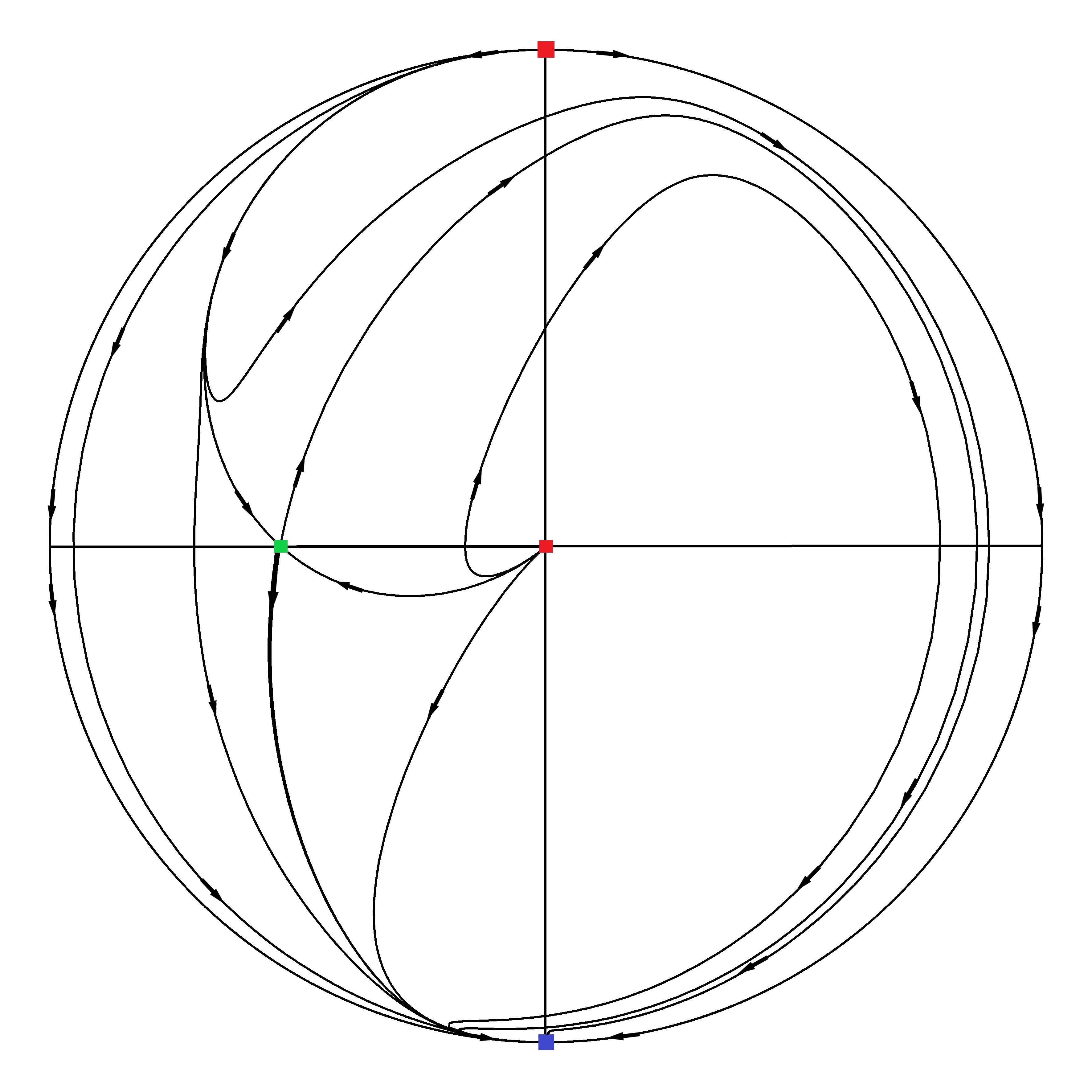}
		\caption{Family IV}
		\label{fig:ifigura4}

	\centering
		\includegraphics[width=50mm]{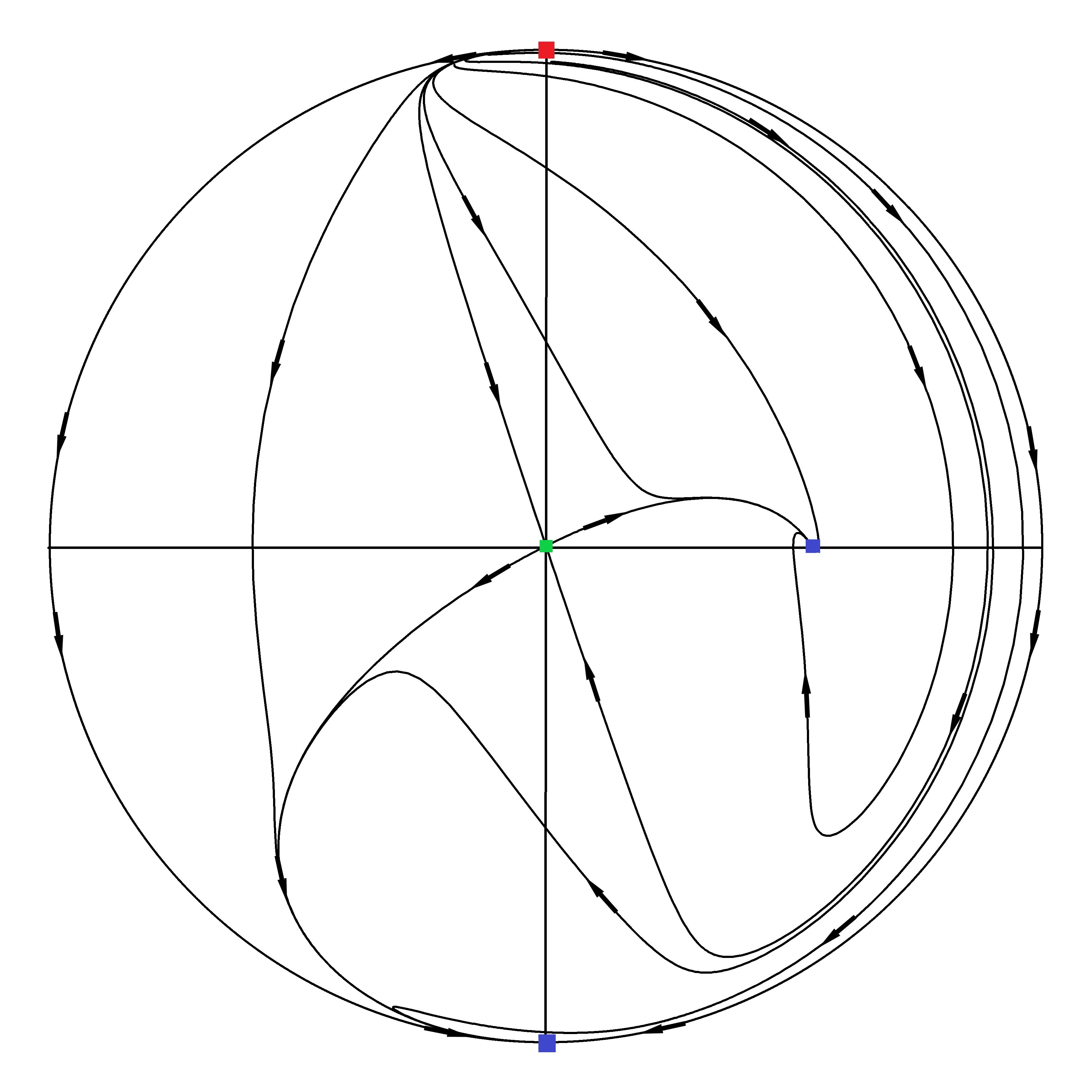}
		\caption{Family V when $b<0$.}
		\label{fig:ifigura5}
\end{figure}

\begin{figure}[htb!]
	\centering

		\includegraphics[width=50mm]{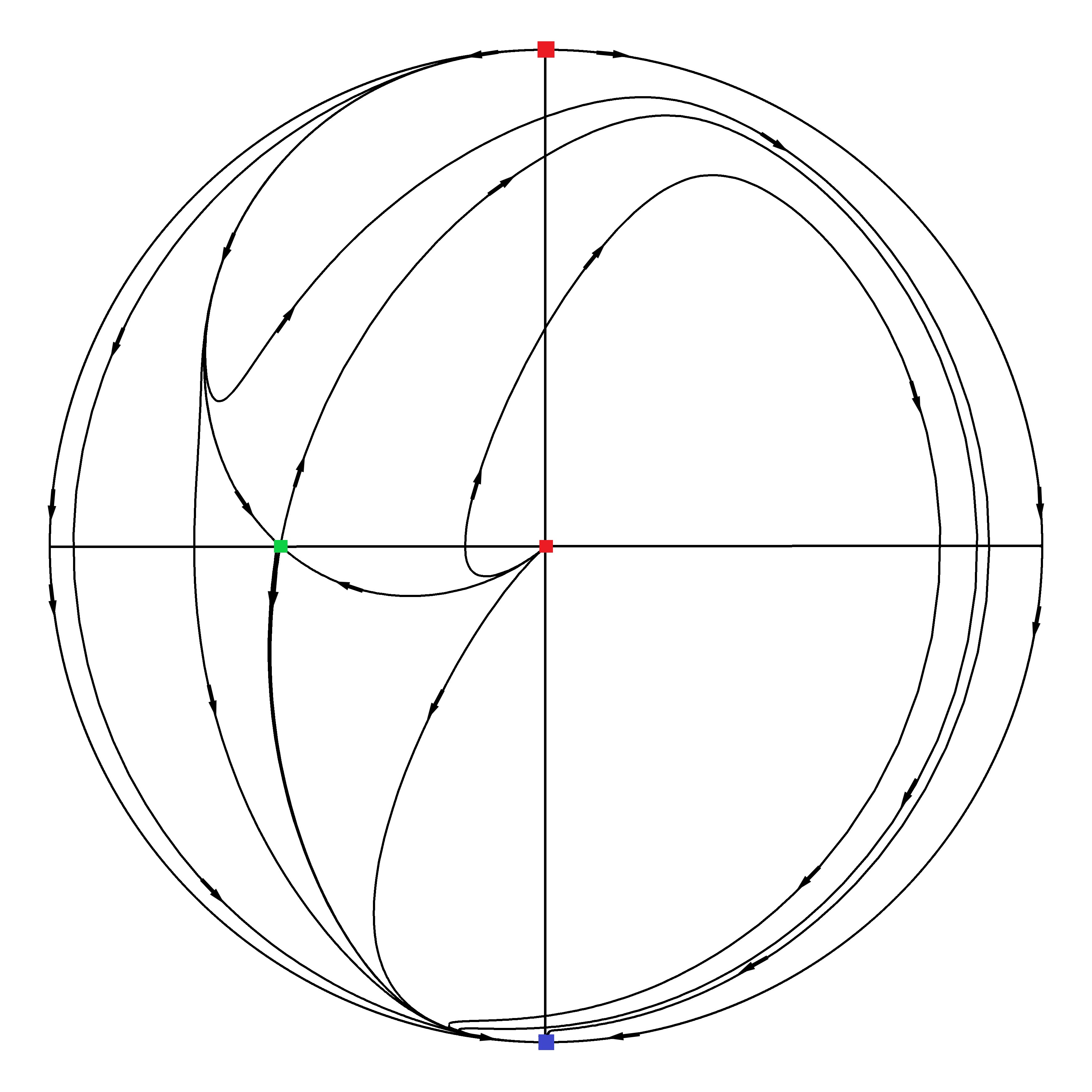}
		\caption{Family V when $b>0$.}
		\label{fig:ifigura6}

\end{figure}

\newpage
\vspace{1cm}
\section{Algebraic Aspects}
In this section we analyze the families I, II, III, IV and V through an algebraic point of view. We compute the solutions in terms of P-Weierstrass function of families I, II ($p=-4$) and V ($s=-4$), as well the differential Galois group of their variational equations.
\subsection{Family I}
\begin{teor}\label{thprim1}
Consider the family I, The following statements hold.
\begin{enumerate}
\item The dynamical system is hamiltonian with one degree of freedom and with polynomial first integral $$H=H(x,y)=\frac{y^2}{2}+\frac{c}{3}x^3$$
\item The integral curve of the Hamiltomian vector field is $$(-\frac{6}{c}\wp(t+k_0;0,-2H),-\frac{6}{c}\dot\wp(t+k_0;0,-2H)).$$
\item The Differential Galois Group associated to the foliation is isomorphic to $\mathbb{Z}_2$.
\item The connected identity component of the Differential Galois Group of the first variational equation along any particular solution is an abelian group.
\end{enumerate}
\end{teor}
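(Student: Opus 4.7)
The plan is to establish the four items in order, exploiting the Hamiltonian structure that makes Family~I essentially a one-parameter problem. For (1), I would simply verify Hamilton's equations against $H=y^{2}/2+cx^{3}/3$: since $\partial H/\partial y = y = \dot x$ and $-\partial H/\partial x = -cx^{2} = \dot y$, the polynomial $H$ is a first integral and Family~I is Hamiltonian with one degree of freedom. For (2), I would restrict to a level set $H=h$, so that $y^{2}=2h-\tfrac{2c}{3}x^{3}$, which together with $\dot x = y$ yields the autonomous first-order ODE $\dot x^{2}=2h-\tfrac{2c}{3}x^{3}$. Then I would plug in the Weierstrass ansatz $x(t)=\alpha\,\wp(t+k_{0};g_{2},g_{3})$ and use the defining identity $\wp'^{2}=4\wp^{3}-g_{2}\wp-g_{3}$, matching powers of $\wp$ on both sides: the cubic coefficient fixes $\alpha=-6/c$, the linear coefficient forces $g_{2}=0$, and the constant term determines $g_{3}$ in terms of $h$ (hence of $H$). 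The $y$-component of the integral curve then follows by differentiation and the relation $y=\dot x$.

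For (3), I would identify the Picard--Vessiot extension attached to the foliation $dH=0$. The level-set equation $y^{2}=2h-\tfrac{2c}{3}x^{3}$ realises the solution field as the quadratic algebraic extension $\mathbb{C}(x)\bigl(\sqrt{2h-(2c/3)x^{3}}\bigr)$ of $\mathbb{C}(x)$, because $y$ is obtained as a single square root of a rational function of $x$. Since the extension is algebraic of degree two and preserves the field of constants, the associated differential Galois group coincides with the usual Galois group of this quadratic extension, which acts by $y\mapsto\pm y$; hence it is isomorphic to $\mathbb{Z}_{2}$.

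For (4), rather than attempting to analyse directly the variational equation $\ddot\xi=-2c\,x(t)\,\xi$, a Lam\'e-type equation whose coefficients live in the Weierstrass field obtained in (2), I would invoke the Morales--Ramis theorem stated in the preliminaries. By part~(1) the system is Hamiltonian with one degree of freedom, hence automatically integrable in the Liouville sense with $H$ itself as the commuting first integral. Morales--Ramis then forces the connected identity component of the differential Galois group of the first variational equation along any particular solution to be abelian, proving (4) without further work. The only genuinely non-trivial computation is the coefficient matching in (2); items (1), (3) and (4) reduce, respectively, to a direct verification, the description of a quadratic Picard--Vessiot extension, and a direct appeal to Morales--Ramis.
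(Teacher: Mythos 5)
Your proposal is correct and follows essentially the same route as the paper: item (1) by direct verification of Hamilton's equations for $H=\tfrac{y^{2}}{2}+\tfrac{c}{3}x^{3}$, item (2) by reducing to $\dot x^{2}=2h-\tfrac{2c}{3}x^{3}$ and normalizing to the Weierstrass cubic (your ansatz-and-match is the same computation as the paper's change of variable and likewise fixes $\alpha=-6/c$ and $g_{2}=0$), item (3) by exhibiting the Picard--Vessiot extension as the quadratic extension of $\mathbb{C}(x)$ generated by $y=\pm\sqrt{2h-\tfrac{2c}{3}x^{3}}$ with Galois group $\mathbb{Z}_{2}$, and item (4) by a direct appeal to Morales--Ramis via the Liouville integrability of a one-degree-of-freedom Hamiltonian system. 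No gaps; if anything, carrying out your coefficient matching explicitly for the constant term is a slightly more careful way to pin down $g_{3}$ than the paper's brief change-of-variable remark.
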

\begin{proof}
    We proceed according to each item,
    \begin{enumerate}
        \item The polynomial vector field related with family I is equivalent to Equation \eqref{lmp1} being $f(x)=-cx^2$. In virtue of Equation \eqref{lmp2} we have the Hamiltonian $H=\frac{y^2}{2}+\frac{c}{3}x^3$.
        \item Due to $y=\dot x$, we obtain $y^2=-\frac{2c}{3}^2+H$. Through the change of variable $(x,y)\mapsto(\sqrt[3]{\frac{-6}{c}}x,\sqrt[3]{\frac{-6}{c}}y)$, we arrive to the elliptic curve given in Equation \eqref{pwei0} with invariants $g_2=0$ and $g_3=-2H$. Thus, the integral curve of the Hamiltonian system is $(x,\dot x)$, being $x$ given by $-\frac{6}{c}\wp(t+k_0;0,-2H)$.
        \item The foliation associated to the vector field of Family I is $$y'=-\frac{cx^2}{y},\quad ':=\frac{d}{dx}.$$ Setting $z=\frac{y^2}{2}$, we obtain $z'=-cx^2$ and therefore $z=-\frac{c}{3}x^3$. Due to the differential field $K$ is the field of rational functions $\mathbb{C}(x)$, $\sigma(z)=z$ and $\sigma(y)=\lambda \sqrt{z}$, where $\lambda^2=1$. Thus, the Picard-Vessiot extension $L$ is a quadratic extension of $K$ and we can conclude that $DGal(L/K)$ has two elements.
        \item Let $(x_0(t),\dot x_0(t))$ be a particular solution of the polynomial vector field related with Family I. Thus, the first variational equation  is $$\frac{d}{dt}\begin{pmatrix}\xi_1\\\xi_2
        \end{pmatrix}=\begin{pmatrix}0&1\\-2cx_0(t)&0\end{pmatrix}\begin{pmatrix}\xi_1\\\xi_2
        \end{pmatrix},$$ which is equivalent to $\ddot \xi=-2cx_0(t)\xi$, being $\xi=\xi_1$. By Morales-Ramis theory, due to the dynamical system is polynomially integrable, the differential Galois group of this first variational equation is abelian.
    \end{enumerate}
\end{proof}

\subsection{Family II}
\begin{teor}\label{thprim2}
Consider the family II, The following statements hold.
\begin{enumerate}
\item The first integral of the polynomial vector field is $$I=I(x,y)=y-bx^2$$
\item The integral curve of the polynomial vector field is $(x(t),\dot x(t))$, where  $$x(t)=\sqrt{\frac{k_1}{b}}\tan(\sqrt{k_1b(k_2+t)}).$$
\item The Differential Galois Group associated to the foliation is isomorphic to the identity group.
\item The connected identity component of the Differential Galois Group of the first variational equation around any particular solution is an abelian group.
\end{enumerate}
\end{teor}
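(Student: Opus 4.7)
The plan is to handle the four items in order, since each one builds naturally on the previous. For (1), I would simply verify that $I=y-bx^2$ is conserved by computing $\dot I=\dot y-2bx\dot x=2bxy-2b xy=0$ along any solution of Family II. For (2), fix the level $I=k_1$ so that $y=bx^2+k_1$, and substitute into $\dot x=y$ to obtain the separable ODE $\dot x=bx^2+k_1$. Separating variables gives $\int\frac{dx}{bx^2+k_1}=t+k_2$; the substitution $x=\sqrt{k_1/b}\,u$ reduces the integrand to $\arctan$, and inverting yields the stated $x(t)=\sqrt{k_1/b}\tan(\sqrt{k_1b}(t+k_2))$, with $y(t)=\dot x(t)$.

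For (3), observe that the foliation associated to the vector field is $y'=\frac{2bxy}{y}=2bx$, where $':=\frac{d}{dx}$. This integrates at once to $y=bx^2+k_1$, an element of the base field $K=\mathbb{C}(x)$. Hence the Picard-Vessiot extension is $L=K$, and every $K$-differential automorphism of $L$ is the identity, so $DGal(L/K)\cong\{e\}$.

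For (4) I would use the first integral to tame the variational equation. Along a particular solution $(x_0(t),y_0(t))$ with $y_0=bx_0^2+k_1$, the first variational equation reads
\begin{equation*}
\frac{d}{dt}\begin{pmatrix}\xi_1\\\xi_2\end{pmatrix}=\begin{pmatrix}0&1\\ 2by_0(t)&2bx_0(t)\end{pmatrix}\begin{pmatrix}\xi_1\\\xi_2\end{pmatrix}.
\end{equation*}
The conservation of $I$ propagates to a linear invariant of this system: setting $\eta:=-2bx_0\xi_1+\xi_2$, a direct computation using $\dot x_0=y_0$ gives $\dot\eta=0$. Changing basis from $(\xi_1,\xi_2)$ to $(\xi_1,\eta)$ puts the variational equation in upper-triangular form
\begin{equation*}
\frac{d}{dt}\begin{pmatrix}\xi_1\\\eta\end{pmatrix}=\begin{pmatrix}2bx_0(t)&1\\0&0\end{pmatrix}\begin{pmatrix}\xi_1\\\eta\end{pmatrix}.
\end{equation*}

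A fundamental matrix is $\Phi=\begin{pmatrix}y_0& y_0\int y_0^{-1}dt\\0&1\end{pmatrix}$, since $\dot y_0=2bx_0y_0$ solves the homogeneous part and variation of parameters handles the inhomogeneous column. Because $y_0\in K$, the Picard-Vessiot extension $L/K$ is generated by the single quadrature $\int y_0^{-1}dt$, and any $\sigma\in DGal(L/K)$ acts by $\sigma(\int y_0^{-1}dt)=\int y_0^{-1}dt+c_\sigma$ with $c_\sigma\in\mathbb{C}$. Thus $DGal(L/K)$ embeds into the additive group $\mathbb{G}_a$, which is abelian, and in particular its identity component is abelian. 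The main obstacle is not a hard calculation but a conceptual one: being precise about the base differential field $K$ (generated over $\mathbb{C}$ by $x_0(t)$, inside which $y_0$ already lies) and verifying that the reduction via $\eta$ does produce a genuine triangular splitting rather than merely a partial reduction; once that is in place, the abelian conclusion is immediate from the shape of $\Phi$.
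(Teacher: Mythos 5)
Your items (1)--(3) follow the paper's proof essentially verbatim: differentiate $I$ along the flow, separate variables on the level set $y=bx^2+k_1$, and observe that the foliation $y'=2bx$ already integrates inside $K=\mathbb{C}(x)$ so that $L=K$ and the Galois group is trivial. (Incidentally, your placement of $(t+k_2)$ outside the square root in item (2) is the correct reading; the paper's displayed formula has a typographical slip there.)

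Item (4) is where you genuinely diverge, and your route is the stronger one. The paper writes down the variational equation (with a sign error in the lower-left entry, $-2by_0$ where the Jacobian of $(y,2bxy)$ gives $+2by_0$, as you have it) and then simply asserts the abelian conclusion ``by Morales-Ramis theory'' because the first integral is polynomial. That appeal is loose: the Morales-Ramis theorem as quoted in the paper's preliminaries concerns Hamiltonian systems, and Family II is not presented in Hamiltonian form, so strictly one would need the Ayoul-Zung extension to meromorphically integrable vector fields to make the paper's one-line argument airtight. You instead prove the claim directly: the linear invariant $\eta=-2bx_0\xi_1+\xi_2$ (the differential of $I$ along the solution) triangularizes the variational system, $y_0$ solves the homogeneous part since $\dot y_0=2bx_0y_0$, and the Picard-Vessiot extension is generated by the single quadrature $\int y_0^{-1}\,dt$, so the Galois group embeds in $\mathbb{G}_a$ and is abelian. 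Your computation checks out ($\dot\eta=-2by_0\xi_1-2bx_0\xi_2+2by_0\xi_1+2bx_0\xi_2=0$), and it buys an explicit identification of the group rather than only the abelianness of its identity component; the paper's approach is shorter but rests on a theorem whose hypotheses it does not verify.
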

\begin{proof}
    We proceed according to each item,
    \begin{enumerate}
        \item The total derivative of $I(x,y)$ vanishes, i.e., $\dot I=0$, therefore $I$ is a first integral of the vector field related to family II.
        \item Due to $y=\dot x$, we obtain $\ddot x=b\dot z$, where $z=x^2$. Thus, $\dot{x}=bx^2+k_1$, which implies that $$\int\frac{dx}{bx^2+k_1}=t+k_2$$ and then $x(t)=\sqrt{\frac{k_1}{b}}\tan(\sqrt{k_1b(k_2+t)}).$.
        \item The foliation associated to the vector field of Family II is $$y'=2bx,\quad ':=\frac{d}{dx}.$$ Then the solution of this foliation is $$y(x)=bx^2+k_1$$. Then we can conclude that $DGal(L/K)$ has one element, i.e., $DGal(L/K)= I_2$.
        \item Let $(x_0(t),\dot x_0(t))$ be a particular solution of the polynomial vector field related with Family II. Thus, the first variational equation  is $$\frac{d}{dt}\begin{pmatrix}\xi_1\\\xi_2
        \end{pmatrix}=\begin{pmatrix}0&1\\-2by_0(t)&2bx_0(t)\end{pmatrix}\begin{pmatrix}\xi_1\\\xi_2
        \end{pmatrix},$$ which is equivalent to $$\ddot \xi-2bx_0(t)\dot\xi-2by_0(t)\xi=0,\,\xi=\xi_1.$$ Due to the first integral is of polynomial type, by Morales-Ramis theory we can conclude that the connected identity component of the differential Galois group of the first variational equation along any particular solution is an abelian group.
    \end{enumerate}
\end{proof}
\subsection{Family III}
\begin{teor}\label{thprim3}
Consider the family III, The following statements hold.
\begin{enumerate}
\item The first integral of the polynomial vector field is $$I=I(x,y)=y-ax^2$$
\item The integral curve of the polynomial vector field is $(x(t),\dot x(t))$, where  $$x(t)=\sqrt{\frac{k_1}{a}}\tan(\sqrt{k_1a(k_2+t)}).$$
\item The Differential Galois Group associated to the foliation is isomorphic to the identity group.
\item The connected identity component of the Differential Galois Group of the first variational equation around any particular solution is an abelian group.
\end{enumerate}
\end{teor}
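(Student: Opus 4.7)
The plan is to mirror the argument of Theorem \ref{thprim2}, since Family III differs from Family II only by the replacement of the parameter $b$ with $a$ in the second equation $\dot y = 2ayx$. All four items should therefore be provable by essentially the same four-step template, and I would carry them out in the stated order.

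For item (1), I would differentiate $I(x,y)=y-ax^2$ along the flow and verify directly that
\[
\dot I = \dot y - 2ax\dot x = 2axy - 2axy = 0,
\]
so $I$ is a first integral. For item (2), I would use $y=\dot x$ to rewrite the second equation as $\ddot x = 2ax\dot x = a\tfrac{d}{dt}(x^2)$, integrate once to obtain $\dot x = ax^2+k_1$, and then separate variables. The standard integral $\int \frac{dx}{ax^2+k_1}=t+k_2$ yields an $\arctan$ expression which, when inverted, gives the claimed closed form $x(t)=\sqrt{k_1/a}\,\tan(\sqrt{k_1 a}\,(t+k_2))$; the integral curve is $(x(t),\dot x(t))$.

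For item (3), I would eliminate $t$ to obtain the foliation $y' = 2ax$ with $'=d/dx$, which integrates elementarily to $y(x)=ax^2+k_1\in\mathbb{C}(x)$. Since the solution already lies in the base field $K=\mathbb{C}(x)$, the Picard–Vessiot extension satisfies $L=K$ and hence $DGal(L/K)$ is the identity group, as in the analogous step of Theorem \ref{thprim2}. For item (4), I would fix a particular solution $(x_0(t),\dot x_0(t))$ and write the first variational equation using the Jacobian of the vector field evaluated at this solution, obtaining a linear system
\[
\frac{d}{dt}\begin{pmatrix}\xi_1\\\xi_2\end{pmatrix}
=\begin{pmatrix}0 & 1\\ 2ay_0(t) & 2ax_0(t)\end{pmatrix}\begin{pmatrix}\xi_1\\\xi_2\end{pmatrix}.
\]
Because the full system possesses the polynomial first integral $I=y-ax^2$, Morales–Ramis theory then forces the connected identity component of the differential Galois group of this variational equation to be abelian.

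The main (and essentially only) obstacle is bookkeeping rather than conceptual: one must take care with signs and constants of integration when separating variables in item (2), and in item (4) one should be explicit that the existence of a rational/polynomial first integral is precisely the hypothesis used to invoke the Morales–Ramis conclusion about the abelian identity component. Because Family III is structurally identical to Family II under $b\leftrightarrow a$, no new technical ingredient beyond what appears in the proof of Theorem \ref{thprim2} is needed.
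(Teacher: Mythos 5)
Your proposal is correct and follows essentially the same four-step argument as the paper's own proof (direct verification that $\dot I=0$, reduction to $\dot x=ax^2+k_1$ and separation of variables, elementary integration of the foliation $y'=2ax$ giving a trivial Picard--Vessiot extension, and Morales--Ramis applied to the variational equation). Indeed your Jacobian entry $2ay_0(t)$ is the correct linearization of $\dot y=2ayx$, whereas the paper writes $-2ay_0(t)$; this sign does not affect the conclusion.
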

\begin{proof}
    We proceed according to each item,
    \begin{enumerate}
        \item The total derivative of $I(x,y)$ vanishes, i.e., $\dot I=0$, therefore $I$ is a first integral of the vector field related to family III.
        \item Due to $y=\dot x$, we obtain $\ddot x=a\dot z$, where $z=x^2$. Thus, $\dot{x}=ax^2+k_1$, which implies that $$\int\frac{dx}{ax^2+k_1}=t+k_2$$ and then $x(t)=\sqrt{\frac{k_1}{a}}\tan(\sqrt{ak_1(k_2+t)}).$.
        \item The foliation associated to the vector field of Family II is $$y'=2ax,\quad ':=\frac{d}{dx}.$$ Then the solution of this foliation is $$y(x)=ax^2+k_1$$. Then we can conclude that $DGal(L/K)$ has one element, i.e., $DGal(L/K)= I_2$.
        \item Let $(x_0(t),\dot x_0(t))$ be a particular solution of the polynomial vector field related with Family III. Thus, the first variational equation  is $$\frac{d}{dt}\begin{pmatrix}\xi_1\\\xi_2
        \end{pmatrix}=\begin{pmatrix}0&1\\-2ay_0(t)&2ax_0(t)\end{pmatrix}\begin{pmatrix}\xi_1\\\xi_2
        \end{pmatrix},$$ which is equivalent to $$\ddot \xi-2ax_0(t)\dot\xi-2ay_0(t)\xi=0,\,\xi=\xi_1.$$ Due to the first integral is of polynomial type, by Morales-Ramis theory we can conclude that the connected identity component of the differential Galois group of the first variational equation along any particular solution is an abelian group.
    \end{enumerate}
\end{proof}
\subsection{Family IV}
\begin{teor}\label{thprim4}
Consider the family IV, being $p=-4$. The following statements hold.
\begin{enumerate}
\item The dynamical system is hamiltonian with one degree of freedom and with polynomial first integral $$H=H(x,y)=\frac{y^2}{2}+\frac{c}{3}x^3+\frac{3}{4}a^2x^2$$
\item The integral curve of the Hamiltomian vector field is given in terms of P-function. 
\item The Differential Galois Group associated to the foliation is isomorphic to $\mathbb{Z}_2$.
\item The connected identity component of the Differential Galois Group of the first variational equation along any particular solution is an abelian group.
\end{enumerate}
\end{teor}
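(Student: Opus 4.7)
The plan is to mirror the strategy used in Theorem \ref{thprim1}, exploiting the fact that setting $p=-4$ kills the damping term $a\frac{p+4}{2}y$ in Family IV, leaving $\ddot x = -\tfrac{3}{2}a^2 x - cx^2$, which is precisely of the form \eqref{lmp1} with $f(x)=-\tfrac{3}{2}a^2 x - cx^2$. Applying \eqref{lmp2} directly yields the claimed Hamiltonian $H(x,y)=\tfrac{y^2}{2}+\tfrac{c}{3}x^3+\tfrac{3}{4}a^2x^2$, proving item (1); verifying $\dot H=0$ along trajectories is a one-line computation.

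For item (2), the energy relation gives $y^2 = 2H - \tfrac{3}{2}a^2 x^2 - \tfrac{2c}{3}x^3$, which defines an elliptic curve but with a non-vanishing quadratic term in $x$, so we cannot jump directly to \eqref{pwei0} as in Family I. The plan is to perform an affine change of variable $x = \alpha \tilde x + \beta$ chosen so that (i) the coefficient of $\tilde x^3$ becomes $4$, forcing $\alpha^3 = -6/c$, and (ii) the coefficient of $\tilde x^2$ vanishes, forcing $\beta = -\tfrac{3a^2}{4c}$. After this substitution the curve takes the Weierstrass normal form $y^2 = 4\tilde x^3 - g_2 \tilde x - g_3$ for explicit invariants $g_2, g_3$ depending on $a,c,H$, and the integral curve is $(x(t),\dot x(t))$ with $x(t) = \alpha \wp(t+k_0;g_2,g_3)+\beta$. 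This translation–rescaling step is the main obstacle, because one must track the resulting $g_2, g_3$ carefully; unlike Family I, here $g_2 \neq 0$ in general.

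For item (3), the foliation is $y' = (-\tfrac{3}{2}a^2 x - cx^2)/y$. Setting $z = y^2/2$ gives $z' = -\tfrac{3}{2}a^2 x - cx^2$, hence $z = -\tfrac{3}{4}a^2 x^2 - \tfrac{c}{3}x^3 + C$, a rational function of $x$. Therefore $y = \lambda \sqrt{z}$ with $\lambda^2 = 1$, so the Picard–Vessiot extension $L/K=\mathbb{C}(x)$ is a quadratic extension and $DGal(L/K)\cong \mathbb{Z}_2$, exactly as in Theorem \ref{thprim1}(3).

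For item (4), having established in item (1) that the system is Hamiltonian with one degree of freedom and a polynomial first integral, it is Liouville-integrable. The first variational equation along a particular solution $(x_0(t),\dot x_0(t))$ is the linear system with matrix $\begin{pmatrix}0&1\\-\tfrac{3}{2}a^2-2cx_0(t)&0\end{pmatrix}$, equivalent to $\ddot\xi = -(\tfrac{3}{2}a^2+2cx_0(t))\xi$. By the Morales–Ramis theorem, integrability forces the connected identity component of its differential Galois group to be abelian. No further computation is required beyond citing the theorem as done in the previous cases.
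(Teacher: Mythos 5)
Your proposal follows the paper's proof essentially verbatim: the same identification $f(x)=-cx^2-\tfrac{3}{2}a^2x$ with \eqref{lmp1}--\eqref{lmp2} for the Hamiltonian, the same reduction $z=y^2/2$ producing a quadratic Picard--Vessiot extension and hence $DGal(L/K)\cong\mathbb{Z}_2$, and the same appeal to Morales--Ramis for item (4). The only difference is that in item (2) you spell out the affine substitution (with $\alpha^3=-6/c$ and $\beta=-3a^2/(4c)$) that the paper dismisses as ``a suitable change of variable,'' which is a harmless and indeed welcome elaboration of the identical step.
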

\begin{proof}
    We proceed according to each item,
    \begin{enumerate}
        \item The polynomial vector field related with family IV is equivalent to Equation \eqref{lmp1} being $f(x)=-cx^2-\frac{3}{2}a^2x$. In virtue of Equation \eqref{lmp2} we have the Hamiltonian $H=\frac{y^2}{2}+\frac{c}{3}x^3+\frac{3}{4}a^2x^2$.
        \item Due to $y=\dot x$, we obtain $y^2=-\frac{2c}{3}^2-\frac{3}{2}a^2x+2H$. Because previous expression is a cubic polynomial in $x$, we can do a suitable change of variable to arrive to the elliptic curve given in Equation \eqref{pwei0} with invariants $g_2$ and $g_3$. Thus, the integral curve of the Hamiltonian system is written in terms of P-function. 
        \item The foliation associated to the vector field of Family IV is $$y'=-\frac{cx^2-\frac{3}{2}a^2x}{y},\quad ':=\frac{d}{dx}.$$ Setting $z=\frac{y^2}{2}$, we obtain $z'=-cx^2-\frac{3}{2}a^2x$ and therefore $z=-\frac{c}{3}x^3-\frac{3}{4}a^2x^2$. Due to the differential field $K$ is the field of rational functions $\mathbb{C}(x)$, $\sigma(z)=z$ and $\sigma(y)=\lambda \sqrt{z}$, where $\lambda^2=1$. Thus, the Picard-Vessiot extension $L$ is a quadratic extension of $K$ and we can conclude that $DGal(L/K)$ has two elements.
        \item Let $(x_0(t),\dot x_0(t))$ be a particular solution of the polynomial vector field related with Family IV. Thus, the first variational equation  is $$\frac{d}{dt}\begin{pmatrix}\xi_1\\\xi_2
        \end{pmatrix}=\begin{pmatrix}0&1\\-\frac{3}{2}a^2-2cx_0(t)&0\end{pmatrix}\begin{pmatrix}\xi_1\\\xi_2
        \end{pmatrix},$$ which is equivalent to $\ddot \xi=(-\frac{3}{2}a^2-2cx_0(t))\xi$, being $\xi=\xi_1$. By Morales-Ramis theory, due to the dynamical system is polynomially integrable, the differential Galois group of this first variational equation is abelian.
    \end{enumerate}
\end{proof}
\subsection{Family V}
\begin{teor}\label{thprim5}
Consider the family V, being $s=-4$. The following statements hold.
\begin{enumerate}
\item The dynamical system is hamiltonian with one degree of freedom and with polynomial first integral $$H=H(x,y)=\frac{y^2}{2}+\frac{c}{3}x^3+\frac{3}{4}bx^2$$
\item The integral curve of the Hamiltomian vector field is given in terms of P-function. 
\item The Differential Galois Group associated to the foliation is isomorphic to $\mathbb{Z}_2$.
\item The connected identity component of the Differential Galois Group of the first variational equation along any particular solution is an abelian group.
\end{enumerate}
\end{teor}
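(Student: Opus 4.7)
The plan is to mirror the proof of Theorem \ref{thprim4} for Family IV, replacing $\frac{3}{2}a^2$ by $b$ at each step, since setting $s=-4$ in Family V kills the coefficient $b\left(\frac{s+4}{2}\right)$ of $y$ and leaves exactly the second-order equation $\ddot x=-cx^2-\frac{3}{2}bx$. First I would observe that this matches Equation \eqref{lmp1} with $f(x)=-cx^2-\frac{3}{2}bx$, so by Equation \eqref{lmp2} the Hamiltonian is immediately $H(x,y)=\frac{y^2}{2}-\int_{0}^{x}f(\tau)\,d\tau=\frac{y^2}{2}+\frac{c}{3}x^3+\frac{3}{4}bx^2$, and a direct check of $\dot H=y\dot y+(cx^2+\frac{3}{2}bx)\dot x$ verifies $\dot H=0$, establishing item (1).

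For item (2), from $y=\dot x$ and conservation of $H$ one gets $\dot x^{2}=2H-\frac{2c}{3}x^{3}-\frac{3}{2}bx^{2}$, a cubic polynomial in $x$. The next step is a standard two-stage normalization: an affine change of variable $x\mapsto \alpha x+\beta$ chosen to remove the quadratic term and to match the coefficient $4$ of $x^3$ in Equation \eqref{pwei0}; the resulting elliptic curve has invariants $g_2,g_3$ expressed as polynomials in $b,c,H$. Then $x(t)$ is recovered from the $\wp$-function of these invariants, and the integral curve is $(x(t),\dot x(t))$. The only mildly delicate point here is keeping track of the scalars in the substitution, but it is purely computational.

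For item (3), the foliation reads $y'=-(cx^2+\frac{3}{2}bx)/y$. Setting $z=\frac{y^2}{2}$ gives $z'=-cx^2-\frac{3}{2}bx$, hence $z=-\frac{c}{3}x^3-\frac{3}{4}bx^2\in K:=\mathbb{C}(x)$. Since $y=\pm\sqrt{2z}$, any differential automorphism $\sigma$ fixing $K$ satisfies $\sigma(z)=z$ and $\sigma(y)=\lambda y$ with $\lambda^{2}=1$. The radicand is a non-square in $K$ (it is $x^{2}$ times the non-square $-\frac{c}{3}x-\frac{3}{4}b$), so the Picard--Vessiot extension $L/K$ is a proper quadratic extension and $DGal(L/K)\cong \mathbb{Z}_2$.

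Finally, for item (4), linearizing around a particular solution $(x_0(t),\dot x_0(t))$ produces the first variational equation
\begin{equation*}
\frac{d}{dt}\begin{pmatrix}\xi_1\\\xi_2\end{pmatrix}=\begin{pmatrix}0 & 1\\ -\frac{3}{2}b-2cx_0(t) & 0\end{pmatrix}\begin{pmatrix}\xi_1\\\xi_2\end{pmatrix},
\end{equation*}
equivalent to $\ddot\xi=\bigl(-\frac{3}{2}b-2cx_0(t)\bigr)\xi$. Since by item (1) the Hamiltonian system has a polynomial first integral and is therefore Liouville-integrable with one degree of freedom, the Morales--Ramis theorem recalled in the Preliminaries forces the connected identity component of the differential Galois group of this variational equation to be abelian. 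The main obstacle in the whole argument is the explicit change of variables in item (2) that brings the cubic into Weierstrass form; all other steps are essentially copies of the Family IV argument with the substitution $\frac{3}{2}a^{2}\leadsto b$.
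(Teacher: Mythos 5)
Your proposal is correct and follows essentially the same route as the paper's own proof: identify $f(x)=-cx^2-\tfrac{3}{2}bx$ to obtain the Hamiltonian via \eqref{lmp1}--\eqref{lmp2}, reduce the cubic energy relation to the Weierstrass normal form \eqref{pwei0} for item (2), pass to $z=\tfrac{y^2}{2}$ to exhibit the Picard--Vessiot extension as a quadratic extension of $\mathbb{C}(x)$ for item (3), and invoke Morales--Ramis on the first variational equation for item (4). Your added remarks (the explicit check $\dot H=0$ and the observation that the radicand is a non-square in $\mathbb{C}(x)$, so the extension is genuinely quadratic) only make the argument slightly more complete than the paper's.
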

\begin{proof}
    We proceed according to each item,
    \begin{enumerate}
        \item The polynomial vector field related with family V is equivalent to Equation \eqref{lmp1} being $f(x)=-cx^2-\frac{3}{2}bx$. In virtue of Equation \eqref{lmp2} we have the Hamiltonian $H=\frac{y^2}{2}+\frac{c}{3}x^3+\frac{3}{4}bx^2$.
        \item Due to $y=\dot x$, we obtain $y^2=-\frac{2c}{3}^2-\frac{3}{2}bx+2H$. Because previous expression is a cubic polynomial in $x$, we can do a suitable change of variable to arrive to the elliptic curve given in Equation \eqref{pwei0} with invariants $g_2$ and $g_3$. Thus, the integral curve of the Hamiltonian system is written in terms of P-function. 
        \item The foliation associated to the vector field of Family V is $$y'=-\frac{cx^2-\frac{3}{2}bx}{y},\quad ':=\frac{d}{dx}.$$ Setting $z=\frac{y^2}{2}$, we obtain $z'=-cx^2-\frac{3}{2}bx$ and therefore $z=-\frac{c}{3}x^3-\frac{3}{4}bx^2$. Due to the differential field $K$ is the field of rational functions $\mathbb{C}(x)$, $\sigma(z)=z$ and $\sigma(y)=\lambda \sqrt{z}$, where $\lambda^2=1$. Thus, the Picard-Vessiot extension $L$ is a quadratic extension of $K$ and we can conclude that $DGal(L/K)$ has two elements.
        \item Let $(x_0(t),\dot x_0(t))$ be a particular solution of the polynomial vector field related with Family V. Thus, the first variational equation  is $$\frac{d}{dt}\begin{pmatrix}\xi_1\\\xi_2
        \end{pmatrix}=\begin{pmatrix}0&1\\-\frac{3}{2}b-2cx_0(t)&0\end{pmatrix}\begin{pmatrix}\xi_1\\\xi_2
        \end{pmatrix},$$ which is equivalent to $\ddot \xi=(-\frac{3}{2}b-2cx_0(t))\xi$, being $\xi=\xi_1$. By Morales-Ramis theory, due to the dynamical system is polynomially integrable, the differential Galois group of this first variational equation is abelian.
    \end{enumerate}
\end{proof}
\section{Conclusion}
An in-depth analysis of the quadratic systems containing certain multiparametric families was carried out,   for this purpose they were identified and classified, with the aim of making more bearable the study on the stability of its critical points both in the finite and infinite plane. The existence of transcritical bifurcations in the given system was determined. 
Finally, a study was made on the hamiltonian cases and the differential Galois groups of their foliations and variational equations.

\end{document}